\def\biblio{\bibliography{duality}\bibliographystyle{alpha}}
\definecolor{dark-red}{rgb}{0.5,0.15,0.15}
\definecolor{dark-blue}{rgb}{0.15,0.15,0.6}
\definecolor{dark-green}{rgb}{0.15,0.6,0.15}
\newcommand{\cX}{\cal{X}}
\newcommand{\cW}{\cal{W}}
\renewcommand*{\backref}[1]{}
\renewcommand*{\backrefalt}[4]{%
  \ifcase #1 %
No citations.
  \or
(cit. on p. #2).%
  \else
(cit on pp. #2).%
  \fi%
}
\newtheorem{thm}{Theorem}[section]
\newtheorem{cor}[thm]{Corollary}
\newtheorem{prop}[thm]{Proposition}
\newtheorem{lem}[thm]{Lemma}
\theoremstyle{definition}
\newtheorem{defn}[thm]{Definition}
\newtheorem{ex}[thm]{Example}
\theoremstyle{remark}
\newtheorem{rem}[thm]{Remark}
\theoremstyle{theorem}
\newtheorem*{thm*}{Theorem}
\newtheorem*{thma}{Theorem A}
\newtheorem*{thmb}{Theorem B}
\newtheorem*{thmc}{Theorem C}
\newtheorem*{thmd}{Theorem D}
\newtheorem*{thme}{Theorem E}
\newtheorem*{quest*}{Question}
\let\c@equation\c@thm
\numberwithin{equation}{section}
\DeclareMathOperator{\Top}{Top}
\DeclareMathOperator{\Sp}{Sp}
\DeclareMathOperator{\Hom}{Hom}
\DeclareMathOperator{\End}{End}
\DeclareMathOperator{\cA}{\mathcal{A}}
\DeclareMathOperator{\cC}{\mathcal{C}}
\DeclareMathOperator{\cD}{\mathcal{D}}
\DeclareMathOperator{\cE}{\mathcal{E}}
\DeclareMathOperator{\cS}{\mathcal{S}}
\DeclareMathOperator{\cF}{\mathcal{F}}
\DeclareMathOperator{\cV}{\mathcal{V}}
\DeclareMathOperator{\cO}{\mathcal{O}}
\DeclareMathOperator{\Spec}{Spec}
\DeclareMathOperator{\Mod}{Mod}
\DeclareMathOperator{\StMod}{StMod}
\DeclareMathOperator{\Loc}{Loc}
\DeclareMathOperator{\Coloc}{Coloc}
\DeclareMathOperator{\Thick}{Thick}
\DeclareMathOperator{\Ind}{Ind}
\DeclareMathOperator{\res}{res}
\DeclareMathOperator{\im}{Im}
\DeclareMathOperator{\Res}{Res}
\DeclareMathOperator{\Coind}{Coind}
\DeclareMathOperator{\supp}{supp}
\DeclareMathOperator{\cosupp}{cosupp}
\newcommand{\cK}{\mathcal{K}}
\newcommand{\cal}{\mathcal}
\newcommand{\xr}{\xrightarrow}
\newcommand{\Z}{\mathbb{Z}}
\Crefname{figure}{Figure}{Figures}
\Crefname{assu}{Assumption}{Assumptions}
\Crefname{lem}{Lemma}{Lemmas}
\Crefname{thm}{Theorem}{Theorems}
\Crefname{thma}{Theorem}{Theorems}
\Crefname{prop}{Proposition}{Propositions}
\DeclareMathOperator{\Inj}{Inj}
\newcommand{\fp}{\mathfrak{p}}
\newcommand{\fq}{\mathfrak{q}}
\newcommand{\fr}{\mathfrak{r}}
\newcommand{\fS}{\mathfrak{S}}
\newcommand{\recollement}[5]{
\xymatrix{{#1} \ar[r]|-{#2} & #3 \ar[r]|-{#4} \ar@<1ex>[l]^-{{#2}_!} \ar@<-1ex>[l]_-{{#2}^*} & #5, \ar@<1ex>[l]^-{{#4}!} \ar@<-1ex>[l]_-{{#4}^*}
}}
\let\lim\relax
\DeclareMathOperator{\lim}{lim}
\newcommand{\mm}{/\!\!/}
\DeclareMathOperator{\op}{op}
\newcommand{\cQ}{\mathcal{Q}}
\newcommand{\cR}{\mathcal{R}}
\newcommand{\cZ}{\mathcal{Z}}
\newcommand{\cU}{\mathcal{U}}
\newcommand{\F}{\mathbb{F}}
\newcommand{\pc}[1]{{#1}_{p}^{\wedge}}
\DeclareMathOperator{\map}{map}
\title{On stratification for spaces with Noetherian mod $p$ cohomology}
\author{Tobias Barthel}
\address{Max-Planck-Institut f\"ur Mathematik, Vivatsgasse 7, 53111 Bonn, Germany}
\email{tbarthel@mpim-bonn.mpg.de}
\author{Nat{\`a}lia Castellana}
\address{Departament de Matem\`atiques, Universitat Aut\`onoma de Barcelona, 08193 Cerdanyola del Vall\'es (Barcelona), Spain and Centre de Recerca Matem\`atica, Edifici Cc, Campus de Bellaterra, 08193 Cerdanyola del Vall\'es (Barcelona), Spain}
\email{natalia@mat.uab.cat}
\author{Drew Heard}
\address{Department of Mathematical Sciences, Norwegian University of Science and Technology, Trondheim}
\email{drew.k.heard@ntnu.no}
\author{Gabriel Valenzuela}
\address{Max-Planck-Institut f\"ur Mathematik, Vivatsgasse 7,
53111 Bonn,
Germany}
\email{gvalezuela@mpim-bonn.mpg.de}
\date{\today}
\begin{document}

\begin{abstract}
Let $X$ be a topological space with Noetherian mod $p$ cohomology and let $C^*(X;\mathbb{F}_p)$ be the commutative ring spectrum of $\mathbb{F}_p$-valued cochains on $X$. The goal of this paper is to exhibit conditions under which the category of module spectra on $C^*(X;\mathbb{F}_p)$ is stratified in the sense of Benson, Iyengar, Krause, providing a classification of all its localizing subcategories. We establish stratification in this sense for classifying spaces of a large class of topological groups including Kac--Moody groups as well as whenever $X$ admits an $H$-space structure. More generally, using Lannes' theory we prove that stratification for $X$ is equivalent to a condition that generalizes Chouinard's theorem for finite groups. In particular, this relates the generalized telescope conjecture in this setting to a question in unstable homotopy theory. 
\vspace{-2em}
\end{abstract}

\maketitle

\setcounter{tocdepth}{1}
\tableofcontents
\vspace{-2em}
\def\biblio{}

\section*{Introduction}\label{sec:introduction}

\subsection*{Background and motivation}

In a series of papers \cite{benson_local_cohom_2008,bik11} culminating in \cite{bik_finitegroups}, Benson, Iyengar, and Krause classified all modular representations of a finite group $G$ up to a certain equivalence relation: Two representations are considered equivalent if they can be constructed from each other via the tensor-triangulated structure available on the stable module category $\StMod_{\F_pG}$ of $G$. More precisely, they show that the localizing subcategories of $\StMod_{\F_pG}$ are parametrized by subsets of the projective variety of the group cohomology ring $H^*(G;\F_p)$. 

Earlier, a similar classification theorem was obtained by Neeman \cite{neemanchromtower} for the derived category of a Noetherian commutative ring $A$; in this case, localizing subcategories are parametrized by the Zariski spectrum of $A$. Both classification problems admit a common formulation in the setting of stable homotopy theory, as noticed first by Hovey, Palmieri, and Strickland \cite{hps_axiomatic}: Given a commutative ring spectrum $R$, we may ask when the localizing subcategories of the category $\Mod_R$ of $R$-module spectra are parametrized by the Zariski spectrum $\Spec(\pi_*R)$ via certain support functions. When this is the case, we follow Benson, Iyengar, and Krause and say that $\Mod_R$ is stratified or simply that stratification holds for $R$. It turns out that, in general, stratification is only reasonable to expect when $R$ is Noetherian, i.e., when $\pi_*R$ is a graded Noetherian ring, so we will work under this assumption from now on.

Neeman's theorem is then the special case that $R = HA$, the Eilenberg--Mac Lane ring spectrum of a Noetherian commutative ring $A$. Another important source of Noetherian commutative ring spectra comes from unstable homotopy theory, given by commutative ring spectra of $\F_p$-valued cochains $C^*(X;\F_p)$ on a topological space $X$ with Noetherian mod $p$ cohomology. In this language, the theorem of Benson, Iyengar, and Krause establishes stratification for cochains on $X= BG$, the classifying space of a finite group $G$. Their work has subsequently been extended by Benson and Greenlees \cite{bg_stratifyingcompactlie} and more recently in \cite{bchv1} for classifying spaces of certain topological groups as well as by Shamir \cite{shamir_strat} for certain spaces constructed from iterated spherical fibrations. This motivates the following guiding question:

\begin{quest*}
For which spaces $X$ with Noetherian mod $p$ cohomology ring is the category $\Mod_{C^*(X;\F_p)}$ of module spectra over the commutative ring spectrum of cochains $C^*(X;\F_p)$ stratified?
\end{quest*}

The goal of the present paper is to establish stratification in this sense for cochains on a large class of topological groups as well as for $H$-spaces with Noetherian mod $p$ cohomology, and to provide a provide a systematic approach to answering this question in general.

\subsection*{Main results}

We first consider commutative ring spectra of cochains on classifying spaces of  topological groups, extending our previous work in \cite{bchv1}. By reducing to the case of compact Lie groups via work of Broto and Kitchloo \cite{broto_kitchloo}, we in particular establish stratification for Kac--Moody groups. 

\begin{thma}[\Cref{thm:kacmoodystrat}]\label{thma}
If $K$ is a Kac--Moody group, then $\Mod_{C^*(BK;\F_p)}$ is stratified. 
\end{thma}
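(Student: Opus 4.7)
The strategy is to reduce from the infinite-dimensional Kac--Moody group $K$ to its finite-dimensional compact Lie subgroups, and then to apply the stratification results already known in the latter setting. The key topological input is the work of Broto--Kitchloo, which produces a finite diagram $\{H_\sigma\}_{\sigma \in \mathcal{S}}$ of compact Lie subgroups of $K$ (essentially the ``parabolic'' subgroups indexed by a poset $\mathcal{S}$ attached to the underlying Kac--Moody datum) such that $BK$ is mod $p$ cohomologically equivalent to $\hocolim_{\sigma \in \mathcal{S}} BH_\sigma$; in particular $H^*(BK;\F_p)$ is Noetherian, so the stratification question makes sense. Dually, $C^*(BK;\F_p)$ is the corresponding finite limit of the $C^*(BH_\sigma;\F_p)$ along restriction maps.

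First, I would invoke the detection criterion for stratification developed earlier in the paper, which, via Lannes' theory, reformulates stratification of $\Mod_{C^*(X;\F_p)}$ as a generalized Chouinard-type condition: detection of localizing subcategories on a suitable collection of ``elementary abelian'' points of $\Spec H^*(X;\F_p)$ arising from maps $BV \to X$ for elementary abelian $p$-groups $V$. Second, I would apply the stratification result for cochains on classifying spaces of compact Lie groups, due to Benson--Greenlees \cite{bg_stratifyingcompactlie} and extended in \cite{bchv1}, to each of the $H_\sigma$ in the Broto--Kitchloo diagram. Third, I would assemble these subgroup-wise stratification statements into a global statement for $K$, using that every elementary abelian $p$-subgroup of $K$ is subconjugate to some $H_\sigma$, so that every ``test point'' for $BK$ factors through one of the $BH_\sigma$.

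The main obstacle is this final assembly step: one must check that the Chouinard-type criterion for $BK$ actually follows from the corresponding criteria for the $BH_\sigma$. Concretely, this requires compatibility of Lannes' functor and of the support/cosupport theories with restriction along the maps $C^*(BK;\F_p) \to C^*(BH_\sigma;\F_p)$, together with a descent statement for localizing subcategories along the finite limit $C^*(BK;\F_p) \simeq \lim_\sigma C^*(BH_\sigma;\F_p)$. Once this compatibility is in place — which is exactly what the generalized Chouinard framework developed earlier in the paper is designed to provide — the theorem reduces cleanly to the compact Lie group case and follows.
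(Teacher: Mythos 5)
Your overall strategy---reduce $BK$ to compact Lie subgroups via Broto--Kitchloo and then descend stratification to elementary abelians---is the right one, and your observation that every elementary abelian $p$-subgroup of $K$ is subconjugate to one of the compact Lie subgroups is exactly what the paper uses for the detection half of the argument. (In the paper this is packaged via the finite $K$-CW complex $X$ with compact Lie isotropy and $p$-acyclic fixed point sets: one gets $C^*(BK)\in\Thick(C^*(BH)\mid H\in\operatorname{Iso}_K(X))$, so \Cref{lem:chouinardloc} and \Cref{lemma:chouinard-transitivity} show that $q_K\colon C^*(BK)\to\prod_{E\in\cE_p(K)}C^*(BE)$ is biconservative; see \Cref{prop:indcoinductionk1x}.) However, you explicitly flag this assembly step as the ``main obstacle'' without resolving it, and---more seriously---you never address the second, independent hypothesis of the descent theorem (\Cref{thm:stratification}): Quillen lifting for $q_K$. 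Theorem D shows that this condition is \emph{necessary} and not a formal consequence of detection, so biconservativity plus stratification of the targets cannot by themselves yield stratification of $\Mod_{C^*(BK)}$. In the paper, Quillen lifting is supplied by the Broto--Kitchloo generalization of Quillen's $F$-isomorphism theorem---$H^*(BK)\to\varprojlim_{\cal{A}_p(K)}H^*(BE)$ is an $F$-isomorphism---together with Rector's resulting decomposition $\cV_K=\coprod_{E}\cV^+_{K,E}$ of the variety (\Cref{thm:kmhfiso} and \Cref{cor:simplequillenliftingk1x}). Nothing in your proposal plays this role, and this is a genuine gap.

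A second, smaller issue: you propose to invoke the Lannes-theoretic reformulation (Theorem C, \Cref{thm:rectorstrat}), but that theorem requires $X$ to be $p$-good and is phrased in terms of the Rector category of $H^*(BK)$ rather than the group-theoretic Quillen category $\cal{A}_p(K)$. The paper deliberately avoids this route for Theorem A: the proof lives in Section 3, works with honest elementary abelian subgroups of $K$ and the map $q_K$, and never needs Lannes' theory or $p$-goodness of $BK$. If you insist on the Lannes route, you would additionally have to verify that $BK$ is $p$-good and identify $\cR(H^*(BK))$ with $\cal{A}_p(K)$---which again comes down to the Broto--Kitchloo $F$-isomorphism theorem you have omitted.
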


Hopf spaces with Noetherian mod $p$ cohomology form the second class of examples. Our proof of stratification in this case relies crucially on several deep results in unstable homotopy theory stemming from the resolution of the Sullivan conjecture, as outlined below. In particular, we make use of a structure theorem for $H$-spaces due to Broto, Crespo, and Saumell \cite{bcs_deconstructing} and Castellan, Crespo, and Scherer \cite{ccs_deconstructing}. 

\begin{thmb}[\Cref{thm:hnoetherian}]\label{thmb}
If $X$ is a connected $H$-space with Noetherian mod $p$ cohomology, then $\Mod_{C^{*}(X;\F_p)}$ is stratified.
\end{thmb}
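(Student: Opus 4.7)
The plan is to combine two ingredients already highlighted in the introduction: the Chouinard-style detection criterion for stratification established via Lannes' theory, which reduces stratification of $\Mod_{C^{*}(X;\F_p)}$ to testing modules against restriction along the components of $\Map(BV, X)$ for $V$ running over elementary abelian $p$-groups; and the deconstructing theorem for connected $H$-spaces with Noetherian mod $p$ cohomology of Broto--Crespo--Saumell \cite{bcs_deconstructing} and Castellana--Crespo--Scherer \cite{ccs_deconstructing}. Since $\Mod_{C^{*}(X;\F_p)}$ depends only on the $p$-completion $\pc{X}$, I would pass to this setting freely; the deconstructing theorem then gives a principal fibration
$$F \longrightarrow \pc{X} \longrightarrow B,$$
where $F$ is an $H$-space with \emph{finite} mod $p$ cohomology and $B$ is built from Eilenberg--Mac Lane spaces of type $K(\Z, n)$ and $K(\Z/p^r, n)$.

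First I would verify the Chouinard condition for the two basic building blocks. For $F$ finite, Miller's theorem gives $\Map(BV, F) \simeq F$ for every elementary abelian $V$, so the condition collapses to stratification of $\Mod_{C^{*}(F;\F_p)}$; this follows because $\pi_{*}C^{*}(F;\F_p)$ has a single graded prime, so the only candidate localizing subcategory is detected by the residue field. For $B$, Lannes' computation of the $T$-functor on Eilenberg--Mac Lane spaces makes $\Map(BV, B)$ explicit, and the associated Chouinard statement reduces to a classical assertion about cochains on products of Eilenberg--Mac Lane spaces that follows from Neeman's theorem applied to the polynomial or exterior algebras involved.

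Next I would propagate the Chouinard condition along $F \to \pc{X} \to B$. Applying $\Map(BV, -)$ yields a principal fibration
$$\Map(BV, F) \longrightarrow \Map(BV, \pc{X}) \longrightarrow \Map(BV, B)$$
for each $V$, and combining this with an Eilenberg--Moore-type descent argument for the ring map $C^{*}(B;\F_p) \to C^{*}(\pc{X};\F_p)$ lets one deduce the Chouinard condition for $\pc{X}$ from those for $F$ and $B$. The descent is well behaved precisely because $F$ has finite mod $p$ cohomology, so that $C^{*}(\pc{X};\F_p)$ is a suitably dualisable (in fact proper, in an appropriate sense) module over $C^{*}(B;\F_p)$.

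The main obstacle is this last descent step: one must control the interaction between support theories on the fiber, base, and total space, and guarantee that a vanishing criterion verified on $F$ and $B$ separately really propagates to $\pc{X}$. This is where the $H$-space hypothesis enters decisively, both to invoke the deconstructing theorem and to ensure the fibration in question is principal, which makes the cochain extension tractable via an Eilenberg--Moore spectral sequence argument. A secondary, more technical point is that $B$ is typically only an iterated fibration of Eilenberg--Mac Lane spaces rather than a strict product, forcing one to iterate the descent step; however, the inductive stage is structurally identical to the single-fibration case.
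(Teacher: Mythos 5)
Your overall strategy---decompose $\pc{X}$ via the Broto--Crespo--Saumell / Castellana--Crespo--Scherer fibration and then transport a Chouinard-type detection statement along it---is the right one, but you have reversed the roles of the fiber and the base in the deconstruction theorem, and the argument does not survive this reversal. The theorem states that $\pc{X}$ sits in a fibration $K(P,1)^\wedge_p \to \pc{X} \to (P_{B\Z/p}X)^\wedge_p$ in which the \emph{fiber} is the Eilenberg--Mac Lane piece ($P$ an abelian discrete $p$-toral group, so $C^*(K(P,1)^\wedge_p)\simeq C^*(BA)$ for $A$ a compact abelian Lie group, carrying the Noetherian, generally infinite, part of the cohomology), and the \emph{base} is the $B\Z/p$-null space with finite mod $p$ cohomology. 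There is no fibration of the shape you describe with $F$ finite and $B$ an iterated extension of spaces $K(\Z,n)$ and $K(\Z/p^r,n)$; for $n\geq 2$ most of these spaces do not even have Noetherian mod $p$ cohomology, so they could not serve as building blocks in this framework.

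The orientation is not cosmetic. The descent machinery (\Cref{thm:stratification}, implemented for fibrations in \Cref{prop:descentfiber}) only transports stratification from the \emph{target} of a ring map to its \emph{source}: one descends stratification of $\Mod_{C^*(F)}$ to $\Mod_{C^*(E)}$ along the restriction $C^*(E)\to C^*(F)$. Both hypotheses needed for this descent are supplied by finiteness of $H^*(B)$: the base is then $B\Z/p$-null, so $H^*(E)\to H^*(F)$ is an $F$-isomorphism (\Cref{lem:fisomorphism}), giving simple Quillen lifting; and $C^*(B)\in\Thick(\F_p)$, so the Eilenberg--Moore pushout square forces $C^*(E)\to C^*(F)$ to be biconservative (\Cref{lem:chouinardfiber}). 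In your version, with $F$ finite, the map $C^*(\pc{X})\to C^*(F)$ cannot be conservative unless $H^*(X)$ is itself finite, because $\Spec^h(H^*(F))$ is a single point and $\res$ would have to surject onto $\Spec^h(H^*(X))$ (\Cref{prop:reschouinard}); and the paper provides no mechanism for ascending stratification from the base $\Mod_{C^*(B)}$ to the total space, which is what your ``Eilenberg--Moore descent for $C^*(B)\to C^*(\pc{X})$'' would require. Once the fibration is put the right way around, your outline essentially becomes the paper's proof: the fiber contributes $C^*(BA)$, which is stratified by Benson--Greenlees, and \Cref{prop:descentfiber} finishes the argument in one step, with no iteration needed.
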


In particular, as a result of the general theory of stratifications, the generalized telescope conjecture holds for the module categories of Theorems A and B. 

Finally, we formulate an approach to answering the above question for any space with Noetherian mod $p$ cohomology, based again on Lannes' theory \cite{La} and its consequences. This allows us to reduce this categorical classification problem to a condition that suitable generalizes Chouinard's theorem \cite{chouinard} about projective representations for finite group; the terminology will be introduced more carefully in the next subsection. 

\begin{thmc}[\Cref{thm:rectorstrat}]
Let $X$ be a $p$-good connected space with Noetherian mod $p$ cohomology, then $\Mod_{C^*(X;\F_p)}$ is stratified if and only if $X$ satisfies Chouinard's condition.
\end{thmc}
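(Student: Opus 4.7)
Write $R = C^*(X;\F_p)$. The plan is to reduce stratification of $\Mod_R$, via the general machinery of Benson--Iyengar--Krause, to a detection condition expressed through maps $BV \to X$ with $V$ an elementary abelian $p$-group, and then identify this condition with Chouinard's. Since $H^*(X;\F_p)$ is Noetherian, $R$ is a Noetherian commutative ring spectrum and the BIK framework supplies a support theory on $\Mod_R$ valued in $\Spec^h(H^*(X;\F_p))$. In this setting, stratification is equivalent to the conjunction of (i) the local-to-global principle and (ii) minimality of each local category $\Gamma_{\fp}\Mod_R$ at every homogeneous prime $\fp \subset H^*(X;\F_p)$. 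The local-to-global principle holds by Noetherianness, so the theorem reduces to matching minimality against Chouinard's condition.

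First I would formulate Chouinard's condition precisely as the statement that a module $M \in \Mod_R$ is trivial (in the appropriate sense relative to support) if and only if $f^*M \in \Mod_{C^*(BV;\F_p)}$ is trivial for every map $f \colon BV \to X$ with $V$ an elementary abelian $p$-group, where $f^*$ is the pullback induced by the map of commutative ring spectra $R \to C^*(BV;\F_p)$. Next, I invoke Lannes' theory: the $T$-functor computes $H^*(\map(BV,X);\F_p)$ and, combined with the $p$-goodness hypothesis, Rector's identification of primes gives that every prime $\fp \in \Spec^h(H^*(X;\F_p))$ arises as the image of some prime of $H^*(BV;\F_p)$ under a map $f \colon BV \to X$. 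This geometric surjectivity on Spec is the bridge between the purely algebraic support picture and the unstable homotopy theory of $X$.

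For the \emph{reverse} direction, assuming Chouinard's condition, I would fix a prime $\fp$ and, by the Rector/Lannes step, choose $f \colon BV \to X$ so that $f^*$ carries the $\fp$-local category of $\Mod_R$ into a $\fq$-local category of $\Mod_{C^*(BV;\F_p)}$ for some $\fq$ above $\fp$. Since $V$ is elementary abelian, $\Mod_{C^*(BV;\F_p)}$ is stratified by BIK, so every nonzero object in its $\fq$-local category has nonzero $\fq$-support. Applying Chouinard's condition termwise then forces any nonzero object of $\Gamma_\fp \Mod_R$ to generate the entire local category, giving minimality and hence stratification. For the \emph{forward} direction, stratification gives by construction that supports detect vanishing, and together with the BIK stratification of each $\Mod_{C^*(BV;\F_p)}$ and the surjectivity on Spec, this implies Chouinard's condition.

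The main obstacle will be the reverse direction: turning the pointwise vanishing condition provided by Chouinard into a statement of minimality at each local category. The delicate part is ensuring that the collection of maps $BV \to X$ is rich enough, compatibly with Lannes' $T$-functor, to detect not merely the underlying points of $\Spec^h$ but also the residue-field-like behaviour of the local categories $\Gamma_\fp \Mod_R$; this is where $p$-goodness enters crucially, since it controls the comparison between $X$ and its $\F_p$-completion, where Lannes' theory is best behaved.
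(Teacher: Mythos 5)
Your overall outline matches the paper's strategy: realize primes of $H^*(X)$ via Lannes' theory as maps $BV \to X$, use the known stratification of $\Mod_{C^*(BV;\F_p)}$, and pass minimality of the local categories $\Gamma_{\fp}\Mod_{C^*(X)}$ back and forth along the resulting map of ring spectra. However, your reverse direction has a genuine gap: you deduce minimality of $\Gamma_{\fp}\Mod_{C^*(X)}$ from Chouinard's condition (i.e.\ conservativity) alone, and this implication is false in general. Conservativity of $\rho_X$ only tells you that a nonzero $M \in \Gamma_{\fp}\Mod_{C^*(X)}$ has nonzero induced module; minimality requires that $\Hom(M,N) \not\simeq 0$ for \emph{all} nonzero $M,N$ in the local category, which after adjunction amounts to $\supp(\rho_X^* M) \cap \cosupp(\rho_X^! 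N) \neq \varnothing$. Both sets lie in the fiber $\res_{\rho_X}^{-1}(\fp)$, but nothing in conservativity forces them to meet: this is exactly the \emph{Quillen lifting} condition, and the paper's Theorem D shows that for a biconservative morphism with stratified target, stratification of the source is \emph{equivalent} to Quillen lifting --- so it cannot be dispensed with. The paper supplies the missing ingredient in \Cref{prop:autoql}: $\rho_X$ automatically satisfies (simple) Quillen lifting, proved using Rector's strong form of Quillen stratification (the decomposition $\cV_{H^*(X)} = \coprod \cV^+_{\phi,E}$ with transitive ``Weyl group'' action on each $\cV_E^+$), the realization of morphisms in Rector's category by maps of cochain algebras via Lannes, and the Avrunin--Scott identities for maps between elementary abelian $p$-groups. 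Your claim that $f^*$ ``carries the $\fp$-local category into a $\fq$-local category for some $\fq$ above $\fp$'' is also not right as stated: the support of $\rho_X^*M$ can spread over many primes in the fiber, and controlling which ones occur is the whole point of the lifting argument.

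Two secondary issues. First, the paper's Chouinard condition is indexed only by the objects of Rector's category, i.e.\ maps $\varphi \colon H^*(X) \to H^*(BV)$ that are \emph{finite}; using all maps $BV \to X$ gives a weaker (easier) detection condition and would break the ``only if'' direction, since surjectivity of $\res_{\rho_X}$ and the necessity argument via \Cref{prop:reschouinard} are calibrated to the finite ones. Second, Chouinard's condition in the paper is \emph{bi}conservativity --- both induction and coinduction must be conservative --- and the cosupport/coinduction half is genuinely used in the minimality criterion; your proposal addresses only induction. Your forward direction (stratification implies Chouinard) is essentially the paper's: it follows from surjectivity of $\res_{\rho_X}$, which Rector's decomposition provides, combined with \Cref{prop:reschouinard}, but again you need the dual statement for coinduction as well.
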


In particular, we verify Chouinard's condition for $H$-spaces with Noetherian mod $p$ cohomology, thereby recovering Theorem B as an instance of Theorem C.

\subsection*{Outline of the methods}

From our perspective, stable homotopy theory provides a convenient context for developing an abstract descent theory for stratifications that places the results mentioned above in a uniform setting. Consider thus a morphism $f\colon R \to S$ of Noetherian commutative ring spectra, i.e., homotopically coherent ring spectra with Noetherian graded commutative coefficient rings. Inspired by \cite{bik_finitegroups} and \cite{bg_stratifyingcompactlie}, in \cite{bchv1} we isolated two key conditions on $f$ that guarantee that stratification descends from $\Mod_S$ to $\Mod_R$ along $f$: 
	\begin{enumerate}
		\item Induction and coinduction along $f$ give conservative functors $\Mod_R \to \Mod_S$, i.e., they reflect isomorphisms. In this case, we say that $f$ is biconservative.
		\item $f$ satisfies a support theoretic condition  called Quillen lifting. 
	\end{enumerate}
Geometrically, these conditions correspond, roughly speaking, to the surjectivity and injectivity of the induced map on homogeneous Zariski spectra $\res_f\colon \Spec^h(\pi_*S) \to \Spec^h(\pi_*R)$.	
	
In \cite{bchv1}, we showed that Conditions (1) and (2) are sufficient to descent stratification from $\Mod_S$ to $\Mod_R$. The key novelty in this result was that $f$ was not assumed to satisfy any additional finiteness conditions, in contrast to the situation studied in the aforementioned works \cite{bik_finitegroups, bg_stratifyingcompactlie}. This opened up the possibility to apply such descent techniques to many new examples. 

Our first aim in this paper is to characterize those biconservative morphisms $f\colon R \to S$ that allow descent of stratification. In particular, we complement \cite{bchv1} by showing that Condition (2) is also necessary for descent; in fact, we exhibit several equivalent characterizations of this condition. Informally speaking, this shows that our descent theorem is optimal: 

\begin{thmd}[\Cref{thm:stratification} and \Cref{cor:as}]\label{thmd}
Let $f\colon R \to S$ be a biconservative morphism of Noetherian commutative ring spectra and assume $\Mod_S$ is stratified, then the following are equivalent:
	\begin{enumerate}
		\item $\Mod_R$ is stratified.
		\item The morphism $f$ satisfies Quillen lifting. 
		\item Every module $M \in \Mod_R$ satisfies the following Avrunin--Scott identities:
\[
\supp_{S}(f^* M) = \res_f^{-1}\supp_{R}(M) \quad \text{and} \quad
\cosupp_S(f^! M) = \res_f^{-1}\cosupp_R(M),
\]
		where $f^*$ and $f^!$ denote induction and coinduction along $f$, respectively. 
	\end{enumerate}
\end{thmd}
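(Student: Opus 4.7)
The plan is to leverage the descent theorem from \cite{bchv1}, which already establishes $(2) \Rightarrow (1)$, and close the circle by proving $(1) \Rightarrow (3) \Rightarrow (2)$. The thread running through both new implications is that biconservativity of $f$ lets one compare support theories on $\Mod_R$ and $\Mod_S$, while stratification of $\Mod_S$ allows the decomposition of $f^*M$ and $f^!M$ prime-by-prime along the homogeneous spectrum of $\pi_*S$.

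For $(1) \Rightarrow (3)$, fix $M \in \Mod_R$. The inclusion $\supp_S(f^*M) \subseteq \res_f^{-1}\supp_R(M)$ is formal: induction commutes with base change, so $(f^*M)_\fq$ nonzero forces $M_{\res_f(\fq)}$ nonzero, and no stratification hypothesis is required. The reverse inclusion is the heart of the matter. Given $\fp \in \supp_R(M)$, the local cohomology piece $\Gamma_\fp M$ is nonzero in the stratified category $\Mod_R$, hence by biconservativity $f^*\Gamma_\fp M \neq 0$ in $\Mod_S$. A projection-type formula identifies $f^*\Gamma_\fp M$ with the summand of $f^*M$ supported over $\res_f^{-1}\{\fp\}$; since this is nonzero and $\Mod_S$ is stratified, some $\fq$ with $\res_f(\fq)=\fp$ must lie in $\supp_S(f^*M)$. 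The second Avrunin--Scott identity follows from the dual argument in which induction and support are replaced by coinduction and cosupport, using the conservativity of $f^!$ in place of that of $f^*$.

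For $(3) \Rightarrow (2)$, the idea is to feed the Avrunin--Scott identity a family of test $R$-modules that witness Quillen lifting. A natural choice is the local cohomology modules $\Gamma_\fp \unit_R$, whose $R$-support is exactly $\{\fp\}$. By biconservativity $f^*\Gamma_\fp \unit_R$ is nonzero, and the AS identity then forces $\supp_S(f^*\Gamma_\fp \unit_R) = \res_f^{-1}\{\fp\}$ to be nonempty, producing a prime of $\pi_*S$ lying over each $\fp \in \Spec^h(\pi_*R)$. Quillen lifting as formulated in \cite{bchv1} asks for slightly more than pointwise surjectivity, namely compatibility with specialization-closed subsets; but applying the support identity to modules whose support is a specific specialization-closed set, and combining with the coherent behaviour of $\Gamma$ along inclusions, delivers this refinement by naturality.

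I expect the main obstacle to be the reverse inclusion in $(1) \Rightarrow (3)$. Conservativity of $f^*$ alone only detects nonvanishing; it does not pinpoint the correct prime over $\fp$. Making this jump rests on the projection-formula identification of $f^*\Gamma_\fp M$ with the $\res_f^{-1}\{\fp\}$-local piece of $f^*M$, together with the local-to-global principle available for the stratified $\Mod_S$. The cosupport half requires parallel care with injective envelopes and coinduction, and it is there that the conservativity of $f^!$ enters as an independent hypothesis rather than a consequence of conservativity of $f^*$. Once these support-theoretic identifications are in place, the remaining implications reduce to bookkeeping around the standard correspondence between localizing subcategories and subsets of $\Spec^h$.
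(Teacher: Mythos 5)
Your overall architecture---$(2)\Rightarrow(1)$ from the descent theorem of \cite{bchv1}, then $(1)\Rightarrow(3)\Rightarrow(2)$---is consistent with the paper, and your $(3)\Rightarrow(2)$ step is, once stripped of the detour through ``compatibility with specialization-closed subsets'' (no such refinement is needed), exactly the paper's proof of \Cref{cor:as}: if $\fp\in\res_f(\supp_S(f^*M))\cap\res_f(\cosupp_S(f^!N))$, then the identities give $\fp\in\supp_R(M)\cap\cosupp_R(N)$, and the nonempty fiber $\res_f^{-1}(\fp)$ is contained in $\res_f^{-1}\supp_R(M)\cap\res_f^{-1}\cosupp_R(N)=\supp_S(f^*M)\cap\cosupp_S(f^!N)$, which is Quillen lifting.

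There is, however, a genuine gap in your $(1)\Rightarrow(3)$. For the reverse inclusion you produce, for each $\fp\in\supp_R(M)$, \emph{some} prime $\fq\in\res_f^{-1}(\fp)$ with $\fq\in\supp_S(f^*M)$: nonvanishing of $f^*\Gamma_{\fp}M$ (the piece of $f^*M$ supported over $\res_f^{-1}(\fp)$) plus the local-to-global principle in $\Mod_S$ only tells you the fiber is \emph{met} by $\supp_S(f^*M)$, not which primes in it occur. This proves $\supp_R(M)\subseteq\res_f\supp_S(f^*M)$, which already holds by conservativity of $f^*$ alone with no stratification hypothesis (\Cref{thm:suppbasechange}(1)); the Avrunin--Scott identity is strictly stronger and asserts that \emph{every} $\fq\in\res_f^{-1}(\fp)$ lies in $\supp_S(f^*M)$. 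The missing ingredient is where minimality of $\Gamma_{\fp}\Mod_R$ actually enters: fix an arbitrary $\fq$ over $\fp$, apply the projection formula $f_*\Gamma_{\fq}f^*M\simeq f_*(\Gamma_{\fq}S)\otimes_R M$, and use that for $M\in\Gamma_{\fp}\Mod_R$ minimality forces $\supp_R(f_*(\Gamma_{\fq}S)\otimes_R M)=\supp_R(f_*\Gamma_{\fq}S)\cap\supp_R(M)=\{\fp\}\cap\supp_R(M)$ (\Cref{lem:p-tensor_hom_(co)supp}); conservativity of $f_*$ then yields $\fq\in\supp_S(f^*M)$ whenever $\fp\in\supp_R(M)$. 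This is \Cref{prop:abstractsubgroupthm}, globalized as \cite[Proposition 3.14]{bchv1}, and the dual argument with $\Lambda^{\fp}$ and internal $\Hom$ handles the cosupport identity. Without this step your cycle of implications does not close, since the weaker statement you establish cannot feed the $(3)\Rightarrow(2)$ argument.
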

Given a commutative ring spectrum $R$, this theorem thus reduces the problem of stratifying $\Mod_R$ to the construction of a suitable morphism $f\colon R \to S$. By analogy with Chouinard's theorem for discrete groups, for the remainder of this introduction we will say that a morphism $f\colon R \to S$ satisfies Chouinard's condition\footnote{Note that this terminology is reserved for a more restrictive situation in the main body of the paper.} whenever $f$ is biconservative.

To prove Theorem D, we introduce and study a stronger version of Quillen lifting called simple Quillen lifting that in practice is easier to check for a given example. Furthermore, we relate both Chouinard's condition as well as simple Quillen lifting to properties of the induced morphism 
\[
\xymatrix{\res_f\colon \Spec^h(\pi_*S) \ar[r] & \Spec^h(\pi_*R).}
\]

Our main theorems are then an application of this general descent theorem; however, in each case additional input from unstable homotopy theory is required. Theorem A follows the line of arguments in our previous paper \cite{bchv1} and earlier work of Benson and Greenlees \cite{bg_stratifyingcompactlie} on stratification for topological groups $G$. The idea is to descent stratification along a morphism of commutative ring spectra
\[
\xymatrix{\rho_{G}\colon C^*(BG;\F_p) \ar[r] & \prod_{E \in \cal{E}_p(G)}C^*(BE;\F_p),}
\]
where the product is indexed on a set of representatives of conjugacy classes of elementary abelian $p$-subgroups of $G$. By virtue of Theorem D, it suffices to show that $\rho_{G}$ satisfies Quillen lifting and Chouinard's condition. In the case of a Kac--Moody group, the first property follows from a generalization of Quillen's $F$-isomorphism to Kac--Moody groups due to Broto and Kitchloo \cite{broto_kitchloo}, see \Cref{thm:kmhfiso}. The verification of Chouinard's condition for $\rho_{G}$ uses the construction of an appropriate compactly generated $G$-space $X$ equipped with specified isotropy, which may be thought of as a suitable replacement of the morphism $BG \to BU(n)$ induced by a unitary embedding of a compact Lie group $G$. In fact, our methods apply more generally to spaces occurring in the Broto--Kitchloo hierarchy of compact Lie groups \cite{kropholler_mislin}, see \Cref{thm:stratk1x}. 

We would like to mimic this approach for an arbitrary connected space $X$ with Noetherian mod $p$ cohomology. However, we are immediately confronted with the problem of finding a homotopical analogue of the morphism $\rho_{G}$ for $X$, which in turn relies on finding a replacement of the notion of elementary abelian subgroup for $X$. We resolve this problem by making use of Lannes' \cite{lannes1985, La} and Miller's \cite{miller_sullivan} work on the Sullivan conjecture to construct a morphism of commutative ring spectra
\[
\xymatrix{\rho_X\colon C^*(X;\F_p) \ar[r] & \prod_{(E,\varphi)\in \cE(X)} C^*(BE;\F_p).}
\]
The product in the target is indexed on homotopy classes of maps $\varphi\colon BE \to X$ which induce finite algebra morphisms in mod $p$ cohomology.\footnote{For simplicity, we omit $p$-completions throughout the introduction.} For $X = BG$ the classifying space of a finite $p$-group, $\rho_X$ specializes to the restriction functor featuring in the work of Chouinard, Quillen, as well as Benson--Iyengar--Krause. We thus view $\rho_X$ as a suitable catalyst for our descent techniques; in particular, we say that $X$ satisfies Chouinard's condition if $\rho_X$ does. 

Lannes' theory provides a bijection between the set $\cE(X)$ appearing in the target of $\rho_X$ and the objects in the Rector category of $H^*(X;\F_p)$, a category which generalizes Quillen's category for $X= BG$. An elaboration of this observation combined with Rector's work \cite{rector_quillenstrat} then proves that $\rho_X$ satisfies Quillen lifting for any $p$-good connected space $X$ with Noetherian mod $p$ cohomology, see \Cref{prop:autoql}. In light of Theorem D, this establishes Theorem C.

We then demonstrate this framework in the case of $H$-spaces $X$ with Noetherian mod $p$ cohomology. By Theorem C, in order to prove stratification for $\Mod_{C^*(X;\F_p)}$, it remains to show that $\rho_X$ satisfies Chouinard's condition. To this end, we make use of a theorem due to Broto, Crespo, and Saumell \cite{bcs_deconstructing} and Castellana, Crespo, and Scherer \cite{ccs_deconstructing}, according to which any $H$-space $X$ with Noetherian mod $p$ cohomology may be ``decomposed'', after $p$-completion, into the classifying space of an abelian compact Lie group and a nilpotent space with finite mod $p$-cohomology. Combined with work of Henn, Lannes, and Schwartz \cite{hls_categoryU}, we arrive at:

\begin{thme}[\Cref{thm:nhchouinard}]\label{thme}
Any connected $H$-space $X$ with Noetherian mod $p$ cohomology satisfies Chouinard's condition, i.e., induction and coinduction along $\rho_X$ are conservative.
\end{thme}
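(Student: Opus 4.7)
The plan is to combine the structure theorem of Broto--Crespo--Saumell and Castellana--Crespo--Scherer for $H$-spaces with Noetherian mod $p$ cohomology with Lannes' theory, in the form developed by Henn--Lannes--Schwartz. Since $C^{*}(X;\F_p)$ only sees the $p$-completion of $X$, I may assume $X$ is $p$-complete. The structure theorem then exhibits $X$ as the total space of a fibration
\[
F \longrightarrow X \longrightarrow BG,
\]
with $G$ a compact abelian Lie group and $F$ a nilpotent $p$-complete space with finite mod $p$ cohomology.

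The first step is to identify the indexing set $\cE(X)$ of homotopy classes of maps $\varphi\colon BE \to X$ that induce finite algebra morphisms on mod $p$ cohomology. The Miller/HLS triviality theorem asserts $\map_{*}(BE, F) \simeq *$ for every non-trivial elementary abelian $E$. Applying this fibrewise along the evaluation fibration $\map(BE, X) \to \map(BE, BG)$, the projection $X \to BG$ induces a bijection $\cE(X) \xrightarrow{\ \cong\ } \cE(BG)$ on connected components, so $\rho_X$ fits in a commutative triangle
\[
\begin{tikzcd}[column sep=small]
C^{*}(BG;\F_p) \ar[r, "f"] \ar[dr, "\rho_{BG}"'] & C^{*}(X;\F_p) \ar[d, "\rho_X"] \\
 & \prod_{(E,\varphi)} C^{*}(BE;\F_p).
\end{tikzcd}
\]

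The second step is to deduce biconservativity of $\rho_X$ from that of $\rho_{BG}$. Since $G$ is a compact abelian Lie group, $\rho_{BG}$ is biconservative by the classical Chouinard theorem, and in any case as a special instance of Theorem A and the results of \cite{bg_stratifyingcompactlie, bchv1}. To transfer this through $f$, one observes that the finiteness of $H^{*}(F;\F_p)$ together with the Eilenberg--Moore spectral sequence makes $C^{*}(X;\F_p)$ a perfect, hence dualisable, module over $C^{*}(BG;\F_p)$. Perfect morphisms of commutative ring spectra have conservative induction and coinduction, so $f^{*}$ and $f^{!}$ are conservative; composing with the biconservativity of $\rho_{BG}$ yields biconservativity of $\rho_X$, as required.

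The main obstacle will be pinning down the Miller/HLS input in the required generality: one needs to verify that the fibre $F$ arising from the BCS/CCS decomposition genuinely satisfies the nilpotence and finite-type hypotheses of the Sullivan conjecture package, so that the vanishing $\map_{*}(BE,F) \simeq *$ holds for every non-trivial $E$, and that this passes to the bijection $\cE(X) \cong \cE(BG)$. A secondary technical point is the perfectness of $C^{*}(X;\F_p)$ over $C^{*}(BG;\F_p)$, which requires controlling the convergence of the Eilenberg--Moore spectral sequence and extracting a genuine finiteness statement on the relative homotopy. With these two inputs in hand, the rest of the argument is a formal descent along a biconservative and perfect morphism of commutative ring spectra.
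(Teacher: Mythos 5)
There is a genuine gap, and it starts with the structure theorem itself: you have the fibration backwards. The Broto--Crespo--Saumell / Castellana--Crespo--Scherer decomposition exhibits $X^\wedge_p$ as the \emph{total space} of a fibration whose \emph{fiber} is $K(P,1)^\wedge_p \simeq BA^\wedge_p$ for $A$ a compact abelian Lie group and whose \emph{base} is the nullification $(P_{B\Z/p}X)^\wedge_p$, a $B\Z/p$-null nilpotent space with finite mod $p$ cohomology. It does not produce a fibration $F \to X \to BG$ with $F$ of finite mod $p$ cohomology. For $X = S^3\langle 3\rangle$ the decomposition is $BS^1 \to S^3\langle 3\rangle \to S^3$; since $S^3\langle 3\rangle$ is $3$-connected, every map $S^3\langle 3\rangle \to BS^1$ is null and no fibration of your shape exists. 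Both of your key steps inherit this error: the identification of $\cE(X)$ should come from the fiber inclusion, not a projection to a base (the paper shows $H^*(i)\colon H^*(X) \to H^*(BA)$ is a uniform $F$-isomorphism via \Cref{lem:fisomorphism}, whence $\cR(X) \cong \cR(BA)$ by Rector), and the morphism you actually need to control is $C^*(X) \to C^*(BA)$, cochains on the total space mapping to cochains on the \emph{fiber}. That morphism is emphatically not perfect: the paper points out in \Cref{sec:s3cover} that $C^*(S^3\langle 3\rangle) \to C^*(BS^1)$ is neither finite nor split.

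Separately, the assertion that ``perfect morphisms of commutative ring spectra have conservative induction and coinduction'' is false even on its own terms: $H\Z \to H\F_p$ is perfect, yet induction kills $H\Q$. The mechanism the paper uses is different: since the base $B$ of the (correctly oriented) fibration has finite mod $p$ cohomology, the augmentation $C^*(B) \to \F_p$ is cosmall, i.e.\ $C^*(B) \in \Thick_{C^*(B)}(\F_p)$, hence biconservative by \Cref{lem:chouinardloc}; the Eilenberg--Moore property gives a pushout square of commutative ring spectra, and the Beck--Chevalley base change of \Cref{lem:chouinardbasechange} transports biconservativity to $C^*(X) \to C^*(BA)$ (this is \Cref{lem:chouinardfiber}). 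Composing with the biconservativity of $\rho_{BA}$ (Benson--Greenlees) then yields the theorem. To repair your argument you would need to reverse the roles of fiber and base throughout and replace the perfectness step by this cosmallness-plus-base-change argument.
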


Theorem B is now an immediate consequence of Theorem C and Theorem E. In fact, we also give a proof of Theorem B that does not rely on Theorem E, but instead applies Theorem C directly to the fiber sequence provided by the deconstruction of $H$-spaces, see the proof of \Cref{thm:hnoetherian}. In particular, our result applies to $S^3\langle 3 \rangle$, the 3-connected cover of $S^3$, to provide an example of a commutative ring spectrum for which we deduce stratification, but are currently unable to prove costratification.

\subsection*{Organization of the paper}

The first two sections deal with the interaction between the stratification theory of Benson--Iyengar--Krause and morphisms of commutative ring spectra, culminating in the proof of our main descent theorem. Section 1 starts with a review of support theory and then studies base-change and conservativity in this context. The subject of Section 2 is the proof of Theorem D. Along the way, we establish the main properties of Quillen lifting and simple Quillen lifting, augmented by some examples. 

From Section 3 onwards, we specialize to cochains on spaces with Noetherian mod $p$ cohomology. Theorems A, B, and C are proven in Sections 3, 4, and 5, respectively. Each section begins with a review of the necessary background material and concludes with explicit examples. 

\subsection*{Notation and conventions}

\begin{itemize}
	\item Let $R$ be a commutative ring spectrum and write $\Mod_R$ for the category of module spectra over $R$, where the symmetric monoidal structure is given by the relative smash product $\otimes = \otimes_R$ with unit $R$. If $\cS$ is a set of objects in $\Mod_R$, then the smallest thick subcategory of $\Mod_R$ containing 
	$\cS$ will be denoted by $\Thick_R(\cS)$, and the smallest localizing subcategory of $\Mod_R$ containing $\cS$ will be denoted $\Loc_R(\cS)$. 
	If $R$ is clear from context, we sometimes omit the corresponding subscript. 
	\item We write $H^*(X)$ for the mod $p$ cohomology of a space $X$. 
  \item The $p$-completion of a space $X$ is always meant in the sense of Bousfield--Kan \cite{bousfield_kan}, and is denoted $\pc{X}$.  A space $X$ is called $p$-complete if the $p$-completion map $X \to \pc{X}$ is a homotopy equivalence, and is called $\F_p$-finite if $H^*(X)$ is finite. A space is $p$-good if the natural completion map $X \to \pc{X}$ induces an isomorphism on mod $p$ cohomology.
  \item We write $\res_{f}\colon \Spec^h(\pi_*B) \to \Spec^h(\pi_*A)$ for the restriction induced by a morphism of commutative ring spectra $f\colon A \to B$, where $\Spec^h$ denotes the homogeneous spectrum of prime ideals.  
  \item A functor $F\colon \cC \to \cD$ is said to be conservative if it detects equivalences; if $F$ is an exact functor between stable categories, then this is equivalent to $F$ detecting the zero object, i.e., $F(X) \simeq 0$ implies $X \simeq 0$ for any $X \in \cC$. 
\end{itemize}

\subsection*{Acknowledgements}

We would like to thank Jesper Grodal and Julia Pevtsova for useful conversations, and the referee for many helpful comments on an earlier version of this paper. The first author was partially supported by the DNRF92 and the European Unions Horizon 2020 research and innovation programme under the Marie Sklodowska-Curie grant agreement No.~751794, the second author was partially supported by FEDER-MEC grant MTM2016-80439-P, the third  author was partially supported by the SFB 1085 ``Higher Invariants", and the fourth author would like to thank the Max Planck Institute for Mathematics for its hospitality. The first and second author would furthermore like to thank the Isaac Newton Institute for Mathematical Sciences, Cambridge, for support and hospitality during the programme \emph{Homotopy Harnessing Higher Structures}, where work on this paper was undertaken. This work was supported by EPSRC grant no EP/K032208/1.

\section{Support and base-change}\label{sec:support}
In this section we review the support theory introduced by Benson, Iyengar, and Krause \cite{benson_local_cohom_2008}, specialized to the setting of structured ring spectra, and show how this support theory interacts with a morphism $f \colon R \to S$ of ring spectra. 
\subsection{Support theory and stratification}

Let $R$ be a Noetherian commutative ring spectrum and fix a homogeneous prime ideal $\mathfrak p \in \Spec^h(\pi_*R)$. Associated to $\mathfrak p$, Benson, Iyengar, and Krause \cite{benson_local_cohom_2008,bik12} have constructed local cohomology and local homology functors on $\Mod_R$, denoted $\Gamma_{\fp}$ and $\Lambda^{\frak p}$ respectively. In the context of structured ring spectra, these have been studied in detail in \cite{bhv2,bchv1}, from where we briefly recall their definition. For $\fr\in \Spec^h(\pi_*R)$, one first constructs Koszul objects $R\mm\fr$ \cite[Section 3]{bchv1}, the homotopical quotient of $R$ by $\fr$. This object depends on a choice of generators of $\fr$, but the thick subcategory $\Thick_R(R\mm\fr)$ generated by $R\mm\fr$ does not. Consider then the specialization closed set $\cV(\fp)=\{\fq\in\Spec^h(\pi_*R)\mid \fq \supseteq \fp\}$. The inclusion of the localizing subcategory $\Loc_R(R\mm\fr\mid \fr \in\cV(\fp) )$ into $\Mod_R$ has a colimit preserving right adjoint $\Gamma_{\cV(\fp)}$. Moreover, there is a localization functor $L_{\cZ(\fp)}$  on $\Mod_R$ characterized by the property $\pi_*L_{\cZ(\fp)}M\cong(\pi_*M)_{\fp}$ for all $M\in\Mod_R$.

\begin{defn}
	For $\fp\in\Spec^h(\pi_*R)$ we define the local cohomology functor at $\fp$ as 
	\[
	\Gamma_{\fp}=\Gamma_{\cV(\fp)}L_{\cZ(\fp)}.
	\]
The functor $\Gamma_{\fp}$ admits a right adjoint, the local homology functor $\Lambda^{\fp}$ at $\fp$.
\end{defn}

\begin{rem}
	The functor $\Gamma_{\fp}$ is smashing, that is, $\Gamma_{\fp}M\simeq\Gamma_{\fp}R\otimes M$ for all $R$-modules $M$. In what follows we will denote the essential image of $\Gamma_{\fp}$ by $\Gamma_{\fp}\Mod_R$, and likewise for $\Lambda^{\fp}$.
\end{rem}
We refer the reader to the aforementioned references for a more detailed construction and further properties. These functors give rise to a good notion of support and cosupport for $R$-modules as follows. 

\begin{defn}
  Let $M$ be an $R$-module, then we define the support and cosupport of $M$ as
\[
\supp_R(M)  = \{ \frak p \in \Spec^h(\pi_*R) \mid \Gamma_{\frak p}M \not \simeq 0\}, \quad \text \quad \cosupp_R(M) = \{ \frak p \in \Spec^h(\pi_*R) \mid \Lambda^{\frak p}M  \not \simeq 0\}.
\]
\end{defn}

The next theorem provides some evidence for the usefulness of these functors, see \cite[Theorem 7.2]{bik11} and \cite[Remark 8.8]{bik12}.

\begin{thm}[Benson--Iyengar--Krause]\label{thm:local_global}
If $R$ is Noetherian, then the local-to-global principle holds for $R$, i.e., for every $M \in \Mod_R$ we have
\[
\Loc_R(M) = \Loc_R(\Gamma_{\fp}M \mid \fp \in \Spec^h(\pi_*R)), \quad \text{} \quad \Coloc_R(M) = \Coloc_R(\Lambda^{\fp}M \mid \fp \in \Spec^h(\pi_*R)).
\]
In particular, $M$ is trivial if and only if $\supp_R(M) = \varnothing$ if and only if $\cosupp_R(M) = \varnothing$.
\end{thm}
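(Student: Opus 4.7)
The ``in particular'' clause follows formally from the two displayed identities: if $\supp_R(M) = \varnothing$ then $\Loc_R(M) = \Loc_R(\varnothing) = 0$, so $M \simeq 0$, and dually for cosupport. I would therefore focus on the $\Loc_R$ identity; the $\Coloc_R$ version is formally dual, using that $\Lambda^{\fp}$ is the right adjoint of $\Gamma_{\fp}$ and running the argument in the opposite localization-picture.

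For the inclusion $\Loc_R(\Gamma_{\fp}M \mid \fp) \subseteq \Loc_R(M)$, the smashing property of $\Gamma_{\fp}$ yields $\Gamma_{\fp}M \simeq \Gamma_{\fp}R \otimes_R M$, and the full subcategory
\[
\cC := \{\, X \in \Mod_R \mid X \otimes_R M \in \Loc_R(M)\,\}
\]
is localizing (since $(-)\otimes_R M$ is exact and commutes with colimits) and contains $R$; hence $\cC = \Mod_R$ and therefore $\Gamma_{\fp}R \otimes_R M \in \Loc_R(M)$ for every $\fp$. Applying the same tensor-absorption argument with $\Loc_R(\Gamma_{\fp}M \mid \fp)$ in place of $\Loc_R(M)$ shows that the opposite inclusion $\Loc_R(M) \subseteq \Loc_R(\Gamma_{\fp}M \mid \fp)$ reduces to the absolute statement
\[
R \in \Loc_R\bigl(\Gamma_{\fp}R \mid \fp \in \Spec^h(\pi_*R)\bigr).
\]

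To establish this last containment I would run a Noetherian induction on the spectrum $\Spec^h(\pi_*R)$, which is itself Noetherian by hypothesis. For every specialization closed subset $\cV \subseteq \Spec^h(\pi_*R)$ the theory supplies a localization triangle $\Gamma_{\cV}R \to R \to L_{\cV}R$, and $\Gamma_{\Spec^h(\pi_*R)}R \simeq R$. A filtration $\varnothing = \cV_{-1} \subsetneq \cV_0 \subsetneq \cV_1 \subsetneq \cdots$ of $\Spec^h(\pi_*R)$ obtained by successively adjoining minimal primes in the complement then produces a tower $\{\Gamma_{\cV_n}R\}$ whose colimit is $R$; each successive cofibre $\Gamma_{\cV_n}R/\Gamma_{\cV_{n-1}}R$ is built, via the Koszul generators $R\mm\fp_n$, from the local cohomology objects $\Gamma_{\fp_n}R$ with $\fp_n$ a minimal prime of $\cV_n \setminus \cV_{n-1}$.

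The principal obstacle lies precisely in this layer analysis: one must verify that each cofibre genuinely lies in $\Loc_R(\Gamma_{\fp_n}R)$, which requires a careful comparison between the Koszul objects $R\mm\fr$ and the smashing objects $\Gamma_{\fp}R$ under specialization. It is exactly here that the Noetherian hypothesis is indispensable, both for the existence of minimal primes at each stage of the induction and for ensuring that the filtration exhausts $\Spec^h(\pi_*R)$ so that the colimit of the tower really recovers $R$.
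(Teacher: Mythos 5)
The paper does not actually prove this statement; it is quoted from Benson--Iyengar--Krause (\cite[Theorem 7.2]{bik11} and \cite[Remark 8.8]{bik12}), so your proposal has to stand on its own. Your formal reductions are correct and are indeed how the cited proof is organized: the ``in particular'' clause is immediate, the tensor-absorption argument correctly reduces both displayed identities to the single containment $R \in \Loc_R(\Gamma_{\fp}R \mid \fp \in \Spec^h(\pi_*R))$ (for the colocalizing version one should say explicitly that one applies $\Hom_R(-,M)$, which turns that containment into $M \simeq \Hom_R(R,M) \in \Coloc_R(\Lambda^{\fp}M \mid \fp)$ using $\Lambda^{\fp}M \simeq \Hom_R(\Gamma_{\fp}R,M)$; it is this observation, not an abstract duality, that makes the second identity ``dual'').

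The genuine gap is the step you yourself flag as ``the principal obstacle'': that containment \emph{is} the content of the theorem, and the layer analysis is not a routine verification you may defer. Concretely, what is missing is (i) the identification $L_{\cW}\Gamma_{\cV}M \simeq \Gamma_{\fp}M$ when $\cV \setminus \cW = \{\fp\}$, which is what ties the Koszul-generated functors $\Gamma_{\cV}$ to the objects $\Gamma_{\fp}R$; (ii) the fact that your height filtration has layers $\cV_n \setminus \cV_{n-1}$ consisting of a possibly infinite antichain of primes, so the cofibre $L_{\cV_{n-1}}\Gamma_{\cV_n}R$ is not controlled by a single triangle --- one must write $\cV_n$ as a filtered union of $\cV_{n-1} \cup F$ over finite subsets $F$ of the height-$n$ primes, use that $\Gamma_{(-)}$ commutes with such filtered unions (this is also what justifies $\colim_n \Gamma_{\cV_n}R \simeq R$, which you assert without proof), and then adjoin the primes of $F$ one at a time to invoke (i); and (iii) the observation that Krull's height theorem guarantees every prime has finite height, so the $\omega$-indexed filtration exhausts $\Spec^h(\pi_*R)$ even when the Krull dimension is infinite. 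Without (i) and (ii) the assertion that each cofibre lies in $\Loc_R(\Gamma_{\fp}R \mid \fp \in \cV_n\setminus\cV_{n-1})$ is unsupported, so the argument as written does not yet prove the theorem.
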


We will use the fact that both support and cosupport detect zero objects implicitly throughout the remainder of this paper. 

Support theory is used to give a classification of the localizing subcategories of $\Mod_R$. We note that there is a pair of natural morphisms

\begin{equation}\label{eq:locsubcat}
\begin{Bmatrix}
\text{Localizing subcategories} \\
\text{of $\Mod_{R}$} 
\end{Bmatrix} 
\xymatrix@C=2pc{ \ar@<0.75ex>[r]^{\supp_R} &  \ar@<0.75ex>[l]^{\supp^{-1}_R}}
\begin{Bmatrix}
\text{Subsets of $\Spec^h(\pi_*R)$} 
\end{Bmatrix},
\end{equation}
where for a localizing subcategory $\mathcal{S} \subseteq \Mod_R$ and a subset $\mathcal{U} \subseteq \Spec^h(\pi_*R)$ we set 
\[
\supp_R(\mathcal{S}) = \bigcup_{M \in \mathcal{S}} \supp_R(M) \quad \text{ and } \quad \supp_R^{-1}(\mathcal{U}) = \{ M \in \Mod_R \mid \supp_R(M) \subseteq \mathcal{U}\}.
\]
Benson, Iyengar, and Krause \cite{bik11} exhibit conditions to ensure that the maps in \eqref{eq:locsubcat} are bijections. To the state their definition, recall that a localizing subcategory $\cD$ of $\Mod_R$ is said to be minimal if $\cD$ has no proper localizing subcategories.

\begin{defn}
Let $R$ be a Noetherian commutative ring spectrum. We say that $\Mod_R$ is (canonically) stratified if for each $\fp \in \Spec^h(\pi_*R)$, the localizing subcategory $\Gamma_{\fp}\Mod_R$ is minimal. 
\end{defn}

\begin{rem}
  This is slightly different from the definition for triangulated categories given in \cite{bik11}, where they require that the local to global principle holds and that $\Gamma_{\fp}\Mod_R$ is minimal or non-zero. As seen in \Cref{thm:local_global} the local to global principle always holds for $\Mod_R$ when $R$ is Noetherian, while in \cite[Proposition 2.13(1)]{bchv1} we show that $\Gamma_{\fp}\Mod_R$ is always non-zero. Since we will only consider the canonical action of $\pi_*R$ on $\Mod_R$, we will from now on omit the adjective canonical when referring to stratification.
\end{rem}

The next result demonstrates the usefulness of this definition, see \cite[Theorem 4.2]{bik11} for the proof.
\begin{thm}[Benson--Iyengar--Krause]
  If $\Mod_R$ is stratified, then the maps \eqref{eq:locsubcat} are inclusion preserving bijections. 
\end{thm}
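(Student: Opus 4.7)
The plan is to verify that the maps $\supp_R$ and $\supp_R^{-1}$ from \eqref{eq:locsubcat} are mutually inverse, with inclusion preservation being immediate from the definitions. We need to check both composites.

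First I would show that $\supp_R^{-1}\supp_R(\cS) = \cS$ for any localizing subcategory $\cS \subseteq \Mod_R$. The inclusion $\cS \subseteq \supp_R^{-1}\supp_R(\cS)$ is obvious, so suppose $N \in \Mod_R$ satisfies $\supp_R(N) \subseteq \supp_R(\cS)$. By the local-to-global principle (\Cref{thm:local_global}), it suffices to show $\Gamma_{\fp}N \in \cS$ for each $\fp \in \Spec^h(\pi_*R)$. If $\fp \notin \supp_R(N)$, then $\Gamma_{\fp}N \simeq 0$ and there is nothing to prove. Otherwise $\fp \in \supp_R(\cS)$, so we may choose $M_{\fp} \in \cS$ with $\Gamma_{\fp}M_{\fp} \not\simeq 0$. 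Both $\Gamma_{\fp}N$ and $\Gamma_{\fp}M_{\fp}$ are non-zero objects of the minimal localizing subcategory $\Gamma_{\fp}\Mod_R$, hence generate the same localizing subcategory; in particular $\Gamma_{\fp}N \in \Loc_R(\Gamma_{\fp}M_{\fp})$. Since $\Gamma_{\fp}$ is smashing, $\Gamma_{\fp}M_{\fp} \simeq \Gamma_{\fp}R \otimes M_{\fp} \in \Loc_R(\cS) = \cS$, so $\Gamma_{\fp}N \in \cS$ as required.

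Next I would show that $\supp_R\supp_R^{-1}(\cU) = \cU$ for any $\cU \subseteq \Spec^h(\pi_*R)$. Again one inclusion is formal. For the other, fix $\fp \in \cU$; the natural candidate to witness $\fp$ is $\Gamma_{\fp}R$ itself. The key input is that $\supp_R(\Gamma_{\fp}R) = \{\fp\}$: indeed, applying $\Gamma_{\fq}$ for $\fq \neq \fp$ annihilates $\Gamma_{\fp}R$ by the standard orthogonality of the $\Gamma_{\fp}$'s on a stratified category, while $\Gamma_{\fp}\Gamma_{\fp}R \not\simeq 0$ follows from $\Gamma_{\fp}\Mod_R$ being non-zero (cf.\ the remark after the definition of stratification, citing \cite[Proposition 2.13(1)]{bchv1}). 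Hence $\Gamma_{\fp}R \in \supp_R^{-1}(\cU)$ with $\fp$ in its support, which gives the desired inclusion.

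The main obstacle, and the place where the stratification hypothesis is truly used, is the first composite: without minimality of $\Gamma_{\fp}\Mod_R$ there is no reason for $\Gamma_{\fp}N$ to lie in $\Loc_R(\Gamma_{\fp}M_{\fp})$, so one would only recover that $\cS$ is determined by its local pieces $\Gamma_{\fp}\cS \subseteq \Gamma_{\fp}\Mod_R$, not by the set $\supp_R(\cS)$. The second composite, by contrast, requires only the existence and correct support of the objects $\Gamma_{\fp}R$, which holds in any Noetherian setting. Finally, inclusion preservation is clear on both sides from the formulas defining $\supp_R$ and $\supp_R^{-1}$.
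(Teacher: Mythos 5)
Your proof is correct and is essentially the argument of Benson--Iyengar--Krause's \cite[Theorem 4.2]{bik11}, which the paper simply cites rather than reproving: the local-to-global principle reduces membership in a localizing subcategory to its local pieces $\Gamma_{\fp}(-)$, minimality of each $\Gamma_{\fp}\Mod_R$ shows those pieces are detected by the support, and the objects $\Gamma_{\fp}R$ (with $\supp_R(\Gamma_{\fp}R)=\{\fp\}$ by \cite[Proposition 2.13(1)]{bchv1}) witness surjectivity of $\supp_R$. The only cosmetic quibble is that the orthogonality $\Gamma_{\fq}\Gamma_{\fp}R\simeq 0$ for $\fq\neq\fp$ holds for any Noetherian $R$ and does not need stratification, exactly as you correctly observe one paragraph later.
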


As a consequence, the telescope conjecture hold for $\Mod_R$, see \cite[Theorem 6.3]{bik11} and \cite[Theorem 11.13]{bik_finitegroups}. 

\begin{thm}[Benson--Iyengar--Krause]\label{thm:tel_conj}
	Suppose $\Mod_R$ is stratified by $\pi_*R$. Let $\cal{U}$ be a localizing subcategory of $\Mod_{R}$. Then the following conditions are equivalent. 
	\begin{enumerate}
		\item The localizing subcategory $\cal{U}$ is smashing, i.e., the associated localizing endofunctor on $\Mod_R$ preserves colimits.
		\item The localizing subcategory $\cal{U}$ is generated by compact objects in $\Mod_R$. 
		\item The support of $\cal{U}$ is specialization closed, i.e, it is a union of Zariski closed subsets of $\Spec^h(\pi_*R)$. 
	\end{enumerate}
\end{thm}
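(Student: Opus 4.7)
The plan is to use the stratification bijection between localizing subcategories of $\Mod_R$ and arbitrary subsets of $\Spec^h(\pi_*R)$, and show that all three conditions translate to the same class of subsets, namely the specialization closed ones. Writing $W := \supp_R(\cU)$, stratification gives $\cU = \supp_R^{-1}(W)$, so the problem reduces to understanding which subsets $W$ arise from compactly generated, respectively smashing, subcategories.

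The implication (3) $\Rightarrow$ (2) proceeds via Koszul compact generators. Writing $W = \bigcup_\alpha V(\fp_\alpha)$ as a union of Zariski closed sets, each Koszul object $R \mm \fp_\alpha$ is compact in $\Mod_R$ and, by standard properties of Koszul constructions over Noetherian ring spectra, satisfies $\supp_R(R\mm\fp_\alpha) = V(\fp_\alpha)$. Stratification then identifies
\[
\Loc_R(R\mm\fp_\alpha \mid \alpha) = \supp_R^{-1}\Bigl(\bigcup_\alpha V(\fp_\alpha)\Bigr) = \supp_R^{-1}(W) = \cU,
\]
so $\cU$ is compactly generated. For (2) $\Rightarrow$ (1), I would invoke Neeman's classical argument: if $\cU = \Loc_R(\cC)$ with $\cC$ a set of compact objects, then $\cU^\perp$ is closed under arbitrary coproducts, hence the associated localization preserves colimits and $\cU$ is smashing.

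The remaining implication (1) $\Rightarrow$ (3) is the essential content, equivalent to a telescope conjecture statement in this setting. Assume $\cU$ is smashing, with acyclization triangle $\Gamma R \to R \to LR$; both $\cU = \Loc_R(\Gamma R)$ and $\cU^\perp = \Loc_R(LR)$ are localizing tensor-ideals, and $LR$ inherits the structure of a commutative ring spectrum. I would show the complement $W^c$ is generalization closed by observing that $\fp \in W^c$ if and only if $\Gamma_\fp R$ is $L$-local, i.e.\ $L\Gamma_\fp R \simeq \Gamma_\fp R \otimes LR \simeq \Gamma_\fp R$. Given $\fp \subseteq \fq$ with $\fq \in W^c$, one transfers $L$-locality across the specialization by working inside the subcategory $\Gamma_{\cV(\fp)} \Mod_R$, where the local-to-global principle together with stratification reduces the question to the behavior of $LR$ at the residue field data associated to the chain $\fp \subseteq \fq$. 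The ring structure on $LR$ combined with the Koszul presentations of $\Gamma_\fp R$ then forces $L\Gamma_\fp R \neq 0$ whenever some $\fq \supseteq \fp$ belongs to $W^c$.

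The main obstacle will be the last step: smashing subcategories are closed under tensor products and arbitrary coproducts, but the modules $\Gamma_\fp R$ at distinct primes are mutually tensor-orthogonal, so one cannot build $\Gamma_\fq R$ directly from $\Gamma_\fp R$ by tensor or colimit operations. The tension must be resolved by using the ring structure on $LR$ to propagate $L$-locality across comparable primes, which is where the genuine content of the telescope conjecture enters. Once specialization closure of $W$ is established, the three conditions collapse via the first two paragraphs.
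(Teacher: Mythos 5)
First, note that the paper does not prove this theorem at all: it is quoted from Benson--Iyengar--Krause (\cite[Theorem 6.3]{bik11}, \cite[Theorem 11.13]{bik_finitegroups}) and used as a black box, so there is no in-paper argument to match your proposal against. Judged on its own terms, your reductions $(3)\Rightarrow(2)$ via Koszul objects (using $\supp_R(R\mm\fp)=\cV(\fp)$ and the stratification bijection) and $(2)\Rightarrow(1)$ via the Miller--Neeman argument are correct and standard. The problem is that your cycle closes with $(1)\Rightarrow(3)$, and that implication --- which is precisely the telescope conjecture and hence the entire content of the theorem --- is not proved in your write-up; you yourself flag it as ``the main obstacle'' without resolving it.

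Concretely, here is why the mechanism you gesture at does not suffice. Granting that $LR$ is a commutative $R$-algebra and that under stratification the tensor formula gives $\supp_R(\Gamma R)\cap\supp_R(LR)=\supp_R(\Gamma R\otimes LR)=\varnothing$ together with $\supp_R(\Gamma R)\cup\supp_R(LR)=\Spec^h(\pi_*R)$, your claim ``$W$ is specialization closed'' is exactly the claim that $\supp_R(LR)=W^c$ is generalization closed. By base change (\Cref{thm:suppbasechange}(2)), $\supp_R(LR)=\res_f(\Spec^h(\pi_*LR))$ for $f\colon R\to LR$, so you are asserting that the image of $\res_f$ is generalization closed. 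For an arbitrary map of commutative ring spectra this is false --- already $H\Z\to H\F_p$ has image the closed point $\{(p)\}$ --- so ``the ring structure on $LR$'' cannot by itself force the conclusion. What must be used is the idempotence $LR\otimes_R LR\simeq LR$ of a smashing localization together with stratification, and your proposal never brings the idempotence into play; the sentence asserting that Koszul presentations ``force $L\Gamma_\fp R\neq 0$'' is a restatement of the goal, not an argument. The actual Benson--Iyengar--Krause proof of this implication runs through their orthogonality theory for stratified categories (Hom-vanishing against compact sources is detected by supports), not through the geometry of $\pi_*(LR)$. As it stands, the equivalence of the three conditions is therefore not established.
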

\subsection{Base-change and conservative maps of ring spectra}

A remarkable result proven by Chouinard~\cite{chouinard} is that the projectivity of a module over the group ring of a finite group $G$ can be tested at all elementary abelian $p$-subgroup of $G$ combined. This can be interpreted as a way to detect trivial objects when working in the stable module category of $G$. In this section, we study homotopical analogues of this result in the setting of commutative ring spectra. We do so by expressing the detection of trivial objects as a conservativity property of certain functors that we introduce hereunder.

Let $f\colon R \to S$ be a morphism of commutative ring spectra and view $S$ as an $R$-module via $f$. Forgetting along $f$ induces a restriction functor $f_* = \Res\colon \Mod_S \to \Mod_R$ which admits both a left adjoint $f^*$ and a right adjoint $f^!$, given by induction (or extension of scalars) along $f$,
\[
\xymatrix{f^* = \Ind = S \otimes_R (-)\colon \Mod_R \ar[r] & \Mod_S,}
\]
and coinduction, defined as 
\[
\xymatrix{f^! = \Coind = \Hom_R(S,-)\colon \Mod_R \ar[r] & \Mod_S.}
\] 
Note that restriction detects equivalences, i.e., a map $\alpha \colon N_1 \to N_2$ between $S$-modules is an equivalence if and only if $f_*(\alpha)$ is. It follows that $f^*$ or $f^!$ are conservative if and only if the composites $f_*f^*$ or $f_*f^!$ are, respectively. We capture this property by introducing the following terminology:

\begin{defn}
A map $f\colon R \to S$ of commutative ring spectra is called conservative (resp.~coconservative) if the associated induction functor $f^*\colon \Mod_R \to \Mod_S$ (resp.~coinduction $f^!$) is conservative. If a map is both conservative and coconservative, it will be called biconservative. 
\end{defn}

In \cite{bchv1} we studied the base-change properties of support and cosupport along a morphism of ring spectra $f \colon R \to S$. We single out the following results, where we let 
\[
\xymatrix{\res_f \colon \Spec^h(\pi_*S) \ar[r] & \Spec^h(\pi_*R)} 
\]
denote the map induced by $f$ on $\Spec^h(\pi_*(-))$.  The first two items of the next result are proven in \cite[Corollary 3.9 and Proposition 3.12]{bchv1}, while the third one strengthens \cite[Lemma 3.11]{bchv1}.

\begin{thm}\label{thm:suppbasechange}
Let $f\colon R \to S$ be a map of commutative ring spectra, $M \in \Mod_R$ and $N \in \Mod_S$, then we have:
  \begin{enumerate}
    \item $\res_f\supp_S(f^* M) \subseteq \supp_R(M)$, and equality holds if $f$ is conservative.
    \item $\supp_R(f_* N) = \res_f\supp_S(N)$ and $\cosupp_R(f_*N) = \res_f\cosupp_S(N)$.
    \item $\res_f \cosupp_S(f^! M) \subseteq \cosupp_R(M)$, and equality holds if $f$ is coconservative. 
  \end{enumerate}
\end{thm}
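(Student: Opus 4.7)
Parts (1) and (2) are recorded in \cite[Corollary 3.9 and Proposition 3.12]{bchv1}, so my plan concentrates on the strengthened statement (3). The essential idea is to translate the $S$-module $\Lambda^\fq_S f^! M$ into an $R$-module internal Hom, and then to exploit that its first argument lies in the essential image of $\Gamma_\fp$ for $\fp := \res_f \fq$.

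First, I would combine the definition $\Lambda^\fq_S = \Hom_S(\Gamma^S_\fq S, -)$ with the $(f_*, f^!)$-adjunction applied to internal Homs to produce a natural equivalence of $R$-modules
\[
f_* \Lambda^\fq_S f^! M \;\simeq\; \Hom_R(f_* \Gamma^S_\fq S, M).
\]
Conservativity of restriction means that the vanishing of $\Lambda^\fq_S f^! M$ in $\Mod_S$ is tested by the right-hand side. Setting $\fp = \res_f \fq$, part (2) together with $\supp_S(\Gamma^S_\fq S) = \{\fq\}$ forces $\supp_R(f_* \Gamma^S_\fq S) \subseteq \{\fp\}$; \Cref{thm:local_global} then produces $f_* \Gamma^S_\fq S \simeq \Gamma_\fp(f_* \Gamma^S_\fq S)$, so that $f_* \Gamma^S_\fq S$ lies in the essential image of $\Gamma_\fp$. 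Via the $(\Gamma_\fp, \Lambda^\fp)$-adjunction this yields the key equivalence
\[
\Hom_R(f_* \Gamma^S_\fq S, M) \;\simeq\; \Hom_R(f_* \Gamma^S_\fq S, \Lambda^\fp M).
\]

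The containment in (3) is then immediate: if $\Lambda^\fp M \simeq 0$ then $\Lambda^\fq_S f^! M \simeq 0$, so $\fq \in \cosupp_S(f^! M)$ implies $\res_f \fq \in \cosupp_R M$. For the converse under coconservativity, I would start with $\fp \in \cosupp_R M$, i.e.\ $\Lambda^\fp M \not\simeq 0$; since $f^!$ is conservative, $f^! \Lambda^\fp M$ is nonzero, and the local-to-global principle supplies some $\fq' \in \cosupp_S(f^! \Lambda^\fp M)$. The already-proved containment applied to $\Lambda^\fp M$, whose cosupport is exactly $\{\fp\}$, forces $\res_f \fq' = \fp$. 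Finally, the key equivalence specialised to the replacement $M' = \Lambda^\fp M$ shows $\Lambda^{\fq'}_S f^! \Lambda^\fp M \simeq \Lambda^{\fq'}_S f^! M$, so $\fq'$ lies in $\cosupp_S(f^! M)$ and lifts $\fp$.

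The most delicate step will be making the equivalence $f_* \Lambda^\fq_S f^! M \simeq \Hom_R(f_* \Gamma^S_\fq S, M)$ rigorous as a natural equivalence of $R$-modules (rather than merely of underlying spectra), together with the bookkeeping needed to identify $f_* \Gamma^S_\fq S$ with its image under $\Gamma_\fp$; once these are in place, the remaining deductions from the local-to-global principle and the $(\Gamma_\fp, \Lambda^\fp)$-adjunction are formal.
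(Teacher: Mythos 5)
Your argument is correct, and for part (3) it takes a genuinely different route from the paper's in its key mechanism. The paper pushes the torsion object forward along induction: it rewrites $f^!\Lambda^{\fp}M \simeq \Hom_S(f^*\Gamma_{\fp}R, f^!M)$ and then invokes the base-change identity $f^*\Gamma_{\fp}R \simeq L_{\widetilde{\cW}}\Gamma_{\widetilde{\cV}}f^*R$ from \cite[Corollary 3.8]{bchv1} to identify this with $\Lambda^{\widetilde{\cV}}\Delta^{\widetilde{\cW}}f^!M$, whose cosupport is then computed in one stroke to be $\res_f^{-1}(\fp)\cap\cosupp_S(f^!M)$. You instead pull the torsion object back along restriction: your key equivalence $f_*\Lambda^{\fq}f^!M\simeq\Hom_R(f_*\Gamma_{\fq}S,\Lambda^{\fp}M)$ needs only part (2) to place $f_*\Gamma_{\fq}S$ in $\Gamma_{\fp}\Mod_R$, together with the $(\Gamma_{\fp},\Lambda^{\fp})$-adjunction, and you then recover the fact that the prime $\fq'$ lies over $\fp$ from the already-established unconditional containment applied to $\Lambda^{\fp}M$ rather than from an explicit cosupport formula. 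Both proofs launch the converse identically (coconservativity gives $f^!\Lambda^{\fp}M\not\simeq 0$, and the local-to-global principle supplies a prime in its cosupport); what yours buys is independence from the base-change formula for the functors $\Gamma_{\cV}$ and $L_{\cW}$, at the cost of routing everything through restriction and its conservativity. The points you flag as delicate are indeed the only ones needing care, and they are all standard: the natural $R$-module identity $f_*\Hom_S(N,f^!M)\simeq\Hom_R(f_*N,M)$ follows from the projection formula (it is the mate of the identity $f^!\Hom_R(Y,M)\simeq\Hom_S(f^*Y,f^!M)$ that the paper cites from \cite{bds_wirth}), while the facts that an object with support contained in $\{\fp\}$ lies in $\Gamma_{\fp}\Mod_R$ and that $\cosupp_R(\Lambda^{\fp}M)\subseteq\{\fp\}$ are basic consequences of the local-to-global principle and the idempotency of $\Gamma_{\fp}$ and $\Lambda^{\fp}$.
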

\begin{proof}
In light of the references provided above, it remains to verify the last claim. The inclusion ``$\subseteq$'' holds unconditionally, so it suffices to establish ``$\supseteq$'' when $f$ is coconservative. Let $M \in \Mod_R$ and $\fp \in \cosupp_R(M)$. By adjunction, there are equivalences of $S$-modules
\[
0 \not\simeq f^!\Lambda^{\fp}M \simeq f^!\Hom_R(R,\Lambda^{\fp}M) \simeq f^!\Hom_R(\Gamma_{\fp}R,M) \simeq \Hom_S(f^*\Gamma_{\fp}R,f^!M).
\]
The last equivalence can be found, for example, in Proposition 2.15 of \cite{bds_wirth}, see their Equation (2.19). Let $\cV$ and $\cW$ be specialization closed subsets of $\Spec^h(\pi_*R)$ such that $\cV \setminus \cW = \{\fp\}$ and write $\widetilde{\cV} = \res_f^{-1}(\cV)$ and $\widetilde{\cW} = \res_f^{-1}(\cW)$; note in particular that $\widetilde{\cV} \setminus \widetilde{\cW} = \res_f^{-1}(\fp)$. It then follows from Corollary 3.8 in \cite{bchv1} and adjunction that
\[
\Hom_S(f^*\Gamma_{\fp}R,f^!M) \simeq \Hom_S(L_{\widetilde{\cW}}\Gamma_{\widetilde{\cV}}f^*R,f^!M) \simeq  \Hom_S(S,\Lambda^{\widetilde{\cV}}\Delta^{\widetilde{\cW}} f^!M) \simeq \Lambda^{\widetilde{\cV}}\Delta^{\widetilde{\cW}} f^!M.
\]
By Lemma 3.4 of \cite{bchv1}, the cosupport of this expression can thus be computed to be
\[
\varnothing \neq \cosupp_S(f^!\Lambda^{\fp}M) = \cosupp_S(\Lambda^{\widetilde{\cV}}\Delta^{\widetilde{\cW}} f^!M) = \res_f^{-1}(\fp) \cap \cosupp_S(f^!M).
\]
This implies that $\fp \in \res_f\cosupp_S(f^!M)$ as desired.
\end{proof}

\begin{lem}\label{lem:ressurj}
Fix a prime ideal $\fp \in \Spec^h(\pi_*R)$. If $f\colon R \to S$ is a map of commutative ring spectra such that induction $f^*\colon \Mod_R \to \Mod_S$ considered as a functor on $\Gamma_{\fp}\Mod_R$ is conservative (resp. coinduction as a functor on $\Lambda^{\fp}\Mod_R$ is conservative), then $\fp$ is in the image of $\res_f\colon \Spec^h(\pi_*S) \to \Spec^h(\pi_*R)$.
\end{lem}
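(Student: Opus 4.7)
The plan is to prove the contrapositive in both cases, by exhibiting nonzero objects in $\Gamma_\fp\Mod_R$ and $\Lambda^\fp\Mod_R$ whose support (respectively cosupport) equals the singleton $\{\fp\}$, and then using the base-change containments from \Cref{thm:suppbasechange} to force $f^*$ (respectively $f^!$) to annihilate them whenever $\fp \notin \res_f(\Spec^h(\pi_*S))$.

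Concretely, first I would treat the conservativity version. Consider the module $\Gamma_\fp R \in \Gamma_\fp\Mod_R$; by the non-triviality of $\Gamma_\fp\Mod_R$ cited in the remark after the stratification definition, this is a nonzero object, and a standard computation (essentially the definition of local cohomology via $\Gamma_{\cV(\fp)}L_{\cZ(\fp)}$) gives $\supp_R(\Gamma_\fp R) = \{\fp\}$. Assume for contradiction that $\fp$ is not in the image of $\res_f$, so $\res_f^{-1}(\{\fp\}) = \varnothing$. Then by \Cref{thm:suppbasechange}(1),
\[
\supp_S(f^*\Gamma_\fp R) \subseteq \res_f^{-1}\supp_R(\Gamma_\fp R) = \varnothing,
\]
so $f^*\Gamma_\fp R \simeq 0$ by \Cref{thm:local_global}. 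This contradicts the hypothesis that $f^*$ restricted to $\Gamma_\fp\Mod_R$ is conservative.

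The coconservative case is entirely analogous, substituting $\Lambda^\fp R \in \Lambda^\fp\Mod_R$ for $\Gamma_\fp R$. Again $\Lambda^\fp R$ is nonzero (it is the right adjoint applied to a nonzero local cohomology object, and by local duality its cosupport equals $\{\fp\}$, which one can verify directly from the construction of $\Lambda^\fp$ together with \Cref{thm:local_global}). Assuming $\res_f^{-1}(\{\fp\})=\varnothing$, \Cref{thm:suppbasechange}(3) now yields
\[
\cosupp_S(f^!\Lambda^\fp R) \subseteq \res_f^{-1}\cosupp_R(\Lambda^\fp R) = \varnothing,
\]
whence $f^!\Lambda^\fp R \simeq 0$ by \Cref{thm:local_global}, contradicting coconservativity of $f^!$ on $\Lambda^\fp\Mod_R$.

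The only genuinely non-routine input is the identification $\supp_R(\Gamma_\fp R)=\{\fp\}=\cosupp_R(\Lambda^\fp R)$; these are standard facts about the Benson--Iyengar--Krause functors and follow from the defining identities $\Gamma_\fp\Gamma_\fp \simeq \Gamma_\fp$ and $\Lambda^\fp\Lambda^\fp \simeq \Lambda^\fp$ together with the characterizations already used in the proof of \Cref{thm:suppbasechange}, so I would simply cite them from \cite{bik12} or \cite{bchv1}. Everything else reduces to a one-line application of the base-change inclusions and the local-to-global principle.
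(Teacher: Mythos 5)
Your argument is correct and follows essentially the same route as the paper: test conservativity on $\Gamma_{\fp}R$ (resp.\ $\Lambda^{\fp}R$), use $\supp_R(\Gamma_{\fp}R)=\{\fp\}=\cosupp_R(\Lambda^{\fp}R)$ together with the base-change inclusions of \Cref{thm:suppbasechange}(1) and (3), and conclude via the detection of zero objects by (co)support. The only cosmetic difference is that the paper applies the detection principle on the $R$-side, first pushing forward along the conservative functor $f_*$ using \Cref{thm:suppbasechange}(2), whereas you apply it directly to the $S$-module $f^*\Gamma_{\fp}R$; both are valid in the standing Noetherian setting.
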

\begin{proof}
Using \Cref{thm:suppbasechange}(1) and (2), we get 
\[
\supp_R(f_*f^* \Gamma_{\fp}R) = \res_f \supp_S (f^*\Gamma_{\fp}R) \subseteq \supp_R(\Gamma_{\fp}R) = \{\fp\}.
\]
Now suppose that $\fp$ is not in the image of $\res_f$, then $f_*f^* \Gamma_{\fp}R \simeq 0$. Therefore, $f^* \Gamma_{\fp}R \simeq 0$, so $f^* \colon \Gamma_{\fp}\Mod_R \to \Mod_S$ is not conservative. The version for coinduction follows similarly from \Cref{thm:suppbasechange}(3).
\end{proof}

Under the assumption that $\Gamma_{\fp}\Mod_R$ is a minimal localizing subcategory of $\Mod_R$, the converse holds as well, as we will show after some preparation in \Cref{lem:chouinardiff} below.

\begin{lem}\label{lem:p-tensor_hom_(co)supp}
	Let $\fp\in \Spec^h(\pi_*R)$ such that $\Gamma_{\fp}\Mod_R$ is minimal. Then, for all $M\in \Gamma_{\fp}\Mod_R$ and all $N\in \Mod_R$,
	\[
	\supp_R(M\otimes N)=\supp_R (M)\cap \supp_R (N).
	\]
Dually, for all $M\in \Mod_R$ and all $N\in \Lambda^{\fp}\Mod_R$
	\[
	\cosupp_R(\Hom_R(M,N))=\supp_R (M)\cap \cosupp_R (N).
	\]
\end{lem}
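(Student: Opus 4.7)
The plan is to exploit the minimality hypothesis on $\Gamma_{\fp}\Mod_R$ by constructing an auxiliary localizing subcategory whose only possibilities are $0$ or the whole category; the nontrivial inclusion in both identities then follows by eliminating one of the options.

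For the support identity, the inclusion $\supp_R(M\otimes N)\subseteq \supp_R(M)\cap \supp_R(N)$ is immediate from the smashing property of $\Gamma_{\fp}$, which gives $\Gamma_{\fp}(M\otimes N)\simeq \Gamma_{\fp} M\otimes N\simeq M\otimes \Gamma_{\fp} N$, so the left-hand side forces both factors to be nonzero. For the reverse inclusion, I would assume $\fp\in \supp_R(M)\cap \supp_R(N)$, which gives $M\not\simeq 0$ (recall $M\in \Gamma_{\fp}\Mod_R$) and $\Gamma_{\fp}N\not\simeq 0$. Then introduce
\[
\mathcal{C}=\{X\in \Gamma_{\fp}\Mod_R \mid M\otimes X\simeq 0\},
\]
which is localizing in $\Gamma_{\fp}\Mod_R$ because $M\otimes(-)$ preserves colimits and triangles. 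Minimality forces $\mathcal{C}=0$ or $\mathcal{C}=\Gamma_{\fp}\Mod_R$; the latter is excluded since $\Gamma_{\fp} R\in \Gamma_{\fp}\Mod_R$ but $M\otimes \Gamma_{\fp} R\simeq M\not\simeq 0$. Hence $\Gamma_{\fp}N\notin \mathcal{C}$, yielding $M\otimes N\simeq M\otimes \Gamma_{\fp} N\not\simeq 0$.

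For the cosupport identity, I would first record the adjoint identity $\Hom_R(M,\Lambda^{\fp}Y)\simeq \Lambda^{\fp}\Hom_R(M,Y)\simeq \Hom_R(\Gamma_{\fp}M,Y)$, which follows from $\Lambda^{\fp}\simeq \Hom_R(\Gamma_{\fp}R,-)$ combined with tensor-hom adjunction. Applied to $Y=N=\Lambda^{\fp}N$, it shows that $\Hom_R(M,N)$ already lies in $\Lambda^{\fp}\Mod_R$, so its cosupport is contained in $\{\fp\}$; it also handles the degenerate cases, since $\fp\notin \supp_R(M)$ gives $\Gamma_{\fp}M\simeq 0$ and $\fp\notin \cosupp_R(N)$ gives $N=\Lambda^{\fp}N\simeq 0$, both forcing $\Hom_R(M,N)\simeq 0$. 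In the remaining case, where $\fp\in \supp_R(M)\cap \cosupp_R(N)$, I would consider
\[
\mathcal{D}=\{X\in \Gamma_{\fp}\Mod_R \mid \Hom_R(X,N)\simeq 0\},
\]
which is localizing because $\Hom_R(-,N)$ sends colimits to limits. Minimality again produces a dichotomy, and $\mathcal{D}=\Gamma_{\fp}\Mod_R$ is ruled out because $\Gamma_{\fp}R\in \mathcal{D}$ would force $\Lambda^{\fp}N\simeq \Hom_R(\Gamma_{\fp}R,N)\simeq 0$, contradicting $\fp\in \cosupp_R(N)$. Therefore $\Gamma_{\fp}M\notin \mathcal{D}$, whence $\Hom_R(M,N)\simeq \Hom_R(\Gamma_{\fp}M,N)\not\simeq 0$ and $\fp\in \cosupp_R\Hom_R(M,N)$.

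The main conceptual step in both halves is the same minimality dichotomy; beyond that the arguments rely only on the smashing of $\Gamma_{\fp}$ and the representability $\Lambda^{\fp}\simeq \Hom_R(\Gamma_{\fp}R,-)$, so I do not expect any substantive obstacle. The most delicate point to verify carefully is that $\mathcal{C}$ and $\mathcal{D}$ are genuine localizing subcategories of $\Gamma_{\fp}\Mod_R$, which reduces to the closure of $\Gamma_{\fp}\Mod_R$ under colimits together with the continuity of $M\otimes(-)$ and the contravariant continuity of $\Hom_R(-,N)$.
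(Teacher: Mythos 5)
Your proof is correct and takes essentially the same route as the paper, which simply defers both equalities to the minimality-dichotomy arguments in the proofs of Benson--Iyengar--Krause (Theorem 7.3 of \cite{bik11} and Theorem 9.5 of \cite{bik12}); your localizing subcategories $\mathcal{C}$ and $\mathcal{D}$ are precisely the kernels whose vanishing those arguments establish. The auxiliary facts you invoke --- that $\Gamma_{\fp}\Mod_R$ is closed under colimits, that $\Lambda^{\fp}\simeq \Hom_R(\Gamma_{\fp}R,-)$ is idempotent, and that objects in its image have cosupport contained in $\{\fp\}$ --- are all standard and available in the cited sources, so no gap remains.
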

\begin{proof}
	Since $M\in\Gamma_{\fp}\Mod_R$, we have
	\[
	\supp_R(M\otimes N)\subseteq \supp_R (M)\cap \supp_R (N)\subseteq \{\fp\}.
	\]
	Under the minimality assumption on $\Gamma_{\fp}\Mod_R$, the same argument as in the proof of \cite[Theorem 7.3]{bik11} implies that if $\supp_R (M)\cap \supp_R (N)=\{\fp\}$ then $\supp_R(M\otimes N)=\{\fp\}$, which proves the first claim.
	
	Likewise, the second equality follows from the proof of \cite[Theorem 9.5]{bik12}.
\end{proof}

The next result is a local analogue of \cite[Proposition 3.14]{bchv1}.

\begin{prop}\label{prop:abstractsubgroupthm}
Let $f\colon R \to S$ be a morphism of Noetherian commutative ring spectra and suppose that $\fp\in\Spec^h(\pi_*R)$ is such that $\Gamma_{\fp}\Mod_R$ is minimal. Then, for all $M \in \Gamma_{\fp}\Mod_R$, 
\[
\supp_{S}(f^* M) = \res_f^{-1}\supp_{R}(M), \quad \text{while} \quad
 \cosupp_S(f^! N) = \res_f^{-1}\cosupp_R (N),
\]
for all $N\in\Lambda^{\fp}\Mod_R$.
\end{prop}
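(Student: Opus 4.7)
The plan is to prove both statements in parallel using dual arguments. In each case the strategy is to reduce to the non-zero case (where \Cref{thm:local_global} together with the defining property of $\Gamma_{\fp}\Mod_R$ forces $\supp_R(M)=\{\fp\}$ for any non-zero $M\in\Gamma_{\fp}\Mod_R$, and likewise $\cosupp_R(N)=\{\fp\}$ for any non-zero $N\in\Lambda^{\fp}\Mod_R$), then to get one inclusion from the base change results in \Cref{thm:suppbasechange}, and the reverse inclusion from a computation that invokes \Cref{lem:p-tensor_hom_(co)supp}.

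For the support equality, the containment $\supp_S(f^*M)\subseteq \res_f^{-1}(\fp)$ is immediate from \Cref{thm:suppbasechange}(1). For the reverse inclusion I would fix $\fq\in\res_f^{-1}(\fp)$ and use that $\Gamma_{\fq}$ is smashing over $S$ together with the projection formula for $f$ to establish the identity
\[
f_*(\Gamma_{\fq}f^*M)\simeq f_*\Gamma_{\fq}S\otimes_R M.
\]
By \Cref{thm:suppbasechange}(2) we have $\supp_R(f_*\Gamma_{\fq}S)=\{\fp\}$, and then minimality of $\Gamma_{\fp}\Mod_R$ combined with \Cref{lem:p-tensor_hom_(co)supp} yields $\supp_R(f_*\Gamma_{\fq}f^*M)=\{\fp\}\neq\varnothing$. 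Since restriction is conservative, this forces $\Gamma_{\fq}f^*M\not\simeq 0$, i.e., $\fq\in\supp_S(f^*M)$.

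The cosupport equality is dual. One inclusion is \Cref{thm:suppbasechange}(3), and for the reverse I would use the adjunction $f_*\dashv f^!$ together with the identification $\Lambda^{\fq}\simeq \Hom_S(\Gamma_{\fq}S,-)$ (coming from smashness of $\Gamma_{\fq}$) to rewrite
\[
f_*\Lambda^{\fq}f^!N\simeq \Hom_R(f_*\Gamma_{\fq}S,N).
\]
The Hom version of \Cref{lem:p-tensor_hom_(co)supp} then gives $\cosupp_R(f_*\Lambda^{\fq}f^!N)=\{\fp\}$, and conservativity of restriction yields $\fq\in\cosupp_S(f^!N)$.

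The main obstacle is pinning down the right base change identity in each case --- the projection formula for the support statement and the $f_*\dashv f^!$ adjunction for the cosupport statement --- so that the local computation is transferred from $S$ to $R$, where the minimality hypothesis and \Cref{lem:p-tensor_hom_(co)supp} are available. Once that reduction is made, the argument is a routine assembly of the base change results already recorded in \Cref{thm:suppbasechange}.
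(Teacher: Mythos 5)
Your proposal is correct and follows essentially the same route as the paper's proof: the projection formula reduces $\Gamma_{\fq}f^*M$ to $f_*\Gamma_{\fq}S\otimes_R M$, the adjunction $f_*\dashv f^!$ reduces $\Lambda^{\fq}f^!N$ to $\Hom_R(f_*\Gamma_{\fq}S,N)$, and in both cases \Cref{thm:suppbasechange} together with \Cref{lem:p-tensor_hom_(co)supp} and conservativity of restriction finishes the argument. No gaps.
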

\begin{proof}
Let $\fq\in \Spec^h(\pi_*S)$ with $\fp = \res_f(\fq)$. First, we can use the projection formula \cite[Lemma 2.2]{bchv1} to write 
\[
	f_*\Gamma_{\fq}f^* M \simeq f_*(\Gamma_{\fq}S \otimes_S f^* M) \simeq f_*(\Gamma_{\fq}S)\otimes_RM. 
	\]
By \Cref{thm:suppbasechange} and \cite[Proposition 2.13(1)]{bchv1} we have
\begin{equation}\label{eq:supp_formula}
	\supp_{R}(f_* \Gamma_{\fq}S) = \res_f\supp_S(\Gamma_{\fq}S)  = \{\res_f(\fq)\}= \{ \fp \}.
\end{equation}
Since $M\in\Gamma_{\fp}\Mod_R$, \cref{lem:p-tensor_hom_(co)supp} applies in this situation: 
\[
\supp_R(f_*(\Gamma_{\fq}S)\otimes_RM) = \supp_R(f_*\Gamma_{\fq}S) \cap \supp_R(M)\subseteq \{\fp \}.
\]
It follows that $f_*\Gamma_{\fq}f^* M\not\simeq0$ if and only if $\fp \in \supp_R(M)$. But $f_*$ is conservative, so we conclude that $\fq \in \supp_S(f^* M)$ if and only if $\fp \in \supp_R(M)$.

For the second claim, note that by adjunction there are equivalences of $R$-modules
  \begin{align*}
    \Lambda^{\fq}f^! N & \simeq \Hom_S(S,\Lambda^{\fq}f^! N) \\
    & \simeq \Hom_S(\Gamma_{\fq}S,f^! N) \\
    & \simeq \Hom_R(f_*\Gamma_{\fq}S,N).
  \end{align*}
But we are assuming that $N\in\Lambda^{\fp}\Mod_R$, so \cref{lem:p-tensor_hom_(co)supp} yields
\[
\cosupp_R(\Hom_R(f_*\Gamma_{\fq}S,N))=\supp_R(f_*\Gamma_{\fq}S) \cap \cosupp_R(N)= \{\fp\} \cap \cosupp_R(N),
\]
where we used \eqref{eq:supp_formula}. We conclude that $\fq\in\cosupp_S(f^!N)$ if and only if $ \fp \in\cosupp_R (N)$.
\end{proof}

\begin{lem}\label{lem:chouinardiff}
Fix a prime ideal $\fp \in \Spec^h(\pi_*R)$, let $f\colon R \to S$ be a map of Noetherian commutative ring spectra, and assume that $\Gamma_{\fp}\Mod_R$ is minimal. Then the following three conditions are equivalent:
	\begin{enumerate}
		\item Induction $f^*\colon \Mod_R \to \Mod_S$ restricted to $\Gamma_{\fp}\Mod_R$ is conservative.
		\item The prime ideal $\fp$ is in the image of 
\[
\xymatrix{\res_f\colon \Spec^h(\pi_*S) \ar[r] & \Spec^h(\pi_*R).}
\] 
		\item Coinduction $f^!\colon \Mod_R \to \Mod_S$ restricted to $\Lambda^{\fp}\Mod_R$ is conservative.
	\end{enumerate}
\end{lem}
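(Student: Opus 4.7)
The plan is to show $(1)\Leftrightarrow(2)$ and $(3)\Leftrightarrow(2)$, thereby establishing the three-way equivalence. Note that \Cref{lem:ressurj} already provides the implications $(1)\Rightarrow(2)$ and $(3)\Rightarrow(2)$ without requiring minimality, so the real content is in the converses $(2)\Rightarrow(1)$ and $(2)\Rightarrow(3)$, which will be where the minimality hypothesis on $\Gamma_{\fp}\Mod_R$ is used via \Cref{prop:abstractsubgroupthm}.

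For $(2)\Rightarrow(1)$, I would pick an object $M\in\Gamma_{\fp}\Mod_R$ with $f^*M\simeq 0$ and argue that $M\simeq 0$. By \Cref{prop:abstractsubgroupthm}, the hypothesis that $\Gamma_{\fp}\Mod_R$ is minimal gives
\[
\supp_S(f^*M)=\res_f^{-1}\supp_R(M).
\]
Since $f^*M\simeq 0$, the left-hand side is empty by \Cref{thm:local_global}. On the other hand $M\in\Gamma_{\fp}\Mod_R$ forces $\supp_R(M)\subseteq\{\fp\}$, and by hypothesis $(2)$ there exists $\fq\in\Spec^h(\pi_*S)$ with $\res_f(\fq)=\fp$, so $\res_f^{-1}(\fp)\neq\varnothing$. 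Combining these, the only way for $\res_f^{-1}\supp_R(M)$ to be empty is to have $\supp_R(M)=\varnothing$, and \Cref{thm:local_global} then yields $M\simeq 0$.

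The implication $(2)\Rightarrow(3)$ is entirely dual and uses the cosupport half of \Cref{prop:abstractsubgroupthm}. Namely, for $N\in\Lambda^{\fp}\Mod_R$ with $f^!N\simeq 0$, the identity
\[
\cosupp_S(f^!N)=\res_f^{-1}\cosupp_R(N)
\]
together with $\cosupp_R(N)\subseteq\{\fp\}$ and the nonemptiness of $\res_f^{-1}(\fp)$ forces $\cosupp_R(N)=\varnothing$, whence $N\simeq 0$ by \Cref{thm:local_global}.

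No step looks seriously obstructed: the content of the lemma is essentially a repackaging of \Cref{prop:abstractsubgroupthm} and \Cref{lem:ressurj}. The one subtlety to be careful about is ensuring that the implicit use of \Cref{prop:abstractsubgroupthm} in $(2)\Rightarrow(3)$ applies to an arbitrary $N\in\Lambda^{\fp}\Mod_R$ (not merely modules supported at $\fp$), but this is exactly the generality in which that proposition is stated.
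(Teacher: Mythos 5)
Your proof is correct and follows essentially the same route as the paper: the forward implications come from \Cref{lem:ressurj}, and the converses $(2)\Rightarrow(1)$ and $(2)\Rightarrow(3)$ combine \Cref{prop:abstractsubgroupthm} with the nonemptiness of $\res_f^{-1}(\fp)$ and the detection of zero objects via \Cref{thm:local_global}. The subtlety you flag at the end is handled exactly as you suggest, since \Cref{prop:abstractsubgroupthm} is stated for arbitrary $N\in\Lambda^{\fp}\Mod_R$ under minimality of $\Gamma_{\fp}\Mod_R$ alone.
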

\begin{proof}
\Cref{lem:ressurj} gives one implication, so consider $M \in \Gamma_{\fp}\Mod_R$ with $f^*(M) \simeq 0$. By \cref{prop:abstractsubgroupthm}, this implies that 
\[
\varnothing = \supp_S(f^*M) = \res_f^{-1}\supp_R(M).
\]
Since $\fp$ is in the image of $\res$ and $\supp_R(M) \subseteq \{\fp\}$, it follows that $\supp_R(M) = \varnothing$, hence $M \simeq 0$ by \Cref{thm:local_global}. The dual equivalence between (2) and (3) is proven similarly. 
\end{proof}

\begin{rem}
Note that the assumption in the previous lemmas are slightly asymmetric: even though we deduce statements about induction and coinduction, we only assume that $\Gamma_{\fp}\Mod_R$ (and not $\Lambda^{\fp}\Mod_R$) is minimal.
\end{rem}

We can combine the previous two local lemmas in a global statement, which says that conservativity for a morphism $f$ of ring spectra is a strengthened form of surjectivity of $\res_f$. 

\begin{prop}\label{prop:reschouinard}
Suppose $f\colon R \to S$ is a map of commutative ring spectra and consider the following two conditions:
	\begin{enumerate}
		\item $f$ is conservative (resp.~coconservative).
		\item $\res_f\colon \Spec^h(\pi_*S) \to \Spec^h(\pi_*R)$ is surjective.  
	\end{enumerate}
Then (1) implies (2), and the reverse implication holds provided $R$ is Noetherian and $\Mod_R$ is stratified. 
\end{prop}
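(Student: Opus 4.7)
The plan is to reduce both implications to a fiber-by-fiber analysis over $\Spec^h(\pi_*R)$, combining the local conservativity criterion of \Cref{lem:chouinardiff} with the local-to-global principle \Cref{thm:local_global}.

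For $(1)\Rightarrow(2)$, I would fix any $\fp\in\Spec^h(\pi_*R)$ and note that conservativity of $f^*$ on all of $\Mod_R$ automatically restricts to conservativity on the full subcategory $\Gamma_\fp\Mod_R$; \Cref{lem:ressurj} then immediately places $\fp$ in the image of $\res_f$, so $\res_f$ is surjective. The coconservative case is entirely parallel via the corresponding assertion for $\Lambda^\fp\Mod_R$ in \Cref{lem:ressurj}. This direction requires neither the Noetherian nor the stratification hypothesis.

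For the reverse direction, assume $R$ is Noetherian, $\Mod_R$ is stratified, and $\res_f$ is surjective. Given $M\in\Mod_R$ with $f^*M\simeq 0$, the smashing property of $\Gamma_\fp$ yields
\[
f^*\Gamma_\fp M \simeq \Gamma_\fp R \otimes_R f^*M \simeq 0
\]
for every $\fp$. Stratification gives minimality of $\Gamma_\fp\Mod_R$, so surjectivity of $\res_f$ combined with \Cref{lem:chouinardiff} forces $f^*$ to be conservative on $\Gamma_\fp\Mod_R$; therefore $\Gamma_\fp M\simeq 0$ for all $\fp$, and the support version of \Cref{thm:local_global} yields $M\simeq 0$. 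For the coconservative half, starting from $f^!M\simeq 0$ I would use $\Lambda^\fp M \simeq \Hom_R(\Gamma_\fp R, M)$ (a formal consequence of $\Gamma_\fp$ being smashing) together with the tensor--hom adjunction to identify
\[
f^!\Lambda^\fp M \simeq \Hom_R(\Gamma_\fp R, f^!M) \simeq 0,
\]
then conclude $\Lambda^\fp M \simeq 0$ from the third part of \Cref{lem:chouinardiff}, and apply the cosupport form of \Cref{thm:local_global}.

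The argument is short, and the only mild subtlety is the coconservative case, where the absence of a smashing description of $\Lambda^\fp$ is circumvented by the tensor--hom computation above; apart from this bookkeeping, the proof is a direct globalization of the local criterion \Cref{lem:chouinardiff}.
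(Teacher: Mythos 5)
Your proposal is correct and follows essentially the same route as the paper: reduce $(1)\Rightarrow(2)$ to \Cref{lem:ressurj} applied prime by prime, and for the converse use the smashing property of $\Gamma_{\fp}$, the local criterion \Cref{lem:chouinardiff} (available because stratification gives minimality of each $\Gamma_{\fp}\Mod_R$ and surjectivity of $\res_f$ gives the lifting of primes), and then \Cref{thm:local_global}. The only difference is that you write out the coconservative half via $f^!\Lambda^{\fp}M\simeq\Hom_S(f^*\Gamma_{\fp}R,f^!M)$, which the paper leaves to the reader; your computation there is the correct one.
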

\begin{proof}
We will only prove the claim about conservativity of $f$, leaving the modifications for the dual proof to the reader. If $f$ is conservative, then $f^*\colon \Gamma_{\fp}\Mod_R \subseteq \Mod_R \to \Mod_S$ is conservative for all $\fp \in \Spec^h(\pi_*R)$, so $\Spec^h(\pi_*R)$ is in the image of $\res_f$ by \Cref{lem:ressurj}. This establishes the implication $(1) \implies (2)$.
 
Conversely, assume that $\Mod_R$ is stratified and that $\res_f$ is surjective. Let $\fp\in\Spec^h(\pi_*R)$ and $M\in\Mod_R$ with $f^*(M)\simeq0$. This implies that 
\[
f^*(\Gamma_{\fp}M) \simeq f^*(\Gamma_{\fp}R) \otimes_S f^*(M) \simeq 0.
\] 
Since $\res_f$ is surjective, \Cref{lem:chouinardiff} yields $\Gamma_{\fp}M\simeq0$. But $\fp$ was arbitrary, so it follows by the local-to-global principle \cref{thm:local_global} that $M\simeq0$.
\end{proof}

\begin{rem}
The base-change formula in \cref{thm:suppbasechange} applied to $\supp_R(f_*S)$ yields
\[
	\supp_R(f_*S)=\res_f(\supp_SS)=\res_f(\Spec^h(\pi_*S)).
\]
Thus, $\res_f$ being surjective is the same as having $\supp_R(f_*S)=\Spec^h(\pi_*R)$.
\end{rem}

\begin{lem}\label{lemma:chouinard-transitivity}
Let $f\colon R\rightarrow S$ and $g\colon S\rightarrow T$. If both are conservative (resp.~coconservative) then so is their composite. Conversely, if $g\circ f$ is conservative (resp.~coconservative) then so is $f$; if $f^*$ is additionally essentially surjective, then $g$ is also conservative (resp.~coconservative).  
\end{lem}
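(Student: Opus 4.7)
The plan is to reduce every assertion to a formal consequence of the composition identities
\[
(g\circ f)^* \simeq g^* \circ f^* \qquad \text{and} \qquad (g\circ f)^! \simeq g^! \circ f^!,
\]
which are forced by uniqueness of adjoints from the composite restriction $(g\circ f)_* \simeq f_* \circ g_*$. With these in hand, each statement becomes a short diagram chase using the reformulation of conservativity for an exact functor between stable categories as the detection of the zero object.

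First I would establish the forward direction: if $f^*$ and $g^*$ are both conservative, then $(g\circ f)^* \simeq g^* \circ f^*$ is a composite of conservative functors and hence itself conservative, and the coconservative version is identical with $g^!\circ f^!$. Next, if $(g\circ f)$ is conservative and $M\in\Mod_R$ satisfies $f^*M\simeq 0$, applying $g^*$ yields $(g\circ f)^*M\simeq 0$ and forces $M\simeq 0$, so $f$ is conservative; the coconservative version is verbatim with coinduction. Finally, assuming in addition that $f^*$ is essentially surjective, for any $N\in\Mod_S$ with $g^*N\simeq 0$ I would write $N\simeq f^*M$ for some $M\in\Mod_R$ and compute $(g\circ f)^*M\simeq g^*f^*M\simeq 0$; hence $M\simeq 0$ by hypothesis, and so $N\simeq f^*M\simeq 0$, giving conservativity of $g^*$.

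The main point to be careful about is the coconservative analogue of this last step. The direct parallel would write $N\simeq f^!M$ and conclude from $g^!N\simeq (g\circ f)^!M\simeq 0$ that $M$, and therefore $N$, vanishes, but this literally requires essential surjectivity of $f^!$ rather than $f^*$. I would resolve this by reading the essential-surjectivity hypothesis as applying to the appropriate adjoint in each case, or by using a base-change identity between $f^*$ and $f^!$ of the type appearing in the proof of \Cref{thm:suppbasechange} to transfer essential surjectivity from $f^*$ to $f^!$. Modulo this bookkeeping, the entire lemma is pure adjunction formalism and should require no use of the Noetherian or stratification hypotheses.
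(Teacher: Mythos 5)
Your proof is correct and, on the one case the paper actually writes out (the final clause, in its conservative version), it is identical to the paper's argument: write $N\simeq f^*M$, compute $(gf)^*M\simeq g^*f^*M\simeq 0$, conclude $M\simeq 0$ and hence $N\simeq 0$. The paper explicitly leaves all the remaining cases to the reader, and your treatment of them via the composition identities $(gf)^*\simeq g^*f^*$ and $(gf)^!\simeq g^!f^!$ (which follow from $(gf)_*\simeq f_*g_*$ by uniqueness of adjoints) is exactly what is intended. You are also right that no Noetherian or stratification hypotheses are needed anywhere in this lemma.

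Your worry about the coconservative version of the final clause is well founded, and the paper does not resolve it, since its proof stops after the conservative case. The direct argument needs every $N\in\Mod_S$ to be of the form $f^!M$, i.e.\ essential surjectivity of $f^!$, and this is a genuinely different hypothesis from essential surjectivity of $f^*$: the two adjoints can behave quite differently in this setting (cf.\ \Cref{rem}, where an equivalence $f^*\simeq f^!$ is shown to force $f$ to be finite). Your first fix is the right one — read the ``resp.'' as also converting the hypothesis into essential surjectivity of the relevant adjoint, which is surely the intended meaning. Your second suggestion, transferring essential surjectivity from $f^*$ to $f^!$ by a base-change identity, should be dropped: there is no such general transfer, and nothing in the proof of \Cref{thm:suppbasechange} provides one. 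For reassurance, note that the essential-surjectivity clause is never invoked elsewhere in the paper; every application of this lemma (\Cref{ex:fusion}, \Cref{prop:indcoinductionk1x}, \Cref{thm:nhchouinard}, and the retractive descent proposition) uses only the forward direction or the implication ``$gf$ (co)conservative $\Rightarrow$ $f$ (co)conservative'', both of which you prove correctly.
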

\begin{proof}
We verify the last claim about conservativity and leave the proof of the remaining statements to the reader. Consider $N \in \Mod_S$ with $g^*(N) \simeq 0$. By assumption, there exists $M \in \Mod_S$ such that $f^*(M) \simeq N$, hence $(gf)^*(M) \simeq g^*(f^*(M))\simeq 0$. Since $(gf)^*$ is conservative, $M \simeq 0$, so $N \simeq 0$. 
\end{proof}

\begin{lem}\label{lem:chouinardloc}
If $R \in \Loc_R(S)$, then $f$ is biconservative. In particular 
this holds if $f\colon R\rightarrow S$ admits an $R$-module retract.
\end{lem}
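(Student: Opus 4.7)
The plan is to observe that both $f^{*}(-) = S\otimes_R(-)$ and $f^{!}(-)=\Hom_R(S,-)$, viewed as functors of $S$ with $M\in\Mod_R$ fixed, detect vanishing on localizing subcategories. Concretely, I would fix an arbitrary $M\in\Mod_R$ and define
\[
\cC_M=\{N\in\Mod_R\mid N\otimes_R M\simeq 0\}, \qquad
\cC'_M=\{N\in\Mod_R\mid \Hom_R(N,M)\simeq 0\}.
\]
Since $(-)\otimes_R M$ is exact and preserves arbitrary coproducts (and retracts), $\cC_M$ is a localizing subcategory of $\Mod_R$. Dually, $\Hom_R(-,M)$ sends coproducts (and colimits generally) to products (limits), and a product of zero objects is zero, so $\cC'_M$ is also closed under coproducts, triangles, shifts, and retracts; hence $\cC'_M$ is localizing as well.

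Next I would use these closure properties to prove biconservativity. Suppose $f^{*}(M)\simeq 0$, i.e.\ $S\in\cC_M$. Then $\Loc_R(S)\subseteq \cC_M$, and the hypothesis $R\in\Loc_R(S)$ gives $R\in\cC_M$, whence $M\simeq R\otimes_R M\simeq 0$. The exact same argument with $\cC'_M$ in place of $\cC_M$ shows that $f^{!}(M)\simeq 0$ forces $M\simeq 0$. Thus $f^{*}$ and $f^{!}$ are both conservative, i.e.\ $f$ is biconservative.

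Finally, for the ``in particular'' clause, assume $f$ admits an $R$-module retract $g\colon S\to R$ with $g\circ f=\id_R$. Then $R$ is a retract of $S$ in $\Mod_R$, so $R\in\Thick_R(S)\subseteq\Loc_R(S)$, reducing this case to the one just established.

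There is essentially no hard step here; the only point that deserves care is verifying that $\cC'_M$ is closed under all small coproducts (which becomes the limit-preservation of $\Hom_R(-,M)$). Once this is noted, the rest is formal manipulation of localizing subcategories.
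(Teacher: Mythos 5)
Your proof is correct and follows essentially the same route as the paper: the paper also observes that $\{N \mid N\otimes_R M\simeq 0\}$ (resp.\ $\{N\mid \Hom_R(N,M)\simeq 0\}$) is a localizing subcategory containing $S$, hence containing $R$, which forces $M\simeq 0$, and reduces the retract case to $R\in\Loc_R(S)$. Your write-up merely makes the closure properties of these two subcategories more explicit.
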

\begin{proof}
Indeed, given $M \in \Mod_R$ such that $f^*(M) \simeq S \otimes_R M \simeq 0$, it follows that $N \otimes_R M \simeq 0$ for any $N \in \Loc_R(S)$. In particular, $M \simeq R \otimes_R M \simeq 0$. The argument for coconservativity is similar. Finally, if $f$ admits an $R$-module retract, then $R \in \Loc_R(S)$.
\end{proof}

We end this section with a formal base-change property for (co)conservativity of maps that will turn out to be useful later.

\begin{lem}\label{lem:chouinardbasechange}
Given a pushout diagram of commutative ring spectra
\[
\xymatrix{R \ar[r]^{i} \ar[d]_f & S \ar[d]^g \\
A \ar[r]^j & B}
\]
such that $f$ is conservative (resp.~coconservative), then $g$ is conservative (resp.~coconservative) as well. 
\end{lem}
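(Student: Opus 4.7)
The plan is to reduce the conservativity of $g$ to that of $f$ using a base-change identification coming from the pushout, combined with the fact that restriction along any ring map is itself conservative.

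First I will observe that, since the square is a pushout of commutative ring spectra, we have $B \simeq A \otimes_R S$ as commutative $S$-algebras (and as commutative $A$-algebras). For the conservative case, this immediately gives a base-change formula: for any $N \in \Mod_S$,
\[
g^*(N) \simeq B \otimes_S N \simeq (A \otimes_R S) \otimes_S N \simeq A \otimes_R i_*(N) \simeq f^*(i_*(N)),
\]
where $i_* \colon \Mod_S \to \Mod_R$ denotes restriction along $i$. Now suppose $g^*(N) \simeq 0$; then $f^*(i_*(N)) \simeq 0$, and because $f$ is conservative, $i_*(N) \simeq 0$. Since restriction along a morphism of ring spectra always detects the zero object, this forces $N \simeq 0$, proving that $g$ is conservative.

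For the coconservative case, the corresponding base-change identification is slightly more involved but follows from the usual tensor-hom adjunction: for $N \in \Mod_S$ there is a natural equivalence
\[
g^!(N) \simeq \Hom_S(B, N) \simeq \Hom_S(A \otimes_R S, N) \simeq \Hom_R(A, i_*(N)) \simeq f^!(i_*(N)),
\]
interpreted as an equivalence of $A$-modules (with the $A$-action on the right-hand side coming from $A$). If $g^!(N) \simeq 0$, then $f^!(i_*(N)) \simeq 0$, and coconservativity of $f$ together with conservativity of $i_*$ gives $N \simeq 0$.

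The argument is essentially formal once the base-change identifications for $g^*$ and $g^!$ are in hand; the main thing to be careful about is the second one, where one has to verify that the iterated adjunctions assemble correctly so that $\Hom_S(A \otimes_R S, N)$ is genuinely computed by $\Hom_R(A, i_*N)$ with its canonical $A$-module structure. This is standard for commutative ring spectra, so no serious obstacle is expected.
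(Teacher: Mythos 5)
Your proof is correct and follows essentially the same route as the paper: both establish the base-change equivalences $j_*g^*\simeq f^*i_*$ and $j_*g^!\simeq f^!i_*$ from $B\simeq A\otimes_R S$ and then conclude using conservativity of $f^*$ (resp.\ $f^!$) together with the fact that restriction detects zero objects. The only difference is presentational: you verify these equivalences by direct computation with the tensor and hom formulas, whereas the paper packages them as the Beck--Chevalley mate and its right adjoint.
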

\begin{proof}
Both statements are consequences of the Beck--Chevalley condition and its dual, which we briefly recall for the convenience of the reader. The natural equivalence $i_*g_* \simeq f_*j_*$ has a mate $\zeta\colon f^*i_* \to j_*g^*$ given by the composite 
\[
\xymatrix{f^*i_* \ar[r]^-{\eta_g} & f^*i_*g_*g^* \simeq f^*f_*j_*g^* \ar[r]^-{\epsilon_f} & j_*g^*,}
\]
where $\eta_g$ is the unit of the adjunction $(g^* \dashv g_*)$ and  $\epsilon_f$ is the counit of the adjunction $(f^* \dashv f_*)$. In order to show that $\zeta$ is an equivalence, it suffices to evaluate it on $S$ and restrict to $\Mod_R$. The claim then follows from the equivalence $A \otimes_R S \simeq B$ of $R$-modules. Passing to right adjoints yields natural equivalences $i^!f_* \simeq g_*j^!$ and, by symmetry, $j_*g^! \simeq f^!i_*$. 

Suppose now that $N \in \Mod_S$ such that $g^*(N) \simeq 0$, then there are equivalences of $A$-modules $0\simeq j_*g^*(N) \simeq f^*i_*(N)$. Therefore, $i_*(N) \simeq 0$ by assumption on $f^*$ and thus $N \simeq 0$. Similarly, if $f^!$ is conservative, then $0 \simeq j_*g^!(N) \simeq f^!i_*(N)$ implies $N \simeq 0$.
\end{proof}

\section{Quillen lifting and stratification}\label{sec:quillen}
In this section we study stratification in the sense of \cite{bik11} for structured ring spectra. We review and extend our previous work \cite{bchv1}, showing how to descend stratification along a morphism of ring spectra. To do so, we introduce the notion of simple Quillen lifting, which is a stronger form of the Quillen lifting property considered in \cite{bchv1}.

\subsection{Simple Quillen lifting}\label{ssec:simple}

In \cite{bchv1} we introduced the concept of Quillen lifting for a morphism of ring spectra $f\colon R \to S$, which we recall below. 

\begin{defn}\sloppy\label{defn:qlifting}
A morphism of Noetherian commutative ring spectra $f\colon R \to S$ is said to satisfy Quillen lifting if for any two modules $M,N \in \Mod_R$ such that there is 
\[
\medmuskip=2mu \fp \in \res_f(\supp_S(f^* M)) \cap \res_f(\cosupp_S(f^! N)),
\]
there exists a prime ideal $\cQ_f(\fp):=\fq \in \res_f^{-1}(\fp)$ with $\medmuskip=2mu \fq \in \supp_S(f^* M) \cap \cosupp_S(f^! N)$. In this case, we will refer to $\cQ(\fp) = \cQ_f(\fp)$ as a Quillen lift of $\fp$ along $f$.
\end{defn}

We will introduce a variant called simple Quillen lifting that is satisfied in many examples and implies Quillen lifting, and establish several of its basic properties.

\begin{defn}
A morphism $f\colon R \to S$ of commutative ring spectra is said to satisfy simple Quillen lifting if for any $M \in \Mod_R$:
\[
\res_f^{-1}\res_f\supp_S (f^* M) = \supp_S (f^* M) \quad \text{and} \quad \res_f^{-1}\res_f\cosupp_S(f^! M) = \cosupp_R(f^! M).
\]
Note that the inclusions $\supseteq$ hold unconditionally.
\end{defn}

\begin{lem}\label{lem:simplequillenlifting}
Simple Quillen lifting of a morphism $f\colon R \to S$ implies Quillen lifting.
\end{lem}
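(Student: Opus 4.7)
The plan is to unpack both definitions and observe that under simple Quillen lifting the sets $\supp_S(f^*M)$ and $\cosupp_S(f^!N)$ are each a union of fibers of $\res_f$, so membership of $\fp$ in the images of their projections automatically produces a common preimage.

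Concretely, I would start from a pair $M, N \in \Mod_R$ and a prime
\[
\fp \in \res_f\bigl(\supp_S(f^*M)\bigr) \cap \res_f\bigl(\cosupp_S(f^!N)\bigr).
\]
From $\fp \in \res_f\cosupp_S(f^!N)$, pick any $\fq \in \res_f^{-1}(\fp)$ lying in $\cosupp_S(f^!N)$; such a $\fq$ exists by definition of the image. The only step requiring simple Quillen lifting is then to show that this same $\fq$ also lies in $\supp_S(f^*M)$. Since $\fp = \res_f(\fq) \in \res_f\supp_S(f^*M)$, the identity
\[
\res_f^{-1}\res_f\supp_S(f^*M) = \supp_S(f^*M)
\]
immediately gives $\fq \in \supp_S(f^*M)$. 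Hence $\fq$ is the required Quillen lift of $\fp$ along $f$, witnessing Quillen lifting.

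There is essentially no obstacle: the argument is a direct set-theoretic consequence of the definitions, using only that a set of the form $\res_f^{-1}(T)$ meets $\res_f^{-1}(\fp)$ in every point of that fiber whenever $\fp \in T$. One could alternatively phrase it symmetrically by picking $\fq$ in $\supp_S(f^*M)$ first and using the cosupport identity; either way works and requires only one of the two saturation hypotheses of simple Quillen lifting.
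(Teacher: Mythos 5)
Your proof is correct and is essentially the same set-theoretic argument as the paper's, which shows directly that the whole fiber $\res_f^{-1}(\fp)$ lies in $\supp_S(f^*M)\cap\cosupp_S(f^!N)$ using both saturation identities. Your observation that a single identity suffices (by choosing $\fq$ in the other set first) is a correct minor refinement but not a different method.
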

\begin{proof}
Let $\fp$ be in $\res_f(\supp_S(f^* M)) \cap \res_f(\cosupp_S(f^! N))$, so that $\res_f^{-1}(\fp)$ is non-empty. It then follows from simple Quillen lifting that we have inclusions
\begin{align*}
\res_f^{-1}(\fp) & \subseteq \res_f^{-1}\res_f(\supp_S(f^* M)) \cap \res_f^{-1}\res_f(\cosupp_S(f^! N)) \\
& = \supp_S(f^* M) \cap \cosupp_S(f^! N), \end{align*}
so that $f$ satisfies Quillen lifting. 
\end{proof}

The following observation is immediate from the definition of simple Quillen lifting.

\begin{lem}\label{lem:injective}
If the restriction map $\res_f\colon \Spec^h(\pi_*S) \to \Spec^h(\pi_*R)$ is injective, then $f\colon R\to S$ satisfies simple Quillen lifting. 
\end{lem}
\begin{proof}
If $\res_f$ is injective, then $\res_f^{-1}\res_f (\cU) = \cU$ for any subset $\cU\subseteq\Spec^h(\pi_*S)$. 
\end{proof}

\begin{defn}
	We say that a map of (discrete) rings $g \colon A \to B$ is an $\mathcal{N}$-monomorphism if the kernel of $g$ is nilpotent, and that $g$ is a $\mathcal{N}$-epimorphism if for every $b \in B$ there exists an $\ell$ such that $b^{\ell} \in \im g$. A morphism of ring spectra $f\colon R \to S$ is an $\mathcal{N}$-monomorphism (resp. $\mathcal{N}$-epimorphism) if $f_* \colon \pi_*R \to \pi_*S$ is. 
\end{defn}

\begin{lem}\label{lem:niso}
If $f \colon R \to S$ is an $\mathcal{N}$-epimorphism, then $\res_f \colon \Spec^h(\pi_*S) \to \Spec^h(\pi_*R)$ is injective. 
\end{lem}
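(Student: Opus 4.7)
The plan is to argue directly at the level of homogeneous prime ideals, using the defining property of an $\mathcal{N}$-epimorphism to transfer containment between primes back and forth along $f_*$. So the first step is to fix $\fq_1,\fq_2 \in \Spec^h(\pi_*S)$ with $\res_f(\fq_1) = \res_f(\fq_2)$, i.e., $f_*^{-1}(\fq_1) = f_*^{-1}(\fq_2)$, and aim to show $\fq_1 = \fq_2$; by symmetry it suffices to verify $\fq_1 \subseteq \fq_2$.

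Next I would pick a homogeneous element $b \in \fq_1$. Since $f$ is an $\mathcal{N}$-epimorphism, there exists $\ell \geq 1$ and a homogeneous $a \in \pi_*R$ with $f_*(a) = b^\ell$. Because $\fq_1$ is prime and $b \in \fq_1$, we have $b^\ell = f_*(a) \in \fq_1$, hence $a \in f_*^{-1}(\fq_1)$. By the assumption that the contractions agree, this forces $a \in f_*^{-1}(\fq_2)$, so $b^\ell = f_*(a) \in \fq_2$. Primality of $\fq_2$ then gives $b \in \fq_2$, which is the desired inclusion.

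I do not anticipate a genuine obstacle here: the statement is a standard consequence of the fact that primes are radical, and the only subtlety is making sure we can choose homogeneous witnesses, which is immediate because $\pi_*R \to \pi_*S$ is a graded map so $b^\ell$ is homogeneous and may be written as $f_*$ of a homogeneous element of the appropriate degree. The argument never uses any structure beyond the graded ring map and the hypothesis; in particular, there is no need to invoke the topology on $\Spec^h$ or additional properties of $f$.
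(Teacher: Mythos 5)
Your argument is correct, and it takes a genuinely different route from the paper. The paper does not argue element-wise: it factors $f_*$ as $\pi_*R \twoheadrightarrow \pi_*R/\ker(f_*) \hookrightarrow \pi_*S$, observes that the surjection induces an injection on $\Spec^h$, and then cites Mathew--Naumann--Noel (Proposition 3.24 of their paper on nilpotence and descent) for the injectivity of $\Spec^h(\pi_*S) \to \Spec^h(\pi_*R/\ker(f_*))$. Your proof is self-contained and more elementary: fixing homogeneous primes $\fq_1,\fq_2$ with equal contraction, taking a homogeneous $b \in \fq_1$, writing $b^\ell = f_*(a)$, and bouncing $a$ through the common contraction to conclude $b \in \fq_2$ by primality is exactly the radical-ideal argument, and it needs nothing beyond the graded ring map. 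Two small points worth making explicit if you write this up: (i) the witness $a$ need not be chosen homogeneous --- it suffices that $f_*(a) = b^\ell \in \fq_1$ forces $a \in f_*^{-1}(\fq_1) = f_*^{-1}(\fq_2)$, although, as you note, projecting to the component of $a$ in degree $\ell\deg(b)$ does give a homogeneous witness since $f_*$ is graded; and (ii) you should record that a homogeneous ideal is determined by its homogeneous elements, so checking $b \in \fq_2$ for homogeneous $b \in \fq_1$ really does give $\fq_1 \subseteq \fq_2$. What the paper's approach buys is brevity by outsourcing the work; what yours buys is a transparent, reference-free proof of the same statement.
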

\begin{proof}
  The argument given in \cite[Proposition 3.24]{mnn} can easily be adjusted to prove this. Namely, factor $f_*$ as the composite $\pi_*R \to \pi_*R/\ker(f_*) \to \pi_*S$. The first map is surjective and so induces an injection on $\Spec^h$. The fact that $\Spec^h(\pi_*S) \to \Spec^h(\pi_*R/\ker(f_*))$ is injective is shown in \cite[Proposition 3.24]{mnn}, and we are done.  
\end{proof}

From \Cref{lem:injective,lem:niso} we deduce the following. 
\begin{cor}\label{lem:nilpotentCoker}
	If $f \colon R \to S$ is an $\mathcal{N}$-epimorphism, then $f$ satisfies simple Quillen lifting. 
\end{cor}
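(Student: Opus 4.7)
The plan is to simply chain together the two lemmas that immediately precede this corollary. Suppose $f\colon R \to S$ is an $\mathcal{N}$-epimorphism. By \Cref{lem:niso}, the induced restriction map
\[
\res_f\colon \Spec^h(\pi_*S) \longrightarrow \Spec^h(\pi_*R)
\]
is injective. Then by \Cref{lem:injective}, injectivity of $\res_f$ immediately implies that $f$ satisfies simple Quillen lifting, which is exactly what we want.

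There is really no obstacle here: this is a one-line composition of the two previous results, and no further analysis is needed. One might optionally remark that the hypothesis can be loosened by noting that \Cref{lem:injective} only requires set-theoretic injectivity of $\res_f$, so the exact same conclusion holds for any morphism of commutative ring spectra whose underlying graded ring map induces an injection on homogeneous prime spectra; the $\mathcal{N}$-epimorphism hypothesis is simply a convenient sufficient condition on the level of coefficient rings via the standard factorization argument recalled in the proof of \Cref{lem:niso}.
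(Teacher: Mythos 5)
Your proof is correct and is exactly the paper's argument: the corollary is stated as an immediate consequence of \Cref{lem:niso} (an $\mathcal{N}$-epimorphism induces an injection on homogeneous prime spectra) followed by \Cref{lem:injective} (injectivity of $\res_f$ gives simple Quillen lifting). Nothing further is needed.
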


\begin{lem}\label{lem:stratsql}
Let $f\colon R \to S$ be a morphism of commutative ring spectra. If $\Mod_R$ is stratified, then $f$ satisfies simple Quillen lifting and hence Quillen lifting.
\end{lem}
\begin{proof}
Let $M \in \Mod_R$. We have
\[
\res_f^{-1}\res_f(\supp_S(f^*M)) \subseteq \res_f^{-1}\supp_R(M) = \supp_S(f^*M),
\]
where the second equality uses Proposition 3.14 from \cite{bchv1}, which applies because $\Mod_R$ is stratified by assumption. The reverse inclusion holds unconditionally. Similarly, we deduce that
\[
\res_f^{-1}\res_f(\cosupp_S(f^!M)) = \cosupp_S(f^!M).
\]
This shows that $f$ satisfies simple Quillen lifting and hence also Quillen lifting by \Cref{lem:simplequillenlifting}. 
\end{proof}

\begin{rem}
It is not the case that a morphism $f\colon R \to S$ which satisfies Quillen lifting and such that $\Mod_R$ and $\Mod_S$ are stratified has to be conservative. Indeed, choose a surjective morphism of discrete Noetherian commutative rings $A \to B$ which gives a morphism of commutative Noetherian ring spectra $f \colon HA \to HB$. Since $\Mod_{HA} \simeq \cD(A)$, the unbounded derived category of $A$, and $\Mod_{HB} \simeq \cD(B)$ (see \cite[Remark 7.1.1.16]{ha}), these are both stratified by Neeman's result \cite{neemanchromtower}. Moreover, because $A \to B$ is surjective, simple Quillen lifting is satisfied by \Cref{lem:injective}. However, by \Cref{prop:reschouinard} induction along $f$ is conservative if and only if the induced map $\res_f \colon \Spec(B) \to \Spec(A)$ is surjective, which does not hold in general.
\end{rem}

Finally, we establish the following partial transitivity properties. 

\begin{prop}\label{prop:transitivity}
Let $R \xrightarrow{f} S \xrightarrow{g} T$ be morphisms of commutative ring spectra and assume that $g$ is biconservative. If both $f$ and $g$ satisfy simple Quillen lifting, then so does $gf$. 
\end{prop}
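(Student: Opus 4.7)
The plan is to verify the defining equality of simple Quillen lifting for $gf$ directly by routing through $\res_f$ and $\res_g$ separately, using the factorisations $(gf)^{*}\simeq g^{*}f^{*}$ and $\res_{gf}=\res_{f}\circ \res_{g}$ (so $\res_{gf}^{-1}=\res_{g}^{-1}\circ \res_{f}^{-1}$). For fixed $M\in \Mod_R$, I set $N:=f^{*}M\in \Mod_S$ so that $(gf)^{*}M\simeq g^{*}N$; the inclusion $\supseteq$ in the definition of simple Quillen lifting is automatic, so only $\subseteq$ is at stake.

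The key leverage is biconservativity of $g$: by \Cref{thm:suppbasechange}(1), the generally one-sided inclusion $\res_{g}\supp_{T}(g^{*}N)\subseteq \supp_{S}(N)$ is promoted to an equality. I will use this identity twice, flanking the two simple Quillen lifting hypotheses. First, it rewrites
\[
\res_{gf}\supp_{T}((gf)^{*}M)=\res_{f}\res_{g}\supp_{T}(g^{*}N)=\res_{f}\supp_{S}(f^{*}M).
\]
Applying $\res_{gf}^{-1}=\res_{g}^{-1}\res_{f}^{-1}$ and invoking simple Quillen lifting of $f$ at $M$ then collapses this set to $\res_{g}^{-1}\supp_{S}(N)$. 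Using the biconservativity equality once more in the reverse direction expresses the latter as $\res_{g}^{-1}\res_{g}\supp_{T}(g^{*}N)$, which by simple Quillen lifting of $g$ at $N$ equals $\supp_{T}(g^{*}N)=\supp_{T}((gf)^{*}M)$, as required.

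The cosupport identity for $(gf)^{!}$ follows by the dual argument, with $(-)^{*}$ replaced by $(-)^{!}$ and $\supp$ by $\cosupp$; here one invokes the coconservative half of biconservativity of $g$ via \Cref{thm:suppbasechange}(3) in place of (1), and the same double use of the base-change equality sandwiches the simple Quillen lifting hypotheses for $f$ and $g$.

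No genuine obstacle arises---the argument is purely formal once the bookkeeping of $\res_{f}$ and $\res_{g}$ is in place. The substantive remark worth flagging is that biconservativity of $g$ (rather than of $f$ alone, or neither) is exactly the right hypothesis: the \emph{second} invocation of \Cref{thm:suppbasechange}(1) in the chain requires the inclusion $\supp_{S}(N)\subseteq \res_{g}\supp_{T}(g^{*}N)$, which without conservativity of $g$ is only available in the wrong direction, so simple Quillen lifting of $gf$ could not be concluded from the data otherwise.
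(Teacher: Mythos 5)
Your proof is correct and follows essentially the same route as the paper's: the identical chain of equalities, rewriting $\res_{gf}^{-1}\res_{gf}\supp_T((gf)^*M)$ by using conservativity of $g$ (via \Cref{thm:suppbasechange}(1)) on either side of the two simple Quillen lifting hypotheses for $f$ and $g$, with the dual argument for cosupport via \Cref{thm:suppbasechange}(3). Your closing remark correctly identifies why biconservativity of $g$ is the operative hypothesis.
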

\begin{proof}
We will first verify the claim for induced modules. To this end, let $M \in \Mod_R$; we then have equalities:
\begin{align*}
\res_{gf}^{-1}\res_{gf}\supp_T((gf)^*M) & = \res_{gf}^{-1}\res_{f}\res_{g}\supp_T(g^*f^*M) & & \\
& = \res_{g}^{-1}\res_{f}^{-1}\res_{f}\supp_S(f^*M) & & \text{\Cref{thm:suppbasechange}(1)} \\
& = \res_{g}^{-1}\supp_S(f^*M) & & \text{simple Quillen lifting for }f \\
& = \res_{g}^{-1}\res_{g}\supp_T(g^*f^*M) & & \text{\Cref{thm:suppbasechange}(1)} \\
& = \supp_T((gf)^*M) & & \text{simple Quillen lifting for }g,
\end{align*}
where we used that $g$ is conservative in the penultimate equality. The argument is similar for coinduction, using \Cref{thm:suppbasechange}(3). 
\end{proof}

We can also prove the following converse to the last result. 

\begin{prop}
Let $R \xrightarrow{f} S \xrightarrow{g} T$ be morphisms of commutative ring spectra such that $g$ is biconservative. If $gf$ satisfies simple Quillen lifting, then so does $f$.
\end{prop}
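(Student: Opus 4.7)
The plan is to run a diagram chase on supports (and dually cosupports) that combines simple Quillen lifting for $gf$ with the base-change identities afforded by the biconservativity of $g$. Three ingredients already established in the paper will do all the work: (i) by \Cref{thm:suppbasechange}(1) and (3), applied to $g$, the biconservativity of $g$ yields, for every $M \in \Mod_R$,
\[
\supp_S(f^*M) = \res_g\supp_T((gf)^*M) \quad \text{and} \quad \cosupp_S(f^!M) = \res_g\cosupp_T((gf)^!M);
\]
(ii) by \Cref{prop:reschouinard}, biconservativity of $g$ implies that $\res_g\colon \Spec^h(\pi_*T)\to\Spec^h(\pi_*S)$ is surjective; (iii) the restriction maps compose as $\res_{gf} = \res_f\circ\res_g$.

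First I would fix $M \in \Mod_R$ and prove the non-trivial inclusion $\res_f^{-1}\res_f\supp_S(f^*M) \subseteq \supp_S(f^*M)$ (the reverse inclusion is automatic). Pick $\fp$ in the left-hand side. By (i) and (iii), $\res_f(\fp) \in \res_f\res_g\supp_T((gf)^*M) = \res_{gf}\supp_T((gf)^*M)$. Using the surjectivity in (ii), choose $\fq \in \Spec^h(\pi_*T)$ with $\res_g(\fq) = \fp$. Then $\res_{gf}(\fq) = \res_f(\fp) \in \res_{gf}\supp_T((gf)^*M)$, so
\[
\fq \in \res_{gf}^{-1}\res_{gf}\supp_T((gf)^*M) = \supp_T((gf)^*M),
\]
where the equality is precisely simple Quillen lifting for $gf$. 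Applying $\res_g$ and using (i) once more, $\fp = \res_g(\fq) \in \res_g\supp_T((gf)^*M) = \supp_S(f^*M)$, as wanted.

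The corresponding identity for cosupport is proved by the same chase, substituting $f^!$ for $f^*$ and $(gf)^!$ for $(gf)^*$ throughout, and using the cosupport clause in (i) (which relies on the coconservativity half of biconservativity of $g$). I do not anticipate any real obstacle: once the surjectivity of $\res_g$ is in hand, the chase is a straight invocation of \Cref{thm:suppbasechange}, simple Quillen lifting for $gf$, and the transitivity $\res_{gf}=\res_f\circ\res_g$. The only point requiring care is to remember that biconservativity of $g$ is what simultaneously supplies the two base-change equalities of (i) and the surjectivity of (ii); without it, only containments would be available and the chase would collapse.
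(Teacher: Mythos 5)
Your argument is correct and is essentially the paper's own proof, just written element-wise: the paper chains the same three ingredients (the base-change equalities $\supp_S(f^*M)=\res_g\supp_T((gf)^*M)$ and its cosupport analogue from \Cref{thm:suppbasechange}, surjectivity of $\res_g$ from \Cref{prop:reschouinard}, and simple Quillen lifting for $gf$) into the set-level identity $\res_g^{-1}\res_f^{-1}\res_f\supp_S(f^*M)=\supp_T((gf)^*M)\subseteq\res_g^{-1}\supp_S(f^*M)$ and then applies $\res_g$, whereas you lift a single prime $\fp$ to $\fq$ and push it through. No gaps.
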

\begin{proof}
Simple Quillen lifting for $gf$ and the same argument as in \Cref{prop:transitivity} gives
\begin{align*}
\res_{g}^{-1}\res_{f}^{-1}\res_{f}\supp_S(f^*M) & = \res_{gf}^{-1}\res_{gf}\supp_T((gf)^*M) \\
& = \supp_T((gf)^*M) \\
&  \subseteq \res_{g}^{-1}\res_{g} \supp_T(g^*f^*M) \\
& = \res_{g}^{-1}\supp_S(f^*M).
\end{align*}
By \Cref{prop:reschouinard}, $\res_{g}$ is surjective, so applying this map to the inclusion above yields an inclusion
\[
\res_{f}^{-1}\res_{f}\supp_S(f^*M) \subseteq \supp_S(f^*M),
\]
while the reverse inclusion always holds. The same argument works for coinduction. 
\end{proof}

\subsection{Descent for stratifications}\label{ssec:stratdescent}

In this section we isolate a condition for descending stratification along a morphism of Noetherian commutative ring spectra $f \colon R \to S$.
\begin{prop}\label{prop:minconditions}
Let $f\colon R \to S$ be a morphism of Noetherian commutative ring spectra. For a fixed prime ideal $\fp \in \Spec^h(\pi_*R)$, the following conditions are equivalent:
	\begin{enumerate}
		\item $\Gamma_{\fp}\Mod_R$ is minimal and $\fp \in \im(\res\colon \Spec^h(\pi_*S) \to \Spec^h(\pi_*R))$. 
		\item If $M, N \in \Gamma_{\fp}\Mod_R$ are non-zero modules, then $\Hom_S(f^* M, f^! N) \not\simeq 0$.
	\end{enumerate}
If we assume additionally that $\Mod_S$ is stratified, then both conditions are equivalent to:
	\begin{enumerate}
		\item[(3)] If $M, N \in \Gamma_{\fp}\Mod_R$ are non-zero modules, then 
		\[
		\supp_S(f^* M) \cap \cosupp_S(f^! N) \neq \varnothing.
		\]
	\end{enumerate}
\end{prop}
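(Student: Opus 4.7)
The overall plan is to prove $(1) \Leftrightarrow (2)$ first, without using any hypothesis on $\Mod_S$, and then to deduce $(2) \Leftrightarrow (3)$ under the additional assumption that $\Mod_S$ is stratified. The central observation that I will use throughout is the adjunction identity
\[
\Hom_S(f^*M, f^!N) \simeq \Hom_R(f_*f^*M, N) \simeq \Hom_R(S \otimes_R M, N),
\]
together with the fact that $f_*f^*M = S \otimes_R M$ lies in $\Loc_R(M) \subseteq \Gamma_{\fp}\Mod_R$ for any $M \in \Gamma_{\fp}\Mod_R$, since the colimit-preserving functor $(-) \otimes_R M$ sends $R$ to $M$ and $R$ generates $\Mod_R$.

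For $(1) \Rightarrow (2)$, the plan is to combine minimality of $\Gamma_{\fp}\Mod_R$ with \Cref{lem:chouinardiff}: since $\fp \in \im(\res_f)$ and $\Gamma_{\fp}\Mod_R$ is minimal, the restriction of $f^*$ to $\Gamma_{\fp}\Mod_R$ is conservative, so that $f_*f^*M \ne 0$ whenever $0 \ne M \in \Gamma_{\fp}\Mod_R$. I would then observe that $\cB_N = \{X \in \Gamma_{\fp}\Mod_R \mid \Hom_R(X, N) \simeq 0\}$ is a localizing subcategory of $\Gamma_{\fp}\Mod_R$ which does not contain $N$ (as $\id_N$ provides a non-zero element of $\Hom_R(N,N)$), hence vanishes by minimality. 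Applied to $X = f_*f^*M$, combined with the adjunction identity above, this yields $\Hom_S(f^*M, f^!N) \ne 0$.

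The main obstacle lies in the converse $(2) \Rightarrow (1)$. The image condition is easy: taking $M = N = \Gamma_{\fp}R$ in $(2)$ forces $f^*\Gamma_{\fp}R \ne 0$, hence $\res_f^{-1}(\fp) \ne \varnothing$ by the support base-change inclusion of \Cref{thm:suppbasechange}. For minimality, I plan to argue by contradiction: suppose $\cU := \Loc_R(M) \subsetneq \Gamma_{\fp}\Mod_R$ for some non-zero $M \in \Gamma_{\fp}\Mod_R$. Since $\Mod_R$ is compactly generated and $\cU$ is singly generated, the Bousfield localization $L_\cU$ exists. Picking $Y \in \Gamma_{\fp}\Mod_R \setminus \cU$, the object $N' := L_\cU Y$ is non-zero and lies in $\Gamma_{\fp}\Mod_R$ (the defining triangle of $N'$ has its other terms in the triangulated subcategory $\Gamma_{\fp}\Mod_R$), and by construction $\Hom_R(X, N') \simeq 0$ for every $X \in \cU$. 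Applied to $X = S \otimes_R M \in \cU$, this contradicts $(2)$ via the adjunction identity.

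Finally, for $(2) \Leftrightarrow (3)$ under the assumption that $\Mod_S$ is stratified, the plan is to exploit \Cref{thm:local_global}: non-vanishing of $\Hom_S(f^*M, f^!N)$ is equivalent to non-emptiness of its cosupport. The general inclusion
\[
\cosupp_S\Hom_S(X, Y) \subseteq \supp_S(X) \cap \cosupp_S(Y)
\]
yields $(2) \Rightarrow (3)$ unconditionally, while the reverse inclusion is a consequence of stratification of $\Mod_S$ (by an argument analogous to \Cref{lem:p-tensor_hom_(co)supp}, applied at each prime of $\Spec^h(\pi_*S)$ in the intersection), and gives $(3) \Rightarrow (2)$.
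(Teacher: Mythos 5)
Your argument is correct and takes essentially the same route as the paper's: both hinge on the adjunction $\Hom_S(f^*M,f^!N)\simeq\Hom_R(f_*f^*M,N)$, the fact that $f_*f^*M\in\Loc_R(M)$, \Cref{lem:chouinardiff} for the forward direction, and the cosupport-of-$\Hom$ formula under stratification of $\Mod_S$ for $(2)\Leftrightarrow(3)$. The only difference is that you reprove inline (via the subcategory $\cB_N$ and the local object $L_{\cU}Y$) the standard characterization of minimality that the paper simply cites from \cite[Lemma 4.1]{bik11}.
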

\begin{proof}
We first establish the equivalence between Conditions (1) and (2). By \cite[Lemma 4.1]{bik11}, $\Gamma_{\fp}\Mod_R$ is a minimal localizing subcategory of $\Mod_R$ if and only if $\Hom_R(M,N) \not\simeq 0$ for all non-zero $M,N \in \Gamma_p\Mod_R$. Assume (1), then \Cref{lem:chouinardiff} shows that $f_*f^*(M) \simeq 0$ if and only if $M \in \Gamma_{\fp}\Mod_R$ is zero. This implies that $\supp_R( f_*f^*M) = \supp_R (M)$ for all $M \in \Gamma_{\fp}\Mod_R$. Therefore, for non-zero $M,N \in \Gamma_{\fp}\Mod_R$, we have
\[
0 \not\simeq \Hom_R(f_*f^* M,N) \simeq \Hom_S(f^* M, f^! N). 
\]
Conversely, (2) says $0 \not\simeq \Hom_R(f_*f^* M,N)$ for non-zero $M,N \in \Gamma_{\fp}\Mod_R$, hence $0 \not\simeq \Hom_R(M,N)$ as well as, since $f_*f^*M \in \Loc_R(M)$. Therefore, $\Gamma_{\fp}\Mod_R$ is a minimal localizing subcategory of $\Mod_R$. Furthermore, (2) implies that $f^*$ and $f^!$ are conservative when restricted to $\Gamma_{\fp}\Mod_R$. \Cref{lem:ressurj} then shows $\fp \in \im(\res_f\colon \Spec^h(\pi_*S) \to \Spec^h(\pi_*R))$.

The equivalence of Conditions (2) and (3) is a direct consequence of the formula 
\[
\cosupp_S(\Hom_S(X,Y)) = \supp_S(X) \cap \cosupp_S(Y)
\]
for $X,Y \in \Mod_S$, which by \cite[Theorem 9.5(2)]{bik12} holds whenever $\Mod_S$ is stratified. 
\end{proof}

\begin{cor}\label{cor:stratiff}
Let $f\colon R \to S$ be a conservative (or coconservative) morphism of Noetherian commutative ring spectra and assume $\Mod_S$ is stratified, then $\Mod_R$ is stratified if and only if the following condition holds:
	\begin{enumerate}
		\item[$(\ast)$] For any homogeneous prime ideal $\fp \in \Spec^h(\pi_*R)$ and non-zero modules $M, N \in \Gamma_{\fp}\Mod_R$ we have $\supp_S(f^*M) \cap \cosupp_S(f^! N) \neq \varnothing$.
	\end{enumerate}
\end{cor}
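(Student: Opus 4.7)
The plan is to reduce the corollary to a prime-by-prime application of \Cref{prop:minconditions}. By definition, $\Mod_R$ is stratified if and only if $\Gamma_{\fp}\Mod_R$ is minimal for every $\fp\in\Spec^h(\pi_*R)$, while condition $(\ast)$ is likewise a conjunction indexed over all such $\fp$. It therefore suffices to show, for each fixed $\fp$, that minimality of $\Gamma_{\fp}\Mod_R$ is equivalent to the non-emptiness of $\supp_S(f^*M)\cap\cosupp_S(f^!N)$ for all non-zero $M,N\in\Gamma_{\fp}\Mod_R$.

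The first step is to invoke \Cref{prop:reschouinard}: since $f$ is conservative or coconservative, the induced map $\res_f\colon \Spec^h(\pi_*S)\to\Spec^h(\pi_*R)$ is surjective, so every $\fp\in\Spec^h(\pi_*R)$ automatically lies in its image. Under this surjectivity, the ``image'' clause in Condition~(1) of \Cref{prop:minconditions} becomes vacuous, and that condition reduces to the single requirement that $\Gamma_{\fp}\Mod_R$ be minimal.

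The second step is to apply \Cref{prop:minconditions} itself. Since $\Mod_S$ is stratified by hypothesis, we obtain the equivalence of Conditions~(1) and~(3) at the fixed prime $\fp$. Combined with the first step, this yields that $\Gamma_{\fp}\Mod_R$ is minimal if and only if $\supp_S(f^*M)\cap\cosupp_S(f^!N)\neq\varnothing$ for all non-zero $M,N\in\Gamma_{\fp}\Mod_R$. Quantifying over $\fp$ then gives the desired equivalence between stratification of $\Mod_R$ and condition~$(\ast)$.

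I expect the argument to be essentially a packaging step, with the substantive work already carried out in \Cref{prop:minconditions} and \Cref{prop:reschouinard}. The only point that warrants attention is verifying that one-sided (co)conservativity of $f$ is enough to invoke the surjectivity conclusion of \Cref{prop:reschouinard}; this is fine because that proposition treats the conservative and coconservative cases symmetrically and derives surjectivity of $\res_f$ from either hypothesis alone.
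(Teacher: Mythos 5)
Your proposal is correct and matches the paper's argument: both directions reduce to the equivalence of Conditions (1) and (3) in \Cref{prop:minconditions}, with \Cref{prop:reschouinard} supplying the surjectivity of $\res_f$ needed to verify the image clause of Condition (1). The only cosmetic difference is that the paper invokes surjectivity of $\res_f$ only for the ``only if'' direction (the implication $(3)\implies(1)$ already yields minimality without it), whereas you establish it up front for both; this is harmless.
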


\begin{proof}
Assume first that $(\ast)$ holds. In order to show that $\Mod_R$ is stratified, we must show that the localizing subcategories $\Gamma_{\fp}\Mod_R$ are minimal for all $\fp \in \Spec^h(\pi_*R)$, which follows by the implication $(3) \implies (1)$ of \Cref{prop:minconditions}. Conversely, suppose $\Mod_R$ is stratified. Since $f$  is conservative (or coconservative), $\res_f\colon \Spec^h(\pi_*S) \to \Spec^h(\pi_*R)$ is surjective by \Cref{prop:reschouinard}, so $(\ast)$ follows from $(1) \implies (3)$ of \Cref{prop:minconditions}. 
\end{proof}
From the previous corollary, we can then deduce that Quillen lifting is not only sufficient, but in fact also necessary for descent of stratification along a conservative and coconservative morphism of Noetherian ring spectra. 

\begin{thm}\label{thm:stratification}
Let $f\colon R \to S$ be a biconservative morphism of Noetherian commutative ring spectra and assume $\Mod_S$ is stratified, then the following are equivalent:
	\begin{enumerate}
		\item $\Mod_R$ is stratified.
		\item $f$ satisfies Condition $(\ast)$.
		\item $f$ satisfies Quillen lifting. 
		\item $f$ satisfies simple Quillen lifting.
	\end{enumerate}
\end{thm}
\begin{proof}
The equivalence of (1) and (2) is the content of \Cref{cor:stratiff}. Next, we will prove that Quillen lifting implies Condition $(\ast)$, thereby establishing that $(3) \implies (2)$. Since $f$ is biconservative, \Cref{thm:suppbasechange} implies that for all $M,N \in \Mod_R$ there are equalities
\[
\res_f\supp_S(f^* M) = \supp_R(M) \quad \text{and} \quad \res_f\cosupp_S(f^!N) = \cosupp_R(N).
\]
Given $\fp \in \Spec^h(\pi_*R)$, if $M,N \in \Gamma_{\fp}\Mod_R$ are non-zero modules, then $\supp_R(M) = \{\fp\} = \supp_R(N)$, hence also $\fp \in \cosupp_R(N)$ by \cite[Theorem 4.13]{bik12}. This puts us in the situation of Quillen lifting, hence the Quillen lift of $\fp$ along $f$ gives $\cQ(\fp) \in \supp_S(f^* M) \cap \cosupp_S(f^! N)$. 

Finally, we have that $(1) \implies (4)$ by \Cref{lem:stratsql}, and $(4) \implies (3)$ by \Cref{lem:simplequillenlifting}. \end{proof}

\begin{rem}\label{rem}
If $f$ is finite, then $f^*$ is conservative if and only if $f^!$ is conservative, see \cite[Lemma 3.17]{bchv1}. Further, if there exists a natural equivalence $f^* \simeq f^!$, then $f$ is finite. Indeed, this equivalence implies that $f^!$ preserves colimits, and hence the left adjoint $f_*$ preserves compact objects. This for example is the situation occurring for the stable module category of a finite group in \cite{bik_finitegroups}, where induction and coinduction coincide.
\end{rem}

Finally, we relate \Cref{thm:stratification} to the notion of Avrunin--Scott stratification introduced by Hovey and Palmieri in \cite{hp_tst}. Their Definition 5.5 axiomatizes the Avrunin--Scott theorem in modular representation theory, see \cite{avruninscott}, \cite[Theorem 10.7]{bcr2}, and \cite{bik_finitegroups}. The Avrunin--Scott identities appearing in the next result form a suitable generalization to a morphism $f\colon R \to S$ for which $f^*$ is not necessarily equivalent to $f^!$. 

\begin{cor}\label{cor:as}
Under the assumptions of \Cref{thm:stratification}, the equivalent Conditions (1)--(3) hold if and only if the Avrunin--Scott identities are satisfied for all $M \in \Mod_R$:
\[
\supp_{S}(f^* M) = \res_f^{-1}\supp_{R}(M) \quad \text{and} \quad
\cosupp_S(f^! M) = \res_f^{-1}\cosupp_R(M).
\]
\end{cor}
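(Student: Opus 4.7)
The plan is to prove the two implications separately, making essential use of the base-change identities in \Cref{thm:suppbasechange} together with the consequences of minimality of $\Gamma_\fp\Mod_R$ encoded in \Cref{lem:p-tensor_hom_(co)supp}. Since $f$ is biconservative, \Cref{prop:reschouinard} furnishes surjectivity of $\res_f\colon\Spec^h(\pi_*S)\to\Spec^h(\pi_*R)$, which I use throughout.

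For ``conditions $\Rightarrow$ Avrunin--Scott,'' the inclusions $\supp_S(f^*M)\subseteq\res_f^{-1}\supp_R(M)$ and $\cosupp_S(f^!M)\subseteq\res_f^{-1}\cosupp_R(M)$ are automatic by \Cref{thm:suppbasechange}(1) and (3). For the converse inclusions I would fix $\fq\in\Spec^h(\pi_*S)$ with $\fp:=\res_f(\fq)\in\supp_R(M)$ (respectively $\fp\in\cosupp_R(M)$) and aim to show $\Gamma_\fq f^*M\not\simeq 0$ (respectively $\Lambda^\fq f^!M\not\simeq 0$). Combining the smashing identity $\Gamma_\fq X\simeq\Gamma_\fq S\otimes_S X$ and the formula $\Lambda^\fq X\simeq\Hom_S(\Gamma_\fq S,X)$ with the projection formula and the $(f_*\dashv f^!)$-adjunction, conservativity of $f_*$ reduces the task to showing non-vanishing of the $R$-modules
\[
f_*\Gamma_\fq S\otimes_R M\qquad\text{and}\qquad\Hom_R(f_*\Gamma_\fq S,M).
\]
By \Cref{thm:suppbasechange}(2) the module $f_*\Gamma_\fq S$ has support $\{\fp\}$, and hence under condition (1) it lies in the minimal subcategory $\Gamma_\fp\Mod_R$. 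The first half of \Cref{lem:p-tensor_hom_(co)supp} then computes $\supp_R(f_*\Gamma_\fq S\otimes_R M)=\{\fp\}\cap\supp_R(M)=\{\fp\}$, and after rewriting $\Hom_R(f_*\Gamma_\fq S,M)\simeq\Hom_R(f_*\Gamma_\fq S,\Lambda^\fp M)$ via the $(\Gamma_\fp\dashv\Lambda^\fp)$-adjunction, the dual half yields $\cosupp_R\Hom_R(f_*\Gamma_\fq S,\Lambda^\fp M)=\{\fp\}\cap\cosupp_R(M)=\{\fp\}$. Both are non-empty, as required.

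For the converse, I would verify condition (3) of \Cref{thm:stratification}, namely Quillen lifting. Given $M,N\in\Mod_R$ and $\fp\in\res_f\supp_S(f^*M)\cap\res_f\cosupp_S(f^!N)$, the Avrunin--Scott identities immediately force $\fp\in\supp_R(M)\cap\cosupp_R(N)$, and reading the same identities in the opposite direction shows $\res_f^{-1}(\fp)\subseteq\supp_S(f^*M)\cap\cosupp_S(f^!N)$. Any $\fq\in\res_f^{-1}(\fp)$, which exists by surjectivity of $\res_f$, thus provides a Quillen lift.

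The only genuine subtlety I anticipate is in the cosupport half of the forward direction: \Cref{lem:p-tensor_hom_(co)supp} computes $\cosupp_R\Hom_R(-,N)$ only when $N\in\Lambda^\fp\Mod_R$, so one must first trade $M$ for $\Lambda^\fp M$, exploiting that the first argument $f_*\Gamma_\fq S$ is already $\Gamma_\fp$-local. Once this substitution is in place, all remaining steps are routine manipulations of the base-change and projection formulas developed in \Cref{sec:support}.
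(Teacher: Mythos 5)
Your proof is correct, and the backward implication (Avrunin--Scott implies Quillen lifting, hence the equivalent Conditions (1)--(3)) is exactly the paper's argument. For the forward implication the paper simply cites \cite[Proposition 3.14]{bchv1}, whereas you re-derive that statement from the local machinery of Section 1: in effect you run the proof of \Cref{prop:abstractsubgroupthm} with the roles of the two arguments of the tensor/hom swapped, placing $f_*\Gamma_{\fq}S$ (which has support $\{\fp\}$ by \Cref{thm:suppbasechange}(2) and hence lies in $\Gamma_{\fp}\Mod_R$) into the slot of \Cref{lem:p-tensor_hom_(co)supp} that carries the locality hypothesis, so that $M$ may be an arbitrary $R$-module rather than an object of $\Gamma_{\fp}\Mod_R$. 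Your handling of the cosupport half --- replacing $M$ by $\Lambda^{\fp}M$ via $\Hom_R(\Gamma_{\fp}R\otimes_R -,M)\simeq \Hom_R(-,\Lambda^{\fp}M)$ before invoking the dual half of the lemma --- is precisely the extra step this swap requires, and you correctly identified it as the only delicate point. The payoff of your route is self-containedness: the corollary no longer leans on the external reference. One small imprecision: membership of $f_*\Gamma_{\fq}S$ in $\Gamma_{\fp}\Mod_R$ already follows from $\supp_R(f_*\Gamma_{\fq}S)=\{\fp\}$ together with the local-to-global principle (\Cref{thm:local_global}); Condition (1) is needed only for the minimality of $\Gamma_{\fp}\Mod_R$ that makes \Cref{lem:p-tensor_hom_(co)supp} applicable.
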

\begin{proof}
Suppose $\Mod_R$ is stratified, then \cite[Proposition 3.14]{bchv1} implies that the Avrunin--Scott identities are satisfied for all $R$-modules $M$. Conversely, assume that these two identities hold for $M,N \in \Mod_R$. Let $\fp$ be a homogeneous prime ideal with
\[
\fp \in \res_f(\supp_S(f^* M)) \cap \res_f(\cosupp_S(f^! N)) \subseteq \supp_R(M) \cap \cosupp_R(N). 
\]
It follows that
\[
\varnothing \neq \res_f^{-1}(\fp) \subseteq \res_f^{-1}(\supp_R(M)) \cap \res_f^{-1}(\cosupp_R(N)) = \supp_{S}(f^* M) \cap \cosupp_S(f^! M),
\]
where the last equivalence uses the Avrunin--Scott identities for $M$ and $N$, respectively. This verifies Quillen lifting for the morphism $f$, so the equivalent Conditions (1)--(3) of \Cref{thm:stratification} hold.
\end{proof}

\subsection{An abstract criterion for simple Quillen lifting}\label{ssec:criterionsql}

The goal of this subsection is to prove an abstract condition which guarantees simple Quillen lifting for a morphism of Noetherian commutative ring spectra whose target decomposes into a finite product. This result and its proof is modelled on the proof of Quillen lifting in Theorem 5.10 of \cite{bchv1}.

\begin{prop}\label{ssec:criterionsql}
Let $I$ be a finite set and $f\colon R \to S = \prod_{i \in I}S_i$ a morphism of Noetherian commutative ring spectra with $\Mod_{S_i}$ stratified for all $i \in I$. Assume that for all $\fp \in \Spec^h(\pi_*R)$ there exist $i=i(\fp) \in I$ and a prime ideal $\cO(\fp) \in \Spec^h(\pi_*S_i)$ satisfying the following condition:
	\begin{enumerate}
		\item[($\clubsuit$)] For every $j \in I$ and every $\fq \in \Spec(\pi_*S_j)$ with $\fp = \res_f(\fq)$ there exists a biconservative map of commutative ring spectra $\zeta(\fq)\colon S_j \to S_i$ under $R$ such that $\res_{\zeta(\fq)}(\cO(\fp)) = \fq$.
	\end{enumerate}
Then $f$ satisfies simple Quillen lifting. 
\end{prop}

In our applications to commutative ring spectra of cochains on spaces, verifying the existence of a section $\cO\colon \Spec^h(\pi_*R) \to \Spec^h(\pi_*\prod_{i \in I}S_i)$ to the map $\res_f$ satisfying the condition ($\clubsuit$) is based on two ingredients:
\begin{itemize}
	\item A form of Quillen stratification for $\Spec^h(\pi_*R)$ in terms of the images of $(\Spec^h(\pi_*S_i))_{i\in I}$.
	\item The realizability of the associated transition maps on homotopy groups to morphisms $\zeta$ of commutative ring spectra, as in ($\clubsuit$). 
\end{itemize}
In practice, the first ingredient holds uniformly in our examples, while the reason for the second ingredient is more case specific. For concrete examples of this paradigm, see \cref{cor:simplequillenliftingk1x} and \cref{prop:autoql}. In these cases, one also says that $\fp$ originates in $\cO(\fp) \in \Spec^h(\pi_*S_i)$, justifying our choice of notation. 

\begin{proof}[Proof of \Cref{ssec:criterionsql}]
Throughout this proof, we will denote the components of the map $f$ by $f_i\colon R \to S_i$ and repeatedly use the decomposition $\Spec^h(\pi_*\prod_{i \in I}S_i) \cong \coprod_{i\in I}\Spec^h(\pi_*S_i)$ to identify both sides. Moreover, as observed in \cite[Section 5.2]{bchv1}, the support $\supp_S$ is compatible with this decomposition.

Given a module $M \in \Mod_{R}$, we need to show that there is an equality
\begin{equation}\
\res_{f}^{-1}\res_{f}\supp_{S}(f^*M) = \supp_{S}(f^*M),
\end{equation}
leaving the modifications for the dual claim about the cosupport of the coinduction of $M$ to the reader. Since the inclusion ``$\supseteq$'' is true automatically, it remains to prove the reverse inclusion. If $\res_{f}^{-1}\res_{f}\supp_{S}(f^*M)$ is empty, the claim holds vacuously, so we may assume that there is a prime ideal $\fq \in \res_{f}^{-1}\res_{f}\supp_{S}(f^*M)$. Unwinding this statement, there exist $j,k \in I$ with $\fq \in \Spec^h(\pi_*S_j)$ and $\fr \in \Spec^h(\pi_*S_k)$ with $\fr \in \supp_{S_k}(f_k^*M)$ and $\res_{f_j}(\fq) = \res_{f_k}(\fr)$. Our claim is that $\fq \in \supp_{S}(f^*M)$ as well. 

To this end, set $\fp = \res_{f_j}(\fq)$ and consider $\cO(\fp) \in \Spec^h(\pi_*S_i)$. By assumption, there exist morphisms of commutative ring spectra $\zeta(\fq)\colon S_j \to S_i$ and $\zeta(\fr)\colon S_k \to S_i$ under $R$ such that $\res_{\zeta(\fq)}(\cO(\fp)) = \fq$ and $\res_{\zeta(\fr)}(\cO(\fp)) = \fr$, respectively. We may illustrate the situation as follows:
\[
\xymatrix{& S_i \ar@{<--}[ld]_{\zeta(\fq)} \ar@{<-}[dd]_{f_i} \ar@{<--}[rd]^{\zeta(\fr)} & & & & & \cO(\fp) \ar@{~>}[ld]_{\res_{\zeta(\fq)}} \ar@{~>}[dd]_{\res_{f_i}} \ar@{~>}[rd]^{\res_{\zeta(\fr)}} \\
S_j \ar@{<-}[rd]_{f_j} & & S_k \ar@{<-}[ld]^{f_k} & & & \fq \ar@{~>}[rd]_{\res_{f_j}} & & \fr \ar@{~>}[ld]^{\res_{f_k}} \\
& R & & & & & \fp.}
\]
Since $\zeta(\fr)$ is biconservative and both $\Mod_{S_i}$ and $\Mod_{S_k}$ are stratified, the Avrunin--Scott identities are satisfied by \cref{cor:as}, which in turn implies 
\[
\cO(\fp) \in \res_{\zeta(\fr)}^{-1}(\fr) \subseteq \res_{\zeta(\fr)}^{-1}\supp_{S_k}(f_k^*M) = \supp_{S_i}(f_i^*M)),
\]
since $\zeta(\fr)^* \circ f_k^* \simeq f_i^*$. Applying $\res_{\zeta(\fq)}$ then yields 
\[
\fq = \res_{\zeta(\fq)}(\cO(\fp)) \in \res_{\zeta(\fq)}\supp_{S_i}(f_i^*M)) = \supp_{S_j}(f_j^*M) \subseteq \supp_S(f^*M)
\]
by \cref{thm:suppbasechange}(1), using the fact that $f_i^* \simeq \zeta(\fq)^* \circ f_j^*$.
\end{proof}

\subsection{Retractive descent}

The goal of this subsection is to exhibit sufficient conditions that guarantee descent of stratification along a morphism of ring spectra. In particular, we will show that the stratification property is closed under retracts. 

\begin{prop}
Let $f\colon R \to S$ be as above and suppose there exists a map of commutative Noetherian ring spectra $g\colon S \to T$ satisfying the following two conditions:
	\begin{enumerate}
		\item $gf$ is biconservative.
		\item $gf$ satisfies Quillen lifting.
	\end{enumerate}
If $\Mod_S$ is stratified, then $\Mod_R$ is stratified. 
\end{prop}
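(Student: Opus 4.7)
The plan is to apply \Cref{thm:stratification} to the morphism $f \colon R \to S$: since $\Mod_S$ is assumed stratified, it suffices to verify that $f$ is biconservative and that $f$ satisfies Quillen lifting. Biconservativity of $f$ will be immediate from \Cref{lemma:chouinard-transitivity}, which shows that (co)conservativity descends from a composite to its first factor. The substantive step is therefore to transfer Quillen lifting from $gf$ down to $f$.

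To carry this out, I would fix modules $M, N \in \Mod_R$ together with a prime
\[
\fp \in \res_f(\supp_S(f^*M)) \cap \res_f(\cosupp_S(f^!N)),
\]
and produce a Quillen lift $\fq \in \res_f^{-1}(\fp) \cap \supp_S(f^*M) \cap \cosupp_S(f^!N)$. Since both $f$ and $gf$ are biconservative, the equality cases in \Cref{thm:suppbasechange}(1) and (3) give
\[
\res_f \supp_S(f^*M) = \supp_R(M) = \res_{gf} \supp_T((gf)^*M),
\]
and analogously for cosupport. Hence $\fp$ also lies in $\res_{gf}(\supp_T((gf)^*M)) \cap \res_{gf}(\cosupp_T((gf)^!N))$, and Quillen lifting for $gf$ supplies a prime $\fr \in \res_{gf}^{-1}(\fp)$ with $\fr \in \supp_T((gf)^*M) \cap \cosupp_T((gf)^!N)$.

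The final step is to push $\fr$ down to $\Spec^h(\pi_*S)$. Setting $\fq := \res_g(\fr)$ and using the factorizations $(gf)^* \simeq g^* f^*$ and $(gf)^! \simeq g^! f^!$, the unconditional inclusions from \Cref{thm:suppbasechange}(1) and (3) applied to $g$ yield $\fq \in \supp_S(f^*M) \cap \cosupp_S(f^!N)$, while $\res_f(\fq) = \res_{gf}(\fr) = \fp$ identifies $\fq$ as the desired Quillen lift of $\fp$ along $f$. With biconservativity and Quillen lifting of $f$ both established, \Cref{thm:stratification} will conclude that $\Mod_R$ is stratified.

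The argument is largely formal, and I do not expect a genuinely hard step; the one subtlety to keep in mind is the asymmetry in \Cref{thm:suppbasechange}, where (co)conservativity is required on the outer maps $f$ and $gf$ in order to upgrade the support and cosupport inclusions to equalities, whereas no such hypothesis is placed on the intermediate map $g$. It is precisely this flexibility in $g$ that makes routing Quillen lifting through the composite $gf$ a genuine strengthening of the descent theorem, rather than just a restatement of it.
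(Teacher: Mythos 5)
Your proof is correct and follows the same overall strategy as the paper's: descend biconservativity to $f$ via \cref{lemma:chouinard-transitivity}, lift $\fp$ along $gf$ using its Quillen lifting property, define the Quillen lift along $f$ as $\res_g$ of the lift along $gf$, and conclude via \cref{thm:stratification}. The one place you diverge is in justifying the inclusion $\res_g\supp_T((gf)^*M)\subseteq\supp_S(f^*M)$ (and its cosupport analogue): the paper derives it from the Avrunin--Scott identity $\supp_T(g^*(f^*M))=\res_g^{-1}\supp_S(f^*M)$, which requires invoking stratification of $\Mod_S$ for the intermediate map $g$, whereas you obtain it directly from the unconditional halves of \cref{thm:suppbasechange}(1) and (3) applied to $g$ and the $S$-modules $f^*M$ and $f^!N$. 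Your route is slightly more economical, as it isolates the use of the hypothesis that $\Mod_S$ is stratified to the final application of \cref{thm:stratification}; both arguments are valid, and your closing remark about the asymmetry of the hypotheses correctly identifies why placing no condition on $g$ is the point of the statement.
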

\begin{proof}
By \Cref{lemma:chouinard-transitivity}, $f$ is biconservative. In light of \Cref{thm:stratification}, it thus remains to show that $f$ satisfies Quillen lifting. To this end, let $M, N \in \Mod_R$ be two modules such that there exists $\fp \in \res_f(\supp_S(f^*M)) \cap \res_f(\cosupp_S(f^!N))$. Since $gf$ is biconservative, \Cref{thm:suppbasechange} shows that
\[
\res_f\supp_S(f^*M) \subseteq \supp_R(M) = \res_{gf}\supp_T((gf)^*M) 
\]
and 
\[
\res_f\cosupp_S(f^!N) \subseteq \cosupp_R(N) = \res_{gf}\cosupp_T((gf)^!N), 
\]
so $\fp$ lies in image of $\res_{gf}$. Let $\cQ_{gf}(\fp)$ be a Quillen lift of $\fp$ along $gf$ in $\Spec^h(\pi_*T)$. We claim that 
\[
\cQ_f(\fp) = \res_g(\cQ_{gf}(\fp))
\]
is then a Quillen lift for $\fp$ along $f$. It is clear that $\res_g(\cQ_{gf}(\fp)) \in \res_f^{-1}(\fp)$. To prove that $\res_g(\cQ_{gf}(\fp)) \in \supp_S(f^*M) \cap \cosupp_S(f^!M)
$, we will first show that
\[
\res_g\supp_T((gf)^*M) \subseteq \supp_S(f^*M) \quad \text{and} \quad \res_g\cosupp_T((gf)^!N) \subseteq \cosupp_S(f^!N).
\]
Indeed, by Proposition 3.14 in \cite{bchv1} applied to $g$ and using that $\Mod_S$ is stratified, there are equalities
\[
\supp_T((gf)^*M) = \supp_T(g^*(f^*M)) = \res_g^{-1}\supp_S(f^*M),
\]
so we get
\[
\res_g\supp_T((gf)^*M) = \res_g\res_g^{-1}\supp_S(f^*M) \subseteq \supp_S(f^*M).
\]
The inclusion for cosupport and coinduction is proven similarly. It follows that 
\[
\res_g(\cQ_{gf}(\fp)) \in \res_g\supp_T((gf)^*M) \cap \res_g\cosupp_T((gf)^!M) \subseteq \supp_S(f^*M) \cap \cosupp_S(f^!N),
\]
hence $\res_g(\cQ_{gf}(\fp))$ is a Quillen lift for $\fp$ along $f$. 
\end{proof}

\begin{cor}\label{cor:retract}
If $f$ admits an $R$-module retract and $\Mod_S$ is stratified, then $\Mod_R$ is stratified.
\end{cor}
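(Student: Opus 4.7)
Since $f$ admits an $R$-module retract, \Cref{lem:chouinardloc} immediately gives that $R \in \Loc_R(S)$, and in particular that $f$ is biconservative. By \Cref{thm:stratification}, it therefore suffices to verify that $f$ satisfies Quillen lifting.

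The central structural input supplied by the retract $r\colon S \to R$ is the following: for every $M, N \in \Mod_R$, the unit $M \to f_*f^*M$ admits an $R$-module retraction (obtained by applying $(-)\otimes_R M$ to $R \xrightarrow{f} S \xrightarrow{r} R$) and the counit $f_*f^!N \to N$ admits an $R$-module section (obtained by applying $\Hom_R(-,N)$ to the same composite). Via the adjunction $\Hom_S(f^*M, f^!N) \simeq \Hom_R(f_*f^*M, N)$, this exhibits $\Hom_R(M,N)$ as an $R$-module retract of $\Hom_R(f_*f^*M, N)$.

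To check Quillen lifting, fix $M, N \in \Mod_R$ and take $\fp \in \res_f\supp_S(f^*M) \cap \res_f\cosupp_S(f^!N)$, which by biconservativity and \Cref{thm:suppbasechange} coincides with $\supp_R(M) \cap \cosupp_R(N)$. Using the stratification of $\Mod_S$ together with \cite[Theorem~9.5(2)]{bik12}, we have
\[
\supp_S(f^*M) \cap \cosupp_S(f^!N) \;=\; \cosupp_S\Hom_S(f^*M, f^!N),
\]
whose image under $\res_f$ equals $\cosupp_R\Hom_R(f_*f^*M, N)$ by \Cref{thm:suppbasechange}(2). The plan is to reduce to the case $M \in \Gamma_\fp\Mod_R$ and $N \in \Lambda^\fp\Mod_R$ (both nonzero by the choice of $\fp$) via the smashing property of $\Gamma_\fp$ and the adjunction $\Gamma_\fp \dashv \Lambda^\fp$; for such $M, N$, conservativity of $f^*$ and $f^!$ ensures that $f^*M$ and $f^!N$ are nonzero with support and cosupport contained in $\res_f^{-1}(\fp)$, and stratification of $\Mod_S$ combined with the retract splitting on the Hom-object should then furnish a common prime $\fq \in \res_f^{-1}(\fp)$ serving as the Quillen lift.

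\textbf{Main obstacle.} The delicate point is the last step, where one actually produces $\fq$. While the retract yields plentiful formal splittings, translating these into a concrete Quillen lift requires a careful interplay of the base-change formulas of \Cref{thm:suppbasechange} with stratification of $\Mod_S$ to control $\Hom_S(f^*\Gamma_\fp M, f^!\Lambda^\fp N)$, which contains $\Hom_R(\Gamma_\fp M, \Lambda^\fp N)$ as an $R$-module retract.
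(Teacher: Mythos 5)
Your reduction is correct and matches the paper's strategy up to a point: \Cref{lem:chouinardloc} gives biconservativity, and by \Cref{thm:stratification} everything hinges on Quillen lifting. But the proposal stops exactly where the work begins, and the completion you sketch cannot work as stated. The retract exhibits $\Hom_R(M,N)$ as a retract of $\Hom_S(f^*M,f^!N)\simeq\Hom_R(f_*f^*M,N)$, so it shows that the latter is nonzero \emph{provided} $\Hom_R(M,N)\neq 0$ for the given nonzero $M\in\Gamma_{\fp}\Mod_R$, $N\in\Lambda^{\fp}\Mod_R$. But nonvanishing of these Hom-objects is precisely minimality of $\Gamma_{\fp}\Mod_R$ (\cite[Lemma~4.1]{bik11}), i.e.\ the stratification of $\Mod_R$ you are trying to prove. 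So the "retract splitting on the Hom-object" points in the wrong direction and the argument is circular; no amount of massaging the base-change formulas fixes this, because the missing input is genuinely the existence of a specific prime $\fq\in\res_f^{-1}(\fp)$.

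The paper produces this prime explicitly via the retractive descent proposition immediately preceding the corollary: take $g\colon S\to R$ to be the retraction, so that $gf\simeq\id_R$ trivially is biconservative and satisfies Quillen lifting (with $\cQ_{gf}(\fp)=\fp$). The key computation — and the step your proposal is missing — is that stratification of $\Mod_S$ gives the Avrunin--Scott identity $\supp_R(M)=\supp_R(g^*f^*M)=\res_g^{-1}\supp_S(f^*M)$ (\cite[Proposition~3.14]{bchv1} applied to $g$), whence $\res_g\supp_R(M)\subseteq\supp_S(f^*M)$, and dually $\res_g\cosupp_R(N)\subseteq\cosupp_S(f^!N)$. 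Consequently $\cQ_f(\fp):=\res_g(\fp)$ lies in $\res_f^{-1}(\fp)\cap\supp_S(f^*M)\cap\cosupp_S(f^!N)$ and is the required Quillen lift. Note the essential asymmetry: stratification of the \emph{target} $\Mod_S$ is what lets you apply Avrunin--Scott to $g$, which is why the argument is not circular. If you want to salvage your outline, replace the Hom-object retract by this support-theoretic use of the retraction $g$.
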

\begin{proof}
In the previous proposition, let $g$ be the section to $f$; the claim follows.
\end{proof}

We also have a partial converse to this result:

\begin{prop}\label{rem:products}
Suppose $R = \prod_{i \in I}R_i$ is a finite product of Noetherian commutative ring spectra, so that $\Mod_R \simeq \prod_{i \in I}\Mod_{R_i}$ as symmetric monoidal categories. Then $\Mod_R$ is stratified if and only if the categories $\Mod_{R_i}$ are stratified for all $i \in I$. 
\end{prop}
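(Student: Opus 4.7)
The plan is to exploit the symmetric monoidal equivalence $\Mod_R\simeq\prod_{i\in I}\Mod_{R_i}$ to show that stratification is detected factor-wise. The central observation is that both the homogeneous spectrum and the local cohomology functors $\Gamma_{\fp}$ decompose along the product, so minimality of each $\Gamma_{\fp}\Mod_R$ reduces to a statement about a single factor.

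First, I would identify the homogeneous spectrum. Since $I$ is finite, the ring $\pi_*R=\prod_{i\in I}\pi_*R_i$ carries orthogonal idempotents $e_i\in\pi_0R$, and every homogeneous prime $\fp$ of $\pi_*R$ contains all but exactly one $e_i$. This yields a disjoint decomposition
\[
\Spec^h(\pi_*R)\;=\;\bigsqcup_{i\in I}\Spec^h(\pi_*R_i),
\]
where $\fp$ corresponds to $\fp_i=\fp\cap\pi_*R_i$ in the unique factor with $e_i\notin\fp$.

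Next, I would analyze the functor $\Gamma_{\fp}$ under the product decomposition $M=(M_j)_{j\in I}\in\prod_j\Mod_{R_j}$. For $\fp$ corresponding to $\fp_i$, the localization $L_{\cZ(\fp)}$ inverts $e_i\notin\fp$, and since the $e_i$ are orthogonal idempotents this projects onto the $i$-th factor; combined with the functor $\Gamma_{\cV(\fp)}$, which is built from Koszul objects whose generators may be chosen to lie in $\pi_*R_i$ (after absorbing the idempotents $e_j$, $j\neq i$, into the generating set of $\fp$), one obtains
\[
\Gamma_{\fp}M\;\simeq\;\bigl(0,\ldots,\Gamma_{\fp_i}M_i,\ldots,0\bigr).
\]
Hence $\Gamma_{\fp}\Mod_R$ identifies, under the product decomposition, with $\Gamma_{\fp_i}\Mod_{R_i}$ sitting inside the $i$-th factor.

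Finally, minimality transfers across this identification: localizing subcategories of $\Mod_R$ contained in $\Mod_{R_i}$ are exactly the localizing subcategories of $\Mod_{R_i}$ extended by zero on the other factors, so $\Gamma_{\fp}\Mod_R$ is minimal in $\Mod_R$ if and only if $\Gamma_{\fp_i}\Mod_{R_i}$ is minimal in $\Mod_{R_i}$. Varying $\fp$ over $\Spec^h(\pi_*R)$ yields the desired equivalence. The only real subtlety is the compatibility of the Koszul and localization constructions with the product decomposition; this is routine since the BIK framework is functorial in the $\pi_*R$-action on $\Mod_R$, which decomposes as a product of the canonical $\pi_*R_i$-actions on $\Mod_{R_i}$, so the main obstacle amounts to careful bookkeeping rather than any genuine difficulty.
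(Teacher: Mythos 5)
Your proof is correct and follows essentially the same route as the paper: identify $\Spec^h(\pi_*R)$ with the disjoint union of the $\Spec^h(\pi_*R_i)$, observe that $\Gamma_{\fp}$ is concentrated in the corresponding factor, and transfer minimality of $\Gamma_{\fp}\Mod_R$ to minimality of $\Gamma_{\fp_i}\Mod_{R_i}$. The only cosmetic difference is that the paper deduces the \emph{only if} direction from its retract corollary (\cref{cor:retract}), whereas your prime-by-prime identification handles both directions at once; both are fine.
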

\begin{proof}
Since $R = \prod_{i \in I}R_i$, there is a corresponding decomposition $\pi_*R \cong \prod_{i \in I}\pi_*R_i$. Moreover, the prime ideals of $\pi_*R$ are of the form $\prod_{i \in I} \fp_i$ where for some $k$ the ideal $\fp_k$ is a prime ideal of $\pi_*R_k$, and for $i \ne k$ we have $\fp_i = \pi_*R_i$. By the construction of $\Gamma_{\fp}$, given a prime ideal $\fp \in \Spec^h(\pi_*R)$ as above, the localizing subcategory $\Gamma_{\fp}\Mod_R$ is minimal if and only if the localizing subcategory $\Gamma_{\fp_k}\Mod_{R_k}$ is minimal. It follows that if $\Mod_{R_i}$ is stratified for each $i \in I$, then so is $\Mod_R$. The \emph{only if} direction is a consequence of \Cref{cor:retract}.
\end{proof}

\subsection{Examples}\label{ssec:stratexamples}

We finish this section with some examples of ring spectra for which we can check that $\Mod_R$ is stratified using the criteria established above. In these cases, it is often easier to check directly that simple Quillen lifting is satisfied when compared to Quillen lifting. 

\begin{ex}
Consider the complexification morphism $f \colon KO \to KU$ from real $K$-theory to complex $K$-theory. On homotopy the induced map 
\[
\xymatrix{\pi_*KO \cong \Z[\eta,\alpha,\beta^{\pm 1}] \ar[r] & \Z[u^{\pm 1}] \cong \pi_*KU}
\]
is given by $\eta \mapsto 0, \alpha \mapsto 2u^2$ and $\beta \mapsto u^4$. It follows that this map is an $\mathcal{N}$-isomorphism, so simple Quillen lifting is satisfied by \Cref{lem:nilpotentCoker}.

The cofiber sequence of $KO$-modules
\[
\Sigma^1 KO \xr{\eta} KO \xr{f} KU
\]
shows that $f$ is a finite morphism, and nilpotence of $\eta$ shows that induction along $f$ is conservative, see \cite[Proposition 5.3.1]{rognes_galois}. By \Cref{rem} coinduction along $f$ is also conservative. Moreover, the homotopy groups of $KU$ are concentrated in even degrees, and satisfy the sufficient conditions for stratification given in \cite[Theorem 1.3]{affineweaklyregular}. It follows that $\Mod_{KU}$ is stratified. By the preceding discussion and \Cref{thm:stratification} we deduce that $\Mod_{KO}$ is stratified. 

We note that the same argument works in the $2$-complete setting, so $\Mod_{KO^{\wedge}_2}$ is stratified. 
\end{ex}

\begin{ex}
Suppose $R$ is a connective (graded) Noetherian commutative ring spectrum with the property that $\pi_iR = 0$ for $i \gg 0$. Consider the morphism $f\colon R \to \tau_{\le 0}R \simeq H\pi_0R$. In the language of \cite{mathew_galois} this is a descendable morphism, i.e., $R\in  \Thick_R( H\pi_0R)$, see \cite[Proposition 3.34]{mathew_galois}. By \Cref{lem:chouinardloc}, the morphism $f$ is biconservative. Moreover, the map $\pi_*f$ is surjective, so $\res_f\colon \Spec^h(\pi_*R) \to \Spec^h(\pi_0R)$ is injective, and hence simple Quillen lifting is satisfied. Note that since $\pi_*R$ is assumed to be graded Noetherian, $\pi_0R$ is Noetherian, and each $\pi_iR$ is finitely-generated as a $\pi_0R$-module \cite[Lemma 4.12]{brunsgub}. By Neeman's result \cite[Theorem 2.8]{neemanchromtower} and the equivalence $\Mod_{H\pi_0R} \simeq \cD(H\pi_0R)$, we thus see that $\Mod_{H\pi_0R}$ is stratified. It follows from \Cref{thm:stratification} that $\Mod_R$ is also stratified.
\end{ex}

\section{Cochains on classifying spaces}\label{sec:kmkm}
In this section, we specialize the work on stratification in the previous sections to the case of cochains on a space $X$, in particular, the case where $X$ is the classifying space of a topological group. Using work of Broto and Kitchloo \cite{broto_kitchloo} we prove Theorem A of the introduction, namely that $\Mod_{C^*(BK)}$ is stratified for $K$ a Kac--Moody group. 

\subsection{Cochains on topological spaces}\label{ssec:cochains}
Let $R$ be a ring spectrum $R$ and let $X$ be a topological space. We write $C^*(X;R)$ for the ring spectrum of $R$-valued cochains on $X$, i.e., the function spectrum $\Hom_{\Sp}(\Sigma_+^{\infty}X,R)$. In particular, there is an isomorphism $\pi_*C^*(X;R) \cong R^{-*}(X)$, the $R$-cohomology of $X$. The multiplication of two elements $x,y \in C^*(X;R)$ is given by the composite 
\[
\xymatrix{\Sigma_+^{\infty}X \ar[r]^-{\Delta} & \Sigma_+^{\infty}X \otimes \Sigma_+^{\infty}X \ar[r]^-{x \otimes y} & R \otimes R \ar[r]^-m & R,} 
\]
where the first map is induced by the diagonal of $X$ and the last map $m$ is the multiplication on the ring spectrum $R$. If $R$ is commutative, then $C^*(X;R)$ inherits the structure of an augmented commutative $R$-algebra.

In this paper, we are mostly interested in the case when $R =Hk$ is the Eilenberg--Mac Lane spectrum of a field $k$ of characteristic $p >0$ for a fixed prime $p$ and that $X$ is a connected space with $H^*(X;k)$ Noetherian. If $X$ is additionally assumed to be $p$-good, then the canonical map $X \to X^\wedge_p$ induces an equivalence $C^*(X^\wedge_p;k)  \simeq C^*(X;k)$ of commutative ring spectra, see for example \cite[Lemma 4.8]{bchv1}. When $k=\F_p$, we will usually omit it from the notation.

\subsection{Chouinard's theorem for classifying spaces}
We recall that in \Cref{sec:quillen} we isolated two conditions to descend stratification along a morphism $f \colon R \to S$ of ring spectra, namely that induction and coinduction along $f$ are conservative, and that $f$ satisfies simple Quillen lifting. Suppose now that $R = C^*(BG)$ for a topological group $G$. In order to construct a suitable morphism $f \colon C^*(BG) \to S$ as above, we  associate to $G$ a category, first considered by Quillen \cite{quillen_stratification}. 

\begin{defn}\label{defn:quillencat}
Let $G$ be a topological group. The category $\cal{A}_p(G)$ has objects the elementary abelian $p$-subgroups of $G$, and morphisms are group homomorphisms induced by subconjugation in $G$. 
\end{defn}

Suppose that $\cal{A}_p(G)$ is equivalent to a finite category. This is true, for example, when $G$ is a compact Lie group \cite[Lemma 6.3]{quillen_stratification}. Let $\cal{E}_p(G)$ denote a set of representatives of conjugacy classes of elementary abelian $p$-subgroups of $G$, then the inclusions $E \le G$ define a morphism
\[
\xymatrix{q_G \colon C^*(BG) \ar[r] & \prod_{E \in \cal{E}_p(G)} C^*(BE).}
\]
We would like to know when induction and coinduction along $q_G$ are conservative. 
\begin{rem}
 When $G$ is a finite group, this is essentially Chouinard's theorem~\cite{chouinard} about projective representations of groups. Indeed, let $\Mod_{kG}$ be the abelian category of $k$-linear $G$-representations. Chouinard proves that $M \in \Mod_{kG}$ is projective if and only if the restrictions of $M$ to all elementary abelian subgroups of $G$ are projective. Equivalently, an object $M$ in the stable module category $\StMod_{kG}$ of $G$ is trivial if and only if it becomes trivial in $\StMod_{kE}$ for $E \in \cE_p(G)$. In this form, Chouinard's theorem can be reinterpreted homotopically, at least for finite $p$-groups: $\StMod_{kG}$ embeds fully faithfully into the homotopy category $K(\Inj_{kG})$ of unbounded injective $kG$-modules, which in turn is equivalent to $\Mod_{C^*(BG)}$:
\[
\xymatrix{\StMod_{kG} \ar@{^{(}->}[r] & K(\Inj_{kG}) \ar[r]^-{\sim} & \Mod_{C^*(BG)}.}
\]
Benson, Iyengar, and Krause~\cite[Proposition 9.6]{bik_finitegroups} then prove that Chouinard's theorem extends to the statement that the functor 
\[
\xymatrix{q_G^*\colon \Mod_{C^*(BG)} \ar[r] & \prod_{E \in \cE_p(G)}\Mod_{C^*(BE)}}
\]
is conservative, where $q_G^*$ denotes induction along the map $q_G \colon C^*(BG) \to \prod_{E \in \cal{E}_p(G)} C^*(BE)$ induced by the subgroup inclusions $E \to G$.  
 \end{rem} 

\begin{ex}\label{ex:ChouinardcompactLie}
When $G$ is a compact Lie group, Benson and Greenlees show in \cite[Theorem 3.1(i)]{bg_stratifyingcompactlie} that $C^*(BG)$ is in the thick subcategory generated by the $C^*(BG)$-modules $C^*(BE)$ for $E \le G$, hence induction and coinduction along $q_G$ are conservative by \Cref{lem:chouinardloc}. 
\end{ex}

\begin{rem}
\Cref{ex:ChouinardcompactLie} shows that conservativity of induction and coinduction along a morphism $f \colon R \to S$ can be satisfied even when $S$ is a not a compact $R$-module. Indeed, if $\pi_0G$ is not a finite $p$-group and $E \le G$ is some elementary abelian, then $C^*(BE)$ is not necessarily finite as a $C^*(BG)$-module. An explicit example is given by $\Z/2 \times \Z/2 \leq A_4$. 
\end{rem}

\begin{ex}\label{ex:isotropy}
Let $G$ be a compact Lie group, $X$ a finite $G$-CW complex. As a generalization of the previous example, consider the category $\cal{A}_G(X)$ whose objects are homotopy classes of $G$-equivariant maps $\iota\colon G/E \to X$, where $E$ is an elementary abelian $p$-subgroup of $G$, and morphisms from $\iota \colon G/E \to X$ to $\iota' \colon G/E' \to X$ are $G$-equivariant maps $f\colon G/E\rightarrow G/E'$ such that $\iota'\circ f\simeq \iota$ (see \cite[Section 8]{quillen_stratification}). For any $(\iota\colon G/E \to X) \in \cal{A}_G(X)$, there is an induced map  $C^*(X_{hG}) \to C^*(BE)$ obtained by applying cochains to the Borel construction on $\iota$. These maps assemble together to give a map
\[
\xymatrix{q_{G,X} \colon C^*(X_{hG}) \ar[r] & \prod_{E \in \cal{E}_G(X)} C^*(BE),}
\]
where $\cal{E}_G(X)$ denotes a set of representatives of isomorphism classes of objects in $\cal{A}_G(X)$. Using the techniques of Benson and Greenlees \cite[Theorem 3.1(i)]{bg_stratifyingcompactlie}, Cameron \cite[Proposition 5.4.8]{cameron_2018} has shown that $q_{G,X}$ is biconservative. In particular, \cite[Lemma 5.4.11]{cameron_2018} shows that $C^*(X_{hG})$ is in the thick subcategory of $\Mod_{C^*(X_{hG})}$ generated by the set of $C^*(X_{hG})$-modules $\{ C^*(EG \times_G G/E) \colon (G/E \to X) \in \cal{A}_G(X)\}$, so that \Cref{lem:chouinardloc} applies. Note that if $X$ is contractible, then one recovers \Cref{ex:ChouinardcompactLie}. One can also show that $q_{G,X}$ satisfies Quillen lifting (this is essentially \cite[Proposition 5.4.13]{cameron_2018}), and so deduce that $\Mod_{C^*(X_{hG})}$ is stratified by $H_G^*(X)$, see \cite[Theorem 5.4.1]{cameron_2018}.

Taking $X = G$, considered as a $G$-space with conjugation action, the Borel construction $G_{hG}$ is homotopy equivalent to $\Lambda (BG)$, the free loop space on $BG$ \cite{Smith1981characteristic}. 
\end{ex}

\begin{ex}\label{ex:fusion}
Suppose $\cal{F}$ is a saturated fusion system on a finite $p$-group $S$ (see \cite[Section 1]{blo_fusion}).   Broto, Levi and Oliver introduced the notion of a classifying space for a fusion system, and Chermak \cite{chermak_existence} showed that it always exists and is unique up to homotopy equivalence. Then we can associate a classifying space $B\cal{F}$ which comes equipped with a canonical map $\theta \colon BS \to B\cal{F}$, inducing  $\theta^{\ast} \colon C^*(B\cal{F}) \to C^*(BS)$ which is split as a map of $C^*(B\cal{F})$-modules \cite{ragnarsson_transfer,rag_transfer_diagram}. 

Let $\cal{E}_p(\cal{F})$ (resp. $\cal{E}_p(S)$) be a set of representatives of $\cal{F}$-conjugacy (resp. $S$-conjugacy) classes of elementary abelian $p$-subgroups in $S$.  
We obtain a morphism  
\[
\xymatrix{q_{\cal{F}} \colon C^*(B\cal{F}) \ar[r] & \prod_{E \in \cal{E}_p(\cal{F})} C^*(BE)}
\]
induced by the inclusions $\iota_E\colon E\leq S$. Those representatives can be chosen in a way that the map $\pi\colon \cE_p(S)\to \cE_p(\cF)$, which assigns a representative in the $\cal{F}$-conjugacy class, is a surjection. Consider the resulting decomposition $\cE_p(S) = \coprod_{E \in \cE_p(\cF)}\pi^{-1}(E)$. For each $V\in \pi^{-1}(E)$ there is an isomorphism $f\colon V\rightarrow E$ in $\cal{F}$. We can assemble them in a morphism 
\[
\xymatrix{q_E \colon C^*(BE) \ar[r] & \prod_{V \in \pi^{-1}(E)} C^*(BV).}
\]
Since $f\in \cal{F}$, we have $f^*\circ \iota_E^*\circ \theta^*\simeq \iota_V^*\circ \theta^*$, so it follows that the following diagram commutes: 
\[
\xymatrix{C^*(B\cal{F}) \ar[r]^-{\theta^\ast} \ar[rd]_{q_\cal{F}} & C^*(BS)  \ar[r]^-{q_S} & \prod_{V \in \cE_p({S})} C^*(BV) \ar[d]^-{\simeq} \\
&\prod_{E \in \cal{E}_p(\cal{F})} C^*(BE) \ar[r]^-{\prod q_E} & \prod_{E \in \cal{E}_p(\cal{F})}\prod_{V \in \pi^{-1}(E)} C^*(BV).}
\]
Since $\theta^{\ast}$ is split, induction and coinduction along it are conservative by \Cref{lem:chouinardloc}. Moreover, $q_S$ is biconservative, because $S$ is a finite $p$-group (see \Cref{ex:ChouinardcompactLie}). By transitivity, the composite is therefore biconservative as well. But by \Cref{lemma:chouinard-transitivity} this implies that $q_\cal{F}$ is also biconservative.
\end{ex}

\begin{rem}
	More generally, one can consider saturated fusion systems $\cal{F}$ on a discrete $p$-toral group $S$, i.e., a group that fits in an extension
	\[
\xymatrix{1 \ar[r] & (\Z/p^{\infty})^r \ar[r] & S \ar[r] & \pi \ar[r] & 1}
\]
where $r \ge 0$ is finite, and $\pi$ is a finite $p$-group. As in the case $r = 0$ considered in \Cref{ex:fusion}, there is an associated classifying space $B\cal{F}$ \cite{blo_pcompact,ll_uniqueness} and a canonical map $\theta \colon BS \to B\cal{F}$. The induced map $\theta^* \colon C^*(B\cal{F}) \to C^*(BS)$ is split as a map of $C^*(B\cal{F})$-modules by \cite[Proposition 4.24]{bchv1}. The composite 
\[
\xymatrix{C^*(B\cal{F}) \ar[r]^-{\theta^\ast} & C^*(BS) \ar[r]^-{q_S} & \prod_{V \in \cal{E}_p(S)}C^*(BV)}
\]
is known to be biconservative when $\cal{F}$ models a compact Lie group, a connected $p$-compact group, or when $S$ is a finite $p$-group as in \Cref{ex:fusion}, see \cite[Corollary 4.20 and Theorem 4.25]{bchv1}. We also have the map $q_{\cal{F}}$ as in \Cref{ex:fusion}, and a similar argument shows that $q_{\cal{F}}$ is biconservative if and only if $q_S$ is biconservative.
\end{rem}

\subsection{A hierarchy of spaces}\label{ssec:hierarchy}

In \cite{kropholler_mislin} Kropholler and Mislin introduced a class of hierarchically decomposable groups. Given a class $\mathcal X$ of groups, 
they define $H_1\mathcal X$ to be the class of groups $G$ which admit a finite dimensional $G$-CW-complex $X$ with cell stabilizers in $\mathcal X$.  Then $H\mathcal X$ is defined as the smallest class $\mathcal J$ containing $\mathcal X$ such that $H_1\mathcal J=\mathcal J$. Of special interest is the class $\mathcal X$ of 
finite groups. 

Inspired by this, Broto and Kitchloo \cite{broto_kitchloo} introduced the following class of groups. Let $\mathcal X$ be a class of compactly generated Hausdorff topological groups and let $p$ be a fixed prime.
Define $\mathcal K_1 \mathcal X$ to the class of compactly generated Hausdorff
topological  groups $G$ for which there exists a finite $G$-CW-complex $X$ such that:
\begin{enumerate}
\item the isotropy subgroups of $X$ belongs to $\mathcal X$, and
\item for each finite $p$-subgroup $\pi <G$, the fixed point space $X^\pi$ is $p$-acyclic.
\end{enumerate}
 
 \begin{rem}\label{rem:Iso}
 The second condition in the definition applied to the trivial subgroup implies that  $X$ is mod $p$ acyclic and then $C^*(X_{hG})\simeq C^*(BG)$. Moreover, from the proof of \cite[Lemma 5.4.10]{cameron_2018} we see that  $C^*(BG)\in \Thick(C^*(BH)|H\in \operatorname{Iso}_G(X))$, where $\operatorname{Iso}_G(X)$ denotes a set of representatives of $G$-conjugacy classes of isotropy subgroups of the $G$-action on $X$ (which are compactly generated Hausdorff topological groups by the definition of $\cal{K}_1\cal{X}$).
 \end{rem}
  
In what follows $\mathcal X$ is the class of compact Lie groups. Examples of groups in $\mathcal K_1 \mathcal X$ are given by Kac--Moody groups and infinite Coxeter groups, see \Cref{ssec:kacmoody}. For any $G \in \mathcal K_1 \mathcal X$ we let $\cal{A}_p(G)$ denote the Quillen category associated to $G$, and $\cal{E}_p(G)$ a set of representatives for conjugacy classes of elementary abelian $p$-subgroups. As with the case of a compact Lie group, we can construct a morphism 
\[
\xymatrix{q_G \colon C^*(BG) \ar[r] & \prod_{E \in \cal{E}_p(G)}C^*(BE).} 
\] 

\begin{prop}\label{prop:indcoinductionk1x}
Let $G\in \mathcal K_1 \mathcal X$, then induction and coinduction along $q_G$ are conservative. 
\end{prop}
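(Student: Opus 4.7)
The plan is to show that $C^*(BG)$ lies in the thick subcategory of $\Mod_{C^*(BG)}$ generated by $\{C^*(BE) \mid E \in \mathcal{E}_p(G)\}$, from which biconservativity of $q_G$ follows immediately via \Cref{lem:chouinardloc}. This mirrors the classical Benson--Greenlees strategy in \Cref{ex:ChouinardcompactLie}, but bootstrapped one level up the Broto--Kitchloo hierarchy using the $G$-space $X$ afforded by $G\in\mathcal K_1\mathcal X$.

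First, I would invoke the structure provided by \Cref{rem:Iso}: since $X^\pi$ is $p$-acyclic for $\pi$ trivial, one has $C^*(X_{hG})\simeq C^*(BG)$, and Cameron's argument yields
\[
C^*(BG)\in\Thick_{C^*(BG)}\bigl(C^*(BH)\mid H\in \operatorname{Iso}_G(X)\bigr),
\]
where each isotropy subgroup $H$ is a compact Lie group because $\mathcal X$ is taken to be the class of compact Lie groups.

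Second, for each such compact Lie group $H$, the Benson--Greenlees theorem cited in \Cref{ex:ChouinardcompactLie} provides the containment
\[
C^*(BH)\in\Thick_{C^*(BH)}\bigl(C^*(BE')\mid E'\in\mathcal{E}_p(H)\bigr).
\]
Restriction along the map of commutative ring spectra $C^*(BG)\to C^*(BH)$ induced by $BH\to BG$ is exact, hence carries this thick containment to $\Mod_{C^*(BG)}$. Since every elementary abelian $p$-subgroup $E'\leq H$ is elementary abelian in $G$ and is $G$-conjugate to a unique representative in $\mathcal{E}_p(G)$, we obtain
\[
C^*(BH)\in\Thick_{C^*(BG)}\bigl(C^*(BE)\mid E\in\mathcal{E}_p(G)\bigr).
\]

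Third, I would chain these two thick containments and conclude that $C^*(BG)\in\Thick_{C^*(BG)}\bigl(C^*(BE)\mid E\in\mathcal{E}_p(G)\bigr)$. In particular $C^*(BG)\in\Loc_{C^*(BG)}\bigl(\prod_{E\in\mathcal{E}_p(G)}C^*(BE)\bigr)$, so \Cref{lem:chouinardloc} applies to $q_G$ and yields that both induction and coinduction along $q_G$ are conservative.

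The main conceptual work is already absorbed by the hierarchy structure recorded in \Cref{rem:Iso}: once the mod $p$ acyclicity of $X$ (hence $C^*(X_{hG})\simeq C^*(BG)$) and Cameron's isotropy decomposition are in hand, the remainder of the argument is a purely formal transitivity of thick-subcategory inclusions along restriction of scalars, together with a direct appeal to the compact Lie group base case. No finiteness of the map $q_G$ is required, which is crucial since the index set $\mathcal{E}_p(G)$ need not be finite for $G\in\mathcal{K}_1\mathcal{X}$.
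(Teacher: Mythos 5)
Your proof is correct and rests on exactly the same two inputs as the paper's (the containment $C^*(BG)\in\Thick(C^*(BH)\mid H\in\operatorname{Iso}_G(X))$ from \Cref{rem:Iso} and the Benson--Greenlees compact Lie case of \Cref{ex:ChouinardcompactLie}); you merely repackage the argument as transitivity of thick subcategories followed by one application of \Cref{lem:chouinardloc}, whereas the paper composes the two biconservative maps and then factors the composite through $q_G$ via \Cref{lemma:chouinard-transitivity}, so the approaches are essentially the same. One small correction to your closing remark: $\mathcal{E}_p(G)$ \emph{is} finite for $G\in\mathcal{K}_1\mathcal{X}$ by Broto--Kitchloo (as the paper notes in the proof of \Cref{thm:stratk1x}), though your argument indeed does not need this since each $C^*(BE)$ is a retract of the product.
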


\begin{proof}\sloppy
Since $G \in \cal K_1 \mathcal X$, by definition, there exists a finite $G$-CW-complex $X$ whose isotropy subgroups are compact Lie groups, and such that $C^*(X_{hG}) \simeq C^*(BG)$.  Let $E\leq G$ be an elementary abelian $p$-subgroup, then $X^E$ is mod $p$ acyclic by hypothesis, and in particular is non-empty. Then $E\leq G$ is a subgroup of an isotropy subgroup of $G$, hence $E$ is subconjugated to some $H\in \operatorname{Iso}_G(X)$, where the latter is a set of representatives of $G$-conjugacy classes of isotropy subgroups of $X$. Let $\cE_p(H)$ be a set of representatives of $H$-conjugacy classes of elementary abelian $p$-subgroups in $H$ and write $\cal{E}_{\mathcal X}(G)=\{(H,E)|H\in  \operatorname{Iso}_G(X),\; E\in \mathcal E_p(H)\}$. Therefore, one can choose representatives in a way so that the natural map $\cal{E}_{\mathcal X}(G) \rightarrow \cal{E}_p(G)$ is surjective.

The composite  
\[
\xymatrix{q\colon C^*(BG) \ar[r]^-{q_{\operatorname{Iso}}} & \prod_{H \in \operatorname{Iso}_G(X)}C^*(BH) \ar[r]^-{\prod{q_H}} &  \prod_{H \in \operatorname{Iso}_G(X)}\prod_{\mathcal E_p(H)} C^*(BE),}
\]  
is biconservative: $q_{\operatorname{Iso}}$ is so by \Cref{rem:Iso} and $q_H$ by \Cref{ex:ChouinardcompactLie} since $H\leq G$ are compact Lie groups. Now the map $q$ factors through $q_G\colon C^*(BG) \to   \prod_{\mathcal E_p(G)} C^*(BE) $ by the same argument as in \Cref{ex:fusion} using conjugation by elements in $G$ instead of morphisms in the fusion systems. We conclude that $q_G$ is also biconservative.
\end{proof}

Let $\cV_{G}$ denote the homogeneous prime ideal spectrum of $H^*(BG)$. For an elementary abelian subgroup $E \leqslant G$, let $\cV_{E}^+ = \cV_{E} \setminus \bigcup_{E' < E} \res_{E}^{E'}\cV_{E'}$, where $\res_{E}^{E'}\colon \cV_{E'} \to \cV_{E}$ is the restriction map. Finally, we write $\cV_{G,E}^{+}= \res_{G}^{E} \cV_{E}^+$, to denote the restriction of $\cV_{E}^+$ induced by the map $H^*(BG) \to H^*(BE)$. 

\begin{thm}\label{thm:kmhfiso}
 Let $\mathcal X$ be the class of compact Lie groups and $G\in \mathcal K_1 \mathcal X$.
\begin{enumerate}
 \item (Broto--Kitchloo) $H^*(BG)$ is Noetherian and the Quillen map 
\[
\xymatrix{q_G\colon H^*(BG) \ar[r] & \displaystyle\varprojlim_{E\in \cal{A}_p(G)} H^*(BE)}
\]
is an $F$-isomorphism.
\item (Rector) The variety $\mathcal V_G$ admits a decomposition 
\[
\mathcal V_G = \coprod_{E\in \mathcal E_p(G)}\mathcal V^+_{G,E}
\]
where $\mathcal E_p(G)$ denotes a set of representatives for conjugacy classes of elementary abelian $p$-subgroups of $G$. 
\end{enumerate}
\end{thm}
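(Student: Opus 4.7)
The plan is to reduce both statements to the well-known versions for compact Lie groups: for (1), Quillen's original $F$-isomorphism theorem and the Venkov/Evens--Quillen finiteness theorem, and for (2), Rector's original stratification of the Quillen variety for compact Lie groups. The reduction is carried out by exploiting the hierarchical structure of $\mathcal{K}_1 \mathcal{X}$ in essentially the way it was already used in \Cref{prop:indcoinductionk1x}.

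For part (1), I would start by unpacking the definition of $\mathcal K_1\mathcal X$: there exists a finite $G$-CW complex $X$ with compact Lie isotropy and with $X^\pi$ mod $p$ acyclic for every finite $p$-subgroup $\pi\leq G$. Taking $\pi=1$ gives that $X$ itself is mod $p$ acyclic, so that the collapse map $X_{hG}\to BG$ is a mod $p$ equivalence and $H^*(BG)\cong H^*_G(X)$. Next I would run the isotropy spectral sequence / Bredon filtration of $X_{hG}$ by skeleta, whose $E_2$-page is built out of the cohomologies $H^*(BH)$ for $H\in\operatorname{Iso}_G(X)$. Because there are only finitely many $G$-orbits of cells and each isotropy $H$ is compact Lie (so $H^*(BH)$ is a finitely generated graded algebra by Quillen--Venkov), a standard finiteness argument then forces $H^*(BG)$ to be Noetherian and in fact finite as a module over the image of $\prod H^*(BH)$. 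For the $F$-isomorphism, the key observation is that every elementary abelian $p$-subgroup $E\leq G$ has $X^E$ mod $p$ acyclic, hence non-empty, so $E$ is subconjugate to some $H\in\operatorname{Iso}_G(X)$. Therefore $\cal A_p(G)$ is covered by the categories $\cal A_p(H)$, and composing the compact-Lie Quillen $F$-isomorphisms for the isotropy subgroups with the map $H^*(BG)\to\prod H^*(BH)$ gives the desired $F$-isomorphism into $\varprojlim_{\cal A_p(G)} H^*(BE)$; a short diagram chase upgrades this to an $F$-isomorphism onto the inverse limit, using that both kernel elements and missing $p$-powers come from the compact Lie case.

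For part (2), I would deduce it from (1) almost formally. An $F$-isomorphism of graded commutative $\F_p$-algebras induces a homeomorphism on $\Spec^h$; so part (1) gives
\[
\cal V_G \;\cong\; \mathrm{colim}_{E\in \cal A_p(G)^{\mathrm{op}}}\;\cal V_E
\]
in the category of topological spaces. Since $\cal A_p(G)$ has only finitely many isomorphism classes (by the Noetherianness proved in (1), there is only a finite number of conjugacy classes of elementary abelians up to a fixed rank, and rank is bounded), the colimit is indexed by a finite poset and can be analyzed by stripping off maximal elementary abelians. Each point of $\cal V_G$ is then shown to come from a unique $E\in \cE_p(G)$ by defining $E$ to be of minimal rank such that the point lies in $\res^G_E(\cal V_E)$; that point then lies in $\cal V_E^+$, giving the disjoint decomposition.

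The main obstacle is in part (1): justifying the $F$-isomorphism rigorously, since the isotropy filtration spectral sequence is a priori only a statement about abutments up to associated graded, whereas $F$-isomorphism is a multiplicative statement. Broto--Kitchloo handle this by a careful argument matching nilpotent and $p$-power elements between $H^*(BG)$, the isotropy contributions, and the Quillen category, and I would simply import their argument. The Noetherianness is by contrast essentially formal once the isotropy decomposition is in place.
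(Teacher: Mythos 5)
Part (1) is unproblematic: like the paper, you ultimately defer to Broto--Kitchloo for both the Noetherianness and the $F$-isomorphism (the paper simply cites their Theorems 4.8 and 4.1), and your isotropy-filtration sketch is consistent with how their argument runs.

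The genuine gap is in part (2), which you claim follows ``almost formally'' from (1). The $F$-isomorphism does give a homeomorphism $\cV_G \cong \Spec^h\bigl(\varprojlim_{E\in\cal{A}_p(G)} H^*(BE)\bigr)$, but the further identification of this space with $\colim_{E}\cV_E$, equivalently with $\coprod_{(E)}\cV^+_{G,E}$, is \emph{not} formal: $\Spec^h$ does not convert inverse limits of rings over a category into colimits of spaces. Establishing precisely this identification is the content of Rector's Theorem 2.6, and it requires verifying the hypotheses (1)--(5) of his Proposition 2.3 for the pair $(\cal{A}_p(G),H^*(B-))$; that verification is what the paper's proof of (2) actually consists of (following the analogous argument for $p$-local compact groups in \cite[Theorem 5.6]{bchv1}). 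Relatedly, your disjointness argument --- take $E$ of minimal rank with the given prime in $\res^G_E(\cV_E)$ --- only yields that the sets $\cV^+_{G,E}$ \emph{cover} $\cV_G$; it does not rule out two non-conjugate minimal $E$'s hitting the same prime, which is the hard half of Quillen--Rector stratification. Reducing to the compact Lie isotropy subgroups does not repair this: two elementary abelians sitting inside non-conjugate isotropy subgroups $H$ and $H'$ could a priori have the same image in $\cV_G$ without being conjugate in $G$, and Quillen's theorem applied to $H$ and $H'$ separately says nothing about conjugacy in $G$. You need either to verify Rector's axioms and invoke his Theorem 1.7/2.6, as the paper does, or to reproduce Quillen's inseparable-isogeny argument at the level of the inverse limit.
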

\begin{proof}
The first part is proved by Broto and Kitchloo. The fact that  $H^*(BG)$ is Noetherian is shown in \cite[Theorem 4.8]{broto_kitchloo}, while the $F$-isomorphism is proved in \cite[Theorem 4.1]{broto_kitchloo}. 

For the second part, we can use the methods of Rector \cite{rector_quillenstrat} to deduce the decomposition of the variety $\mathcal V_G$ from the ${F}$-isomorphism theorem as in \cite[Theorem 5.6]{bchv1}.  In particular, we claim that the Conditions (1) -- (5) of \cite[Proposition 2.3]{rector_quillenstrat} are satisfied for the pair $(\cal{A}_p(G),H^*(B-))$, with proofs analogous to that given in the proof of Theorem 5.6 of \cite{bchv1}. 

It follows from \cite[Proposition 2.5 and Theorem 2.6]{rector_quillenstrat} that $\Lambda = \varprojlim_{E\in \cal{A}_p(G)} H^*(BE)$ is a reduced, Noetherian unstable algebra over the Steenrod algebra. The proof of \cite[Theorem 2.6]{rector_quillenstrat} shows moreover that $\cal{V}_{\Lambda} \cong \coprod_{E\in \mathcal E_p(G)}\mathcal V^+_{G,E}$. Since an $F$-isomorphism induces an isomorphism on varieties, we deduce from this and the first part of the theorem that 
\[
\cal{V}_G \cong \cal{V}_{\Lambda} \cong  \coprod_{E\in \mathcal E_p(G)}\mathcal V^+_{G,E},
\]
  as required.
\end{proof}

We also need the next result---the stratification result follows from \cite{bik_finitegroups}, as in \cite[Theorem 4.2(i)]{bg_stratifyingcompactlie}, while the Avrunin--Scott identities are then a consequence of \Cref{cor:as}. 

\begin{thm}[Benson--Iyengar--Krause]\label{thm:stratcompactLie}
 For any elementary abelian $p$-subgroup $E$, the category $\Mod_{C^*(BE)}$ is stratified by $H^*(BE)$.
Moreover,  for any morphism $E' \to E$ of elementary abelian $p$-subgroups with induced morphism $f \colon C^*(BE) \to C^*(BE')$, the Avrunin--Scott identities are satisfied, i.e., 
\[
\begin{split}
\supp_{C^*(BE')}(f^* M) &= \res_{f}^{-1}\supp_{C^*(BE)}(M) \quad \\
\cosupp_{C^*(BE')}(f^! M) &= \res_{f}^{-1}\cosupp_{C^*(BE)}(M)
\end{split}
\]
for all $M \in \Mod_{C^*(BE)}$. 
\end{thm}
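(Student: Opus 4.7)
My plan is to deduce each statement from results that are already in the literature and in Section 2 of the paper; no new content is required.

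For the first claim, I would appeal to the main stratification theorem of Benson--Iyengar--Krause \cite{bik_finitegroups} applied to the finite $p$-group $E$: they prove stratification of $\StMod_{kE}$ by the action of $H^*(E;\F_p)$. To transfer this to $\Mod_{C^*(BE;\F_p)}$, I would use the standard recollement relating $D(kE)$, $K(\Inj_{kE})$, and $\StMod_{kE}$, together with the equivalence $K(\Inj_{kE}) \simeq \Mod_{C^*(BE;\F_p)}$ valid since $E$ is a finite $p$-group, and Neeman's classification \cite{neemanchromtower} for $D(kE)$. Combining these pieces propagates stratification through the recollement and produces stratification of $\Mod_{C^*(BE;\F_p)}$ by $H^*(BE;\F_p)$. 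This entire transfer has already been carried out for compact Lie groups by Benson--Greenlees in \cite[Theorem 4.2(i)]{bg_stratifyingcompactlie}, so the cleanest proof is simply to cite their result in the elementary abelian special case.

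Given the first claim, the Avrunin--Scott identities should follow formally. Applying the stratification assertion to both $E$ and $E'$ gives that $\Mod_{C^*(BE)}$ and $\Mod_{C^*(BE')}$ are stratified by their respective cohomology rings. By Proposition 3.14 of \cite{bchv1} (used in the proof of \Cref{cor:as} above), stratification of the source of a morphism $f\colon R \to S$ of Noetherian commutative ring spectra automatically implies the Avrunin--Scott identities
\[
\supp_S(f^*M) = \res_f^{-1}\supp_R(M), \qquad \cosupp_S(f^!M) = \res_f^{-1}\cosupp_R(M)
\]
for every $M \in \Mod_R$, with no biconservativity or Quillen lifting hypothesis on $f$ required. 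Applying this with $R = C^*(BE)$ and $S = C^*(BE')$ concludes the proof.

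I do not foresee any genuine obstacle; the content sits in \cite{bik_finitegroups} and the recollement argument of \cite{bg_stratifyingcompactlie}. If there is a mild technical point, it is the identification $\Mod_{C^*(BE;\F_p)} \simeq K(\Inj_{kE})$ and the fact that stratification propagates through the resulting recollement, but both ingredients are well documented and are invoked directly in \cite{bg_stratifyingcompactlie}. The role of this theorem in the paper is essentially to package these known inputs in the precise form needed as the base case for the descent machinery of Sections 1 and 2.
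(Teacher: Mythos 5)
Your proposal is correct and follows essentially the same route as the paper, which likewise deduces stratification of $\Mod_{C^*(BE)}$ from \cite{bik_finitegroups} as in \cite[Theorem 4.2(i)]{bg_stratifyingcompactlie} and then obtains the Avrunin--Scott identities from stratification of the source via \cite[Proposition 3.14]{bchv1} (packaged in the paper as \Cref{cor:as}). If anything, your phrasing is slightly more careful: you invoke \cite[Proposition 3.14]{bchv1} directly, which requires no biconservativity hypothesis, whereas the literal hypotheses of \Cref{cor:as} would fail for a proper inclusion $E' < E$, since $\res_f$ is then not surjective.
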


\begin{cor}\label{cor:simplequillenliftingk1x}
Let $G\in \mathcal K_1 \mathcal X$, then 
\[
\xymatrix{q \colon C^*(BG) \ar[r] & \displaystyle\prod_{E\in \mathcal  E_p(G)} C^*(BE)}
\]
satisfies simple Quillen lifting. 
\end{cor}
\begin{proof}
The argument is similar to the one given in \cite[Theorem 5.10]{bchv1}, for which we refer the reader for more details. We have to verify the conditions of \cref{ssec:criterionsql} applied to the morphism $q$. By \cref{thm:stratcompactLie}, $\Mod_{C^*(BE)}$ is stratified for all elementary abelian groups $E$. Quillen stratification as in \cref{thm:kmhfiso} provides a surjection
\[
\coprod_{E\in \cE_p(G)}\cV_{E} \twoheadrightarrow \coprod_{E\in \mathcal E_p(G)}\cV^+_{G,E} = \cV_G.
\]
Any set-theoretic section to this map then gives a choice of function $\cO\colon \cV_G \to \coprod_{E\in \cE_p(G)}\cV_{E}$ which satisfies the condition ($\clubsuit$). Indeed, if $\fp \in \cV_G$, the strong form of Quillen stratification of \cref{thm:kmhfiso}(2) implies that there exists a unique elementary abelian subgroup $E \subseteq G$ in which $\fp$ originates and that, furthermore, the prime ideal $\cO(\fp) \in \cV_E^+ \subseteq \cV_E$ is unique up to the action of the Weyl group. Both the inclusions among elementary abelian subgroups and the Weyl group action give morphisms of commutative ring spectra between the corresponding cochain algebras under $C^*(BG)$, so the claim follows.
\end{proof}

Combining the previous results we obtain the following. 
\begin{thm}\label{thm:stratk1x}
  Let $\mathcal X$ denote the class of compact Lie groups. Then, for any $G \in \mathcal K_1 \mathcal X$, the category $\Mod_{C^*(BG)}$ is stratified by $H^*(BG)$. 
\end{thm}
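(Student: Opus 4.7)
The plan is to apply the descent theorem \Cref{thm:stratification} to the canonical morphism
\[
q_G\colon C^*(BG) \longrightarrow \prod_{E \in \mathcal{E}_p(G)} C^*(BE)
\]
constructed in \Cref{ssec:hierarchy}. This reduces the problem to verifying three ingredients: (a) the target is a Noetherian commutative ring spectrum whose module category is stratified; (b) $q_G$ is biconservative; and (c) $q_G$ satisfies Quillen lifting.

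To handle (a), I would first note that $\mathcal{E}_p(G)$ is finite by the Broto--Kitchloo theory underlying \Cref{thm:kmhfiso}; indeed, the Noetherianity of $\mathcal{V}_G$ together with its decomposition as a disjoint union of non-empty pieces $\mathcal{V}_{G,E}^+$ forces this. With finiteness in hand, ingredient (a) follows by combining \Cref{thm:stratcompactLie} (each $\Mod_{C^*(BE)}$ is stratified) with the finite-product stratification \Cref{rem:products}.

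For ingredient (b), I would invoke \Cref{prop:indcoinductionk1x} directly; its proof reduces biconservativity of $q_G$ to the compact Lie case of \Cref{ex:ChouinardcompactLie} by factoring through the isotropy subgroups afforded by the very definition of $\mathcal{K}_1\mathcal{X}$. For ingredient (c), \Cref{cor:simplequillenliftingk1x} verifies Quillen lifting at the level of the induced map on cohomology rings. Since Quillen lifting is phrased purely in terms of the restriction $\res_{q_G}$ on homogeneous spectra, which depends only on the induced morphism on homotopy rings, this transfers to $q_G$ as a morphism of commutative ring spectra, and a single appeal to \Cref{thm:stratification} then delivers the desired stratification.

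The real obstacle sitting behind this strategy is the Broto--Kitchloo $F$-isomorphism theorem recorded in \Cref{thm:kmhfiso}: both the biconservativity (\Cref{prop:indcoinductionk1x}) and the Quillen lifting (\Cref{cor:simplequillenliftingk1x}) ultimately rest on it. Once this deep input from unstable homotopy theory is available, the descent argument itself is essentially formal and mirrors the treatment of compact Lie groups in \cite{bg_stratifyingcompactlie, bchv1}.
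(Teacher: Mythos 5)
Your proposal is correct and follows essentially the same route as the paper: apply \Cref{thm:stratification} to $q_G$, using \Cref{prop:indcoinductionk1x} for biconservativity, \Cref{cor:simplequillenliftingk1x} for Quillen lifting, and \Cref{rem:products} together with \Cref{thm:stratcompactLie} for stratification of the finite product target. The only cosmetic difference is that the paper cites Broto--Kitchloo directly for the finiteness of $\mathcal{E}_p(G)$ rather than attempting to deduce it from the decomposition of $\mathcal{V}_G$.
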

\begin{proof}
We apply \Cref{thm:stratification} to the morphism   
\[
\xymatrix{q_G \colon C^*(BG) \ar[r] & \prod_{\cE_p(G)} C^*(BE).}
\] 
Note that $\cE_p(G)$ is finite, see Theorem 2.1 of \cite{broto_kitchloo} and the discussion at the bottom of page 630. By \Cref{cor:simplequillenliftingk1x,prop:indcoinductionk1x} stratification of $\Mod_{C^*(BG)}$ will follow if $\prod_{\mathcal E_p(G)} \Mod_{C^*(BE)}$ is stratified, but this follows from \Cref{rem:products,thm:stratcompactLie}.
\end{proof}

By \Cref{thm:tel_conj}, we deduce that the telescope conjecture holds for $\Mod_{C^*(BG)}$ with $G \in \mathcal K_1 \mathcal X$.

\begin{ex}[Stratification for Kac--Moody groups]\label{ssec:kacmoody}
Unitary forms of Kac--Moody groups can be understood as an extension of the class of compact Lie groups with whom they share many properties. For example, they have a maximal torus of finite rank and a notion of Weyl group which is generated by reflections, see \cite{ku}. The Weyl group of a Kac--Moody group is a Coxeter group which is, in general, infinite. 

The study of the topology of Kac--Moody groups and their classifying spaces was initiated by Kitchloo in his thesis and followed by many authors, see for example \cite{nitu_thesis,broto_kitchloo,MR1994344,kitchloo_kacmoody}. In fact, the interest in the groups $G \in \cal{K}_1\cX$ studied in the previous section arose because Kac--Moody groups give examples of groups in $\cal{K}_1\cX$, see \cite[Section 5]{broto_kitchloo}.
\begin{thm}[Broto--Kitchloo]
  Let $\mathcal X$ denote the class of compact Lie groups, then any Kac--Moody group $K$ is in $\cK_1 \cX$. Moreover, the Weyl group of $K$ belongs to the class $\cK_1 \cX$. 
\end{thm}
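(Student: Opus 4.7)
The statement attributes the result to Broto--Kitchloo, so the plan is essentially to recall the geometric input from their work \cite{broto_kitchloo} (building on Kitchloo's earlier analysis of the topology of Kac--Moody groups) and check the two defining conditions of $\cK_1\cX$ using it.

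For the first claim, let $K$ be the unitary form of a Kac--Moody group. The candidate $K$-space $X$ is the topological realization of the Tits building of $K$, or equivalently the geometric realization of the poset of proper parabolic subgroups of finite type. This is a finite-dimensional $K$-CW complex. The isotropy subgroups of the cells are precisely the intersections of the maximal compact forms of the spherical parabolic subgroups of $K$; since each spherical parabolic subgroup has a compact Lie group as its unitary form, condition (1) in the definition of $\cK_1\cX$ holds. For condition (2), one must verify that for every finite $p$-subgroup $\pi \leq K$ the fixed point space $X^\pi$ is mod $p$ acyclic. The key input is that $X$, or a suitable thickening of it, admits a complete CAT(0) metric on which $K$ acts by isometries (this is the Davis--Moussong realization of the building). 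By the Bruhat--Tits fixed point theorem, $X^\pi$ is then non-empty and convex, hence contractible; Smith theory with $\mathbb{F}_p$ coefficients upgrades this to the $p$-acyclicity we need. The main obstacle is the CAT(0)/Bruhat--Tits input, and this is precisely where one cites Kitchloo's results making the building behave as in the compact Lie case.

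For the second claim, let $W$ be the Weyl group of $K$. Then $W$ is a Coxeter group, possibly infinite, and its Davis complex $\Sigma_W$ is a finite-dimensional CAT(0) $W$-CW complex with proper cocompact $W$-action. The cell stabilizers are finite special parabolic subgroups of $W$; since these are finite Coxeter groups, they are in particular (finite, hence) compact Lie groups, giving condition (1). Any finite $p$-subgroup $\pi \leq W$ is contained in a finite subgroup, and by the Bruhat--Tits fixed point theorem applied to the CAT(0) space $\Sigma_W$, the fixed point set $\Sigma_W^\pi$ is non-empty and geodesically convex, hence contractible, giving condition (2) by Smith theory.

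In both cases the structure of the argument is identical: exhibit the $G$-CW complex (Tits building for $K$, Davis complex for $W$), identify the isotropy as members of $\cX$, and use CAT(0) geometry plus Smith theory to control fixed points. The only genuinely non-formal step is the CAT(0) realization of the building of a Kac--Moody group, which is the content of the Broto--Kitchloo analysis and is the part one would simply quote rather than reprove in the paper.
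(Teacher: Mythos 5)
Your reconstruction is sound, but note that the paper itself offers no proof of this statement: it is quoted verbatim from Broto--Kitchloo, with a pointer to Section~5 of \cite{broto_kitchloo}, so there is no internal argument to compare against. What you have written is essentially the standard (and, as far as the cited source goes, the actual) argument: for $K$ one takes the topological Tits building in its Davis realization, whose cell stabilizers are the finite-type parabolic subgroups and their intersections (compact Lie groups), and for $W$ one takes the Davis--Moussong complex with finite special parabolic stabilizers; in both cases the complete CAT(0) metric and the Bruhat--Tits fixed point theorem give that fixed point sets of finite subgroups are non-empty and convex, hence contractible. Two small points. First, your invocation of Smith theory is backwards and redundant: contractibility of $X^\pi$ already implies mod $p$ acyclicity, so no Smith-theoretic input is needed once Bruhat--Tits has been applied (Smith theory would be the alternative route, deducing $p$-acyclicity of $X^\pi$ for $p$-subgroups directly from contractibility and finite-dimensionality of $X$ without any metric geometry). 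Second, the definition of $\cK_1\cX$ used in the paper requires a \emph{finite} $G$-CW complex, not merely a finite-dimensional one; this is satisfied here because the building has finitely many $K$-orbits of cells (indexed by the spherical subsets of the generating set), and likewise the Davis complex is cocompact, but you should say so explicitly rather than only recording finite-dimensionality in the Kac--Moody case.
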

When combined with \Cref{thm:stratk1x}, we deduce the following. 
\begin{thm}\label{thm:kacmoodystrat}
  Let $K$ be a Kac--Moody group and $W$ the Weyl group of $K$. Then $\Mod_{C^*(BK)}$ and $\Mod_{C^*(BW)}$ are stratified.
\end{thm}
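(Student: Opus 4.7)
The plan is to derive both stratification statements as immediate consequences of \Cref{thm:stratk1x}, which establishes stratification of $\Mod_{C^*(BG)}$ for every group $G$ in the Broto--Kitchloo hierarchy $\cK_1\cX$ with $\cX$ the class of compact Lie groups. The Broto--Kitchloo theorem recalled just before the statement is precisely the assertion that any Kac--Moody group $K$ and its Weyl group $W$ both lie in $\cK_1\cX$, so the hypothesis of \Cref{thm:stratk1x} is verified and no further argument is required.

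Unpacking \Cref{thm:stratk1x} briefly: for each of $G = K$ and $G = W$ one works with the Quillen map $q_G\colon C^*(BG) \to \prod_{E \in \cE_p(G)} C^*(BE)$, indexed on a (finite) set of conjugacy class representatives of elementary abelian $p$-subgroups. Biconservativity of $q_G$ is supplied by \Cref{prop:indcoinductionk1x}, which exploits the finite $G$-CW complex with mod $p$-acyclic fixed points and compact Lie stabilizers furnished by the definition of $\cK_1\cX$ to reduce conservativity to the compact Lie case treated in \Cref{ex:ChouinardcompactLie}. Quillen lifting for $q_G$ is supplied by \Cref{cor:simplequillenliftingk1x}, which in turn is a formal consequence of the Broto--Kitchloo $F$-isomorphism and the associated Rector decomposition of $\cV_G$ recorded in \Cref{thm:kmhfiso}. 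Finally, the target $\prod_{\cE_p(G)} \Mod_{C^*(BE)}$ is stratified by combining the Benson--Iyengar--Krause stratification theorem for elementary abelian $p$-groups (\Cref{thm:stratcompactLie}) with the stability of stratification under finite products (\Cref{rem:products}).

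With biconservativity, Quillen lifting, and stratification of the target in hand, the main descent theorem \Cref{thm:stratification} yields stratification of $\Mod_{C^*(BG)}$ for each of $G = K$ and $G = W$. All genuine content has already been accumulated in the previous subsections; there is no real obstacle left at this point. If anything, the only delicate input being cited is the Broto--Kitchloo verification that Kac--Moody groups and their (possibly infinite Coxeter) Weyl groups satisfy the hierarchical condition, since it is not a priori clear that one can build a finite $G$-CW complex with the required fixed-point properties for infinite-dimensional topological groups such as these.
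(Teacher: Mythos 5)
Your proposal is correct and follows exactly the paper's argument: Theorem~\ref{thm:kacmoodystrat} is deduced by combining the Broto--Kitchloo result that $K$ and $W$ lie in $\cK_1\cX$ with \Cref{thm:stratk1x}, whose proof you have accurately unpacked (biconservativity from \Cref{prop:indcoinductionk1x}, Quillen lifting from \Cref{cor:simplequillenliftingk1x}, stratification of the target from \Cref{thm:stratcompactLie} and \Cref{rem:products}, and descent via \Cref{thm:stratification}). No gaps.
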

\end{ex}

\section{Hopf spaces with Noetherian mod $p$ cohomology}\label{sec:noetherian}

The structure of $H$-spaces with Noetherian mod $p$ cohomology is quite well understood. Using Lannes' $T$-functor and Bousfield nullification functors, those spaces fit into a fibration where the base and the fibre are finite and Eilenberg--MacLane $H$-spaces, respectively, see \cite{bcs_deconstructing} and \cite{ccs_deconstructing}.  In this section, we use this decomposition and the work in the previous sections to show that for a connected $H$-space $X$ with Noetherian mod $p$ cohomology ring, the category of modules over $C^*(X)$ is stratified.

\subsection{Unstable algebras over the Steenrod algebra}\label{ssec:lannes}

The mod $p$ cohomology of any space $X$ is equipped with an action of the mod $p$ Steenrod algebra $\mathcal{A}_p$. In fact, it is a graded $\mathcal{A}_p$-module, and the action of $\mathcal{A}_p$ satisfies additional properties, such that $H^*(X;\F_p)$ is an object in the abelian category of unstable modules over the Steenrod algebra $\mathcal{U}$, see \cite{Schwartz_book} for example. We let 
$\mathcal{K}$ denote the category of unstable algebras over the Steenrod algebra; that is, an object $K \in \mathcal{K}$ is an unstable module together with maps $\rho \colon K \otimes K \to K,\, \eta \colon \F_p \to K$ satisfying some additional axioms, again see \cite{Schwartz_book}. The diagonal of $X$ equips $H^*(X;\F_p)$ with a multiplication, which satisfies certain compatibility relations with the Steenrod algebra, so that the mod $p$ cohomology of any space is in fact an object of $\cal{K}$.  

We recall that a morphism $f \colon R \to S$ of $
\F_p$-algebras is said to be an $F$-monomorphism if every element in $\ker(f)$ is nilpotent, and an $F$-epimorphism if for every $s \in S$, there exists a natural number $n$ such that $s^{p^n} \in \im(f)$. We say that $f$ is an $F$-isomorphism if it is both an $F$-monomorphism and an $F$-epimorphism. If the number $n$ can be chosen independently of $s$, then we say that it is a uniform $F$-isomorphism. Lannes' machinery gives a criterion for when a morphism of unstable algebras is an $F$-monomorphism or $F$-epimorphism. 

\begin{thm}\cite[Corollary II.1.4]{hls_categoryU}\label{thm:fepimono}
  Let $\phi \colon K \to K'$ be a morphism of unstable algebras. Then the morphism of $\F_p$-algebras underlying $\phi$ is an $F$-monomorphism (resp. $F$-epimorphism) if and only if 
\[
\xymatrix{\Hom_{\cal{K}}(K',H^*(BV)) \ar[r] & \Hom_{\cal{K}}(K,H^*(BV))}
\]
is surjective (resp. injective) for all elementary abelian $p$-groups $V$. 
\end{thm}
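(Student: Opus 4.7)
The plan is to apply Lannes' $T$-functor machinery. For each elementary abelian $p$-group $V$, the functor $T_V \colon \cK \to \cK$ is left adjoint to $(-) \otimes H^*(BV)$, and Lannes proved that $T_V$ is exact on the underlying category $\cal{U}$ of unstable modules. Adjunction yields natural bijections
\[
\Hom_{\cK}(K, H^*(BV)) \;\cong\; \Hom_{\cK}(T_V K, \F_p) \;\cong\; \Hom_{\F_p\text{-alg}}((T_V K)^{0}, \F_p),
\]
where $(-)^{0}$ denotes the degree-zero part. Under this identification, precomposition by $\phi$ corresponds to precomposition by the $\F_p$-algebra map $(T_V \phi)^{0} \colon (T_V K)^{0} \to (T_V K')^{0}$, so the theorem translates into a statement about $\F_p$-rational points of this morphism of commutative $\F_p$-algebras as $V$ varies.

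The ``forward'' implications are essentially formal. If $\phi$ is an $F$-epimorphism and $f, g \colon K' \to H^*(BV)$ satisfy $f \phi = g \phi$, then $f$ and $g$ agree on the $\F_p$-subalgebra generated by $p$-power closures of $\phi(K)$, which exhausts $K'$, so $f = g$ and the map on Hom sets is injective. Dually, if $\phi$ is an $F$-monomorphism, any unstable algebra map $K \to H^*(BV)$ factors through $K / \sqrt{0}$; since $\phi$ induces an inclusion $K / \sqrt{0} \hookrightarrow K' / \sqrt{0}$ and the target is a polynomial algebra over $\F_p$ (up to an exterior piece that is invisible to algebra maps mod nilpotents), this factorisation extends to $K' / \sqrt{0}$ and hence lifts to an unstable algebra map $K' \to H^*(BV)$, giving surjectivity on Hom sets.

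The harder direction is the converse. The strategy is to verify that $T_V$ sends $F$-monomorphisms (resp. $F$-epimorphisms) to $F$-monomorphisms (resp. $F$-epimorphisms), using exactness of $T_V$ on $\cal{U}$ together with its compatibility with the algebra structure inherited from $\cK$. The hypothesis on Hom sets then translates, via the bijections above, into a statement about injectivity or surjectivity of $\F_p$-rational points of $(T_V \phi)^{0}$; a Nullstellensatz-type argument over $\F_p$-algebras of finite type recovers the $F$-iso properties of $(T_V \phi)^{0}$ itself, and then one unwinds these across all $V$ to conclude the property for $\phi$. The main obstacle is guaranteeing that varying $V$ yields enough information to detect $\phi$ modulo nilpotents: this is a genuinely nontrivial input relying on Lannes' theorem and the principle that the collection $\{H^*(BV)\}_V$ forms a system of ``cogenerators'' for the reduced part of the category of (Noetherian) unstable algebras.
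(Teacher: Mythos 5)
First, a point of comparison: the paper does not prove this statement at all --- it is quoted verbatim from Henn--Lannes--Schwartz \cite[Corollary II.1.4]{hls_categoryU} and used as a black box. So the relevant question is only whether your argument stands on its own, and it does not: both the ``formal'' direction and the converse have genuine gaps.

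The forward implications are not formal, and the specific arguments you give fail at odd primes. For ``$F$-epi $\Rightarrow$ injective'': from $f\phi = g\phi$ and $x^{p^n}\in\im\phi$ you only get $f(x)^{p^n}=g(x)^{p^n}$, i.e.\ $(f(x)-g(x))^{p^n}=0$; since $H^*(BV;\F_p)=\Lambda(x_1,\dots,x_n)\otimes\F_p[y_1,\dots,y_n]$ has a nonzero nilradical for $p$ odd, the Frobenius is not injective there and you cannot conclude $f(x)=g(x)$. For ``$F$-mono $\Rightarrow$ surjective'': the claim that every map in $\cal{K}$ from $K$ to $H^*(BV)$ factors through $K/\sqrt{0}$ is not a ring-theoretic triviality (the target is not reduced as a ring); it holds because the nilradical of an unstable algebra is a \emph{nilpotent unstable module} and $H^*(BV)$ is a \emph{reduced} injective object of $\cal{U}$, which is exactly the kind of input you are trying to avoid. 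Worse, the subsequent step ``this factorisation extends to $K'/\sqrt{0}$ and hence lifts to an unstable algebra map $K'\to H^*(BV)$'' is asserted without proof, and it is precisely the hard content of the theorem: extending along a monomorphism-mod-nilpotents uses the injectivity of $H^*(BV)$ in $\cal{U}$ (Carlsson, Miller, Lannes--Zarati), and lifting the resulting $\cal{U}$-map to a map of unstable \emph{algebras} requires the multiplicative analysis of the quotient category modulo nilpotents carried out in \cite{hls_categoryU}.

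For the converse you give only a strategy, and one of its steps is false as stated: a ``Nullstellensatz-type argument'' cannot recover an $F$-isomorphism of finite-type $\F_p$-algebras from its $\F_p$-rational points, because $\F_p$ is not algebraically closed (e.g.\ $\F_p\to\F_{p^2}$ is detected by no $\F_p$-points). What saves the day in Lannes' theory is the structure theorem that the semisimple quotient of $T_V^0K$ is a finite product of copies of $\F_p$ --- a consequence of the unstable algebra structure, not of commutative algebra --- and your closing ``principle'' that the $H^*(BV)$ cogenerate the reduced part of $\cal{K}$ is essentially a restatement of the theorem to be proved. If you want a genuine proof, you must either reproduce the Henn--Lannes--Schwartz analysis of $\cal{U}/\mathcal{N}il$ and its multiplicative refinement, or simply cite it as the paper does.
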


Given a topological space $X$, taking cohomology induces a morphism
\[
\xymatrix{[BV,X] \ar[r] &  \Hom_{\cal{K}}(H^*(X),H^*(BV))}
\] 
and Lannes' theory \cite{La}  establishes conditions under which it is a bijection. Recall that we write $X^\wedge_p$ for the Bousfield--Kan $p$-completion of a space $X$. 

\begin{thm}[Lannes]\label{thm:lannespi0comp}
Let $X$ be a topological space such that $H^*(X;\F_p)$ is of finite type, 
then taking mod $p$ cohomology induces a bijection
\[
\xymatrix{[BV,X^\wedge_p] \ar[r]^-{\simeq} & \Hom_{\mathcal K}(H^*(X), H^*(BV)).}
\]
Moreover, if $\pi_1X$ is a finite $p$-group, then $[BV,X]\cong [BV,X^\wedge_p]$. \end{thm}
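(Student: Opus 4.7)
The plan is to deduce both statements from Lannes' theory, treating Lannes' mapping-space theorem as a black box.

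For the first statement, I would use Lannes' $T$-functor $T_V \colon \cal K \to \cal K$, defined as the left adjoint to $K \mapsto K \otimes H^*(BV)$; its key property is exactness on the underlying unstable modules. Lannes' mapping-space theorem asserts that for $X$ with $H^*(X)$ of finite type there is, for each component $f \in \pi_0 \map(BV, X^\wedge_p)$, a natural isomorphism of unstable algebras
\[
H^*(\map(BV, X^\wedge_p)_f) \cong \F_p \otimes_{\phi_f} T_V H^*(X),
\]
where $\phi_f \colon T_V H^*(X) \to \F_p$ is the unital algebra map corresponding to $f$ via adjunction. I would take this theorem as given, as its proof is a classical deep result obtained by reduction to Eilenberg--MacLane spaces using exactness of $T_V$ together with a Postnikov-tower argument compatible with $p$-completion.

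Granting the mapping-space theorem, the first claim is essentially formal. The natural map
\[
[BV, X^\wedge_p] = \pi_0 \map(BV, X^\wedge_p) \longrightarrow \Hom_{\cal K}(H^*(X), H^*(BV)),
\]
sending $f$ to the induced algebra map $f^*$, is the one we must show is a bijection. By the adjunction $T_V \dashv (-\otimes H^*(BV))$, the target is naturally identified with the set of unital algebra maps $T_V H^*(X) \to \F_p$, and the mapping-space theorem produces an identification of this set with $\pi_0 \map(BV, X^\wedge_p)$ by assigning to each component $f$ the map $\phi_f$ above. Routine diagram chasing confirms the composite agrees with $f \mapsto f^*$.

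For the second statement, I would appeal to standard convergence properties of the Bousfield--Kan tower \cite{bousfield_kan}. When $\pi_1 X$ is a finite $p$-group, $X$ is $p$-good, so $H^*(X) \cong H^*(X^\wedge_p)$; moreover, under this hypothesis standard arguments show that $X \to X^\wedge_p$ induces a componentwise weak equivalence $\map(BV, X) \to \map(BV, X^\wedge_p)$, yielding the claimed bijection on $\pi_0$. The main obstacle of any complete proof is the mapping-space theorem itself, which is nontrivial and historically constitutes the heart of Lannes' resolution of the Sullivan conjecture; the remainder is formal bookkeeping.
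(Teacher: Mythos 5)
Your treatment of the first assertion is fine and matches the paper's, which simply cites \cite[Th\'eor\`eme 3.1.1]{La}; note only that the $\pi_0$-statement is proved by Lannes directly and under (a priori weaker) hypotheses than the full mapping-space theorem $H^*(\map(BV,X^\wedge_p)_f)\cong \F_p\otimes_{T_V^0}T_VH^*(X)$, which requires finiteness conditions on $T_VH^*(X)$ as well; so deriving the former from the latter is slightly backwards, though harmless here.

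The second assertion is where you have a genuine gap. You claim that when $\pi_1X$ is a finite $p$-group, ``standard convergence properties of the Bousfield--Kan tower'' show that $X\to X^\wedge_p$ induces a \emph{componentwise weak equivalence} $\map(BV,X)\to\map(BV,X^\wedge_p)$. This statement is false as written: take $X=S^3$ (so $\pi_1X$ is trivially a finite $p$-group). By Miller's theorem the component of the constant map in $\map(BV,S^3)$ is weakly equivalent to $S^3$, while by Lannes/Miller the corresponding component of $\map(BV,(S^3)^\wedge_p)$ is weakly equivalent to $(S^3)^\wedge_p$, and $S^3\to (S^3)^\wedge_p$ is not a weak equivalence. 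Moreover, even the correct, weaker statement --- the bijection on $\pi_0$, i.e.\ $[BV,X]\cong[BV,X^\wedge_p]$ --- is not a formal consequence of Bousfield--Kan convergence. What is actually needed (and what the paper invokes) is the obstruction-theoretic argument from the proof of \cite[Theorem 3.1]{dwyerzabrodsky_classifyingspaces}: since $\widetilde H^*(BV;M)$ vanishes for $M$ rational or $\ell$-adic with $\ell\neq p$, the arithmetic fracture square for a space with finite fundamental group shows that maps out of $BV$ see only the $p$-complete part, yielding the bijection on homotopy classes (but nothing stronger). You should replace your claim with this argument or an explicit citation of it; as it stands, the step both overstates the conclusion and misattributes its source.
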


\begin{proof}
The first part of the statement is \cite[Theorem 3.1.1]{La}. For the second, by the obstruction theoretic argument of \cite[proof of Theorem 3.1]{dwyerzabrodsky_classifyingspaces}, the natural map $X\to X^\wedge_p$ induces an isomorphim $[BV,X]\cong[BV,X^\wedge_p]$. 
\end{proof}

One can combine \Cref{thm:fepimono,thm:lannespi0comp} into the following.

\begin{prop}\label{prop:simplequillenspaces}
Let $X$ and $Y$  be  topological spaces such that $H^*(X)$ and  $H^*(Y)$ are of finite type.  Let $f \colon X \to Y$ be a map such that \[
\xymatrix{[BV,X^{\wedge}_p] \ar[r] & [BV,Y^{\wedge}_p]}
\]
is injective (resp. surjective) for all elementary abelian $p$-groups $V$, then $f^*\colon H^*(Y)\rightarrow H^*(X)$ is an $F$-epimorphism (resp. $F$-monomorphism).
\end{prop}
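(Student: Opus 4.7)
The plan is a direct combination of Theorem 4.3 (Lannes) and Theorem 4.2 (HLS Corollary II.1.4). The proof essentially consists of transporting an algebraic criterion to a topological one via Lannes' bijection.

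First, since $H^*(X)$ and $H^*(Y)$ are of finite type, Theorem 4.3 provides natural bijections
\[
[BV,X^{\wedge}_p] \xrightarrow{\cong} \Hom_{\cal{K}}(H^*(X), H^*(BV)), \qquad [BV,Y^{\wedge}_p] \xrightarrow{\cong} \Hom_{\cal{K}}(H^*(Y), H^*(BV)),
\]
for every elementary abelian $p$-group $V$, given by sending a map to the induced morphism on mod $p$ cohomology. Naturality of these bijections in the space variable implies that the topological map $[BV,X^{\wedge}_p] \to [BV,Y^{\wedge}_p]$ (postcomposition with $f^{\wedge}_p$) corresponds to the algebraic precomposition map $\alpha \mapsto \alpha \circ f^*$ on $\Hom_{\cal{K}}(-, H^*(BV))$; this is simply the identity $(f \circ g)^* = g^* \circ f^*$ applied representative-by-representative.

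Second, apply Theorem 4.2 with $\phi = f^* \colon H^*(Y) \to H^*(X)$, so that $K = H^*(Y)$ and $K' = H^*(X)$. The theorem asserts that $\phi$ is an $F$-monomorphism (respectively $F$-epimorphism) if and only if, for every elementary abelian $V$, the precomposition map
\[
\Hom_{\cal{K}}(H^*(X), H^*(BV)) \longrightarrow \Hom_{\cal{K}}(H^*(Y), H^*(BV))
\]
is surjective (respectively injective). Combining this iff-statement with the Lannes identification from the first step immediately translates the algebraic hypothesis into the desired hypothesis on the topological maps $[BV,X^{\wedge}_p] \to [BV,Y^{\wedge}_p]$, and restricting the biconditionals to single implications gives the statement of the proposition.

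No genuine obstacle is expected, as both inputs are cited as black boxes. The single point that requires care is bookkeeping the direction of the induced arrow: because $\phi = f^*$ goes from $H^*(Y)$ to $H^*(X)$, the source of the precomposition map in the HLS criterion is $\Hom_{\cal{K}}(H^*(X), -)$, which corresponds to $[BV, X^{\wedge}_p]$; the author should verify that this matching of directions agrees with the injective/surjective assignment in the proposition as stated.
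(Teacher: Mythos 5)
Your approach is exactly the intended one: the paper offers no written proof beyond the remark that \Cref{thm:fepimono} and \Cref{thm:lannespi0comp} combine to give the statement, and your two steps (naturality of the Lannes bijection identifying postcomposition with $f^\wedge_p$ with precomposition with $f^*$, then the HLS criterion applied to $\phi = f^*$) are precisely that combination. The one point you flagged but left open --- matching the injective/surjective hypotheses to the $F$-mono/$F$-epi conclusions --- is worth carrying out, because it reveals that the pairing in the proposition as printed is transposed. With $\phi = f^*\colon H^*(Y)\to H^*(X)$, so $K = H^*(Y)$ and $K' = H^*(X)$, the precomposition map in \Cref{thm:fepimono} is $\Hom_{\cal{K}}(H^*(X),H^*(BV)) \to \Hom_{\cal{K}}(H^*(Y),H^*(BV))$, which under Lannes is $[BV,X^\wedge_p]\to[BV,Y^\wedge_p]$; the criterion then says $f^*$ is an $F$-monomorphism iff this map is \emph{surjective}, and an $F$-epimorphism iff it is \emph{injective}. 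Hence the correct conclusions are: surjectivity of $[BV,X^\wedge_p]\to[BV,Y^\wedge_p]$ gives an $F$-monomorphism, and injectivity gives an $F$-epimorphism --- the transpose of the stated ``(resp.)'' assignment. This is consistent with how the result is actually used in \Cref{lem:fisomorphism}, where surjectivity of $\phi_V\colon [BV,F^\wedge_p]\to[BV,E^\wedge_p]$ is used to conclude that $H^*(E)\to H^*(F)$ is an $F$-monomorphism. So your proof is right; finish the bookkeeping and state the conclusion with the corrected pairing.
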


Finally, we state a theorem proven by Miller~\cite[Theorem 1.5]{miller_sullivan} that allows us to remove $p$-completions when mapping from classifying spaces of elementary abelian $p$-groups.

\begin{thm}[Miller]\label{thm:miller}
Let $X$ be a connected nilpotent space, then the canonical map $X \to X^\wedge_p$ induces an isomorphism
\[
\xymatrix{\map_*(BV, X) \ar[r]^-{\simeq} & \map_*(BV, X^\wedge_p).}
\]
\end{thm}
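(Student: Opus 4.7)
The plan is to reduce to Eilenberg--MacLane spaces via the Postnikov tower and then exploit the fact that $\tilde{H}^{\ast}(BV;-)$ lands in $p$-power torsion groups, so that $p$-completion becomes harmless.

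First, since $X$ is connected nilpotent, I would invoke the existence of a principal refinement of its Postnikov tower: $X \simeq \lim_n X_n$ with $X_0 = \ast$ and each $X_{n+1}\to X_n$ a principal fibration with fiber $K(A_n,m_n)$ for an abelian group $A_n$. Bousfield--Kan $p$-completion respects principal fibrations of nilpotent spaces, so $X^{\wedge}_p$ is obtained by completing each stage. Both $\map_{\ast}(BV,-)$ and the natural transformation $\map_{\ast}(BV,-)\to \map_{\ast}(BV,(-)^{\wedge}_p)$ preserve homotopy fiber sequences. Thus, by induction on the Postnikov stages, I can reduce the claim to the case $X = K(A,n)$ for an abelian group $A$ and $n\geq 1$.

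Next, for $X = K(A,n)$, I would compute both sides. The homotopy groups of $\map_{\ast}(BV,K(A,n))$ are $\tilde{H}^{n-i}(BV;A)$, so it suffices to show that the natural map
\[
\tilde{H}^{\ast}(BV;A) \longrightarrow \pi_{n-\ast}\map_{\ast}(BV,K(A,n)^{\wedge}_p)
\]
is an isomorphism. The essential input is that $\tilde{H}^j(BV;\mathbb{Z})$ is a finite abelian $p$-group in each positive degree $j$, hence $\tilde{H}^j(BV;A)$ is $p$-power torsion for every abelian group $A$ and every $j > 0$. In particular these cohomology groups are already $\textrm{Ext}$-$p$-complete, so comparing them to the homotopy groups of the target via Bousfield--Kan's description of the $p$-completion of an Eilenberg--MacLane space yields the required isomorphism.

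The principal obstacle is handling $K(A,n)^{\wedge}_p$ when $A$ is not finitely generated: in that generality $K(A,n)^{\wedge}_p$ is \emph{not} simply $K(A^{\wedge}_p,n)$ but is governed by the derived $p$-completion of $A$, which can introduce homotopy one degree down (for instance when $A = \mathbb{Z}/p^{\infty}$). Controlling this correction is the technically delicate point, and it is here that one invokes Miller's vanishing result that $\map_{\ast}(BV,F)\simeq \ast$ for $F$ a finite-dimensional complex with finite homotopy groups of order prime to $p$, together with a careful decomposition of $A$ into its $p$-primary, prime-to-$p$, and torsion-free parts. Once those pieces are treated separately, the Eilenberg--MacLane case is settled and the Postnikov induction concludes the proof.
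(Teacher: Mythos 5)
The paper does not actually prove this statement: it is quoted verbatim from Miller's paper on the Sullivan conjecture (cited as \cite[Theorem 1.5]{miller_sullivan}) and used as a black box. So there is no internal proof to compare against; what I can do is assess your sketch on its own terms. Your overall strategy --- reduce along a principal refinement of the Postnikov tower to the Eilenberg--Mac\,Lane case, then exploit that $\tilde{H}^{j}(BV;\Z)$ is a finite $p$-group for $j>0$ so that (derived) $p$-completion of the coefficients is invisible to $\Hom$ and $\Ext$ out of $\tilde{H}_*(BV)$ --- is the standard route to this comparison statement, and it is sound in outline. In particular, you are right that one only needs a \emph{levelwise} equivalence of towers, so none of the hard vanishing results behind the Sullivan conjecture are required here.

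That said, three points in your sketch need repair. First, the correction term in $K(A,n)^{\wedge}_p$ goes the wrong way in your write-up: Bousfield--Kan give a short exact sequence $0\to\Ext(\Z/p^{\infty},\pi_nX)\to\pi_n(X^{\wedge}_p)\to\Hom(\Z/p^{\infty},\pi_{n-1}X)\to 0$, so $\Hom(\Z/p^{\infty},A)$ contributes to $\pi_{n+1}$ of $K(A,n)^{\wedge}_p$, i.e.\ one degree \emph{up}, not down (e.g.\ $K(\Z/p^{\infty},n)^{\wedge}_p\simeq K(\Z_p,n+1)$); with the wrong shift the comparison of homotopy groups via the Bockstein for $0\to\Z_p\to\Q_p\to\Z/p^{\infty}\to 0$ does not line up. Second, a general abelian group does not decompose into $p$-primary, prime-to-$p$, and torsion-free summands, so that step cannot be carried out as stated; the clean argument is instead to note that $\map_*(BV,K(A,n))$ has homotopy $\Hom(\tilde{H}_j(BV),A)\oplus\Ext(\tilde{H}_{j-1}(BV),A)$ with $\tilde{H}_*(BV)$ a finite $p$-group in each degree, and that $\RHom(M,A)\to\RHom(M,A^{\wedge}_p)$ is an equivalence for $M$ finite $p$-torsion, where $A^{\wedge}_p$ denotes the derived $p$-completion; the appeal to Miller's vanishing theorem for finite-dimensional complexes is neither needed nor obviously applicable here. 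Third, the identification $X^{\wedge}_p\simeq\lim_n(X_n)^{\wedge}_p$ for the completed principal refinement is exactly the Bousfield--Kan analysis of completions of nilpotent spaces and should be cited rather than asserted. With these corrections the argument goes through, but as written the Eilenberg--Mac\,Lane step --- which you correctly identify as the crux --- does not.
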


\begin{cor}
Let $X$ be a connected nilpotent space with finite fundamental group, then the canonical map $X \to X^\wedge_p$ induces an equivalence
\[
\xymatrix{[BV, X] \ar[r]^-{\simeq} & [BV, X^\wedge_p].}
\]
\end{cor}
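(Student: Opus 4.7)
The plan is to reduce the unpointed statement to Miller's theorem for the pointed mapping space by analyzing the evaluation fibration. For any pointed connected space $Y$ there is the evaluation fibration
\[
\map_*(BV,Y) \longrightarrow \map(BV,Y) \xrightarrow{\mathrm{ev}} Y,
\]
and since $Y$ is connected the associated long exact sequence of homotopy groups gives an identification
\[
[BV,Y] \;=\; \pi_0\map(BV,Y) \;\cong\; \pi_0\map_*(BV,Y)\,/\,\pi_1(Y),
\]
where $\pi_1(Y)$ acts via the connecting map $\partial\colon \pi_1(Y) \to \pi_0\map_*(BV,Y)$.

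I would apply this first to $Y = X$ and to $Y = X^\wedge_p$. Miller's theorem (\Cref{thm:miller}) gives a weak equivalence $\map_*(BV,X) \xrightarrow{\simeq} \map_*(BV,X^\wedge_p)$, and in particular a bijection on $\pi_0$. Naturality of the evaluation fibration in $Y$ with respect to the map $X \to X^\wedge_p$ then yields a commutative diagram
\[
\xymatrix{\pi_1(X)\ar[r]^-{\partial}\ar[d] & \pi_0\map_*(BV,X)\ar[d]^-{\cong}\ar[r] & [BV,X]\ar[d]\ar[r] & 0 \\
\pi_1(X^\wedge_p)\ar[r]^-{\partial} & \pi_0\map_*(BV,X^\wedge_p)\ar[r] & [BV,X^\wedge_p]\ar[r] & 0,}
\]
whose middle vertical arrow is a bijection and whose top/bottom rows are exact. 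Consequently, the action of $\pi_1(X)$ on $\pi_0\map_*(BV,X)$ factors through $\pi_1(X^\wedge_p)$ under Miller's identification.

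The last ingredient is that $\pi_1(X) \to \pi_1(X^\wedge_p)$ is surjective: since $X$ is nilpotent, $\pi_1(X^\wedge_p) = \pi_1(X)^\wedge_p$, and as $\pi_1(X)$ is a finite nilpotent group this $p$-completion is just the projection onto its $p$-Sylow subgroup, which is surjective. Whenever a group $G$ acts on a set $S$ through a surjection $G \twoheadrightarrow H$, one has $S/G = S/H$, so the orbit sets agree, and the right-hand vertical map in the diagram is a bijection. This is exactly the asserted equivalence $[BV,X] \xrightarrow{\simeq} [BV,X^\wedge_p]$.

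The only non-formal point is verifying that the $\pi_1(X)$-action on $\pi_0\map_*(BV,X)$ really does factor through $\pi_1(X^\wedge_p)$, and this is where I expect the reader will want care: it follows directly from naturality of the connecting map in the evaluation fibration, combined with Miller's bijection on the fibers. Finiteness of $\pi_1(X)$ is used only at the very end, to ensure surjectivity of the $p$-completion map on fundamental groups.
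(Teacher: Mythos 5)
Your proposal is correct and follows essentially the same route as the paper: identify $[BV,Y]$ with the orbit set of the $\pi_1(Y)$-action on pointed homotopy classes (the paper phrases this via the forgetful map $[BV,Y]_*\to[BV,Y]$, you via the evaluation fibration, which is the same thing), apply Miller's theorem to the pointed mapping spaces, and conclude using surjectivity of $\pi_1(X)\to\pi_1(X^\wedge_p)$, which holds since $\pi_1(X)$ is finite. Your extra remark identifying $\pi_1(X^\wedge_p)$ with the $p$-Sylow subgroup of the finite nilpotent group $\pi_1(X)$ is a correct elaboration of the step the paper leaves implicit.
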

\begin{proof}
We recall that if $Y$ is a path-connected space, the forgetful map $[BV,Y]_* \to [BV,Y]$ is surjective and two elements have the same image if and only if they are in the same orbit by the action of $\pi_1(Y)$.  Now consider the following commutative diagram
\[ 
\xymatrix{[BV,X]_* \ar[r] \ar[d]_{\alpha}^-{\cong} & [BV,X]  \ar[r] \ar[d] & \pi_0(X) = \ast \ar[d]^-{\cong} \\
[BV,X^\wedge_p]_* \ar[r] & [BV,X^\wedge_p]  \ar[r] & \pi_0(X^\wedge_p) = \ast,}
\]
in which the map $\alpha$ is $\pi_1(X)$-equivariant. The diagram shows that $[BV,X]\to[BV,X^{\wedge}_p]$ is surjective. To prove injectivity, one needs to prove that given two pointed maps $f,g \colon BV \to X$ such that $f^{\wedge}_p,g^{\wedge}_p\in [BV,X^{\wedge}_p]$ differ by the action of $\pi_1(X^{\wedge}_p)$, then $f$ and $g$ differ by the action of $\pi_1(X)$. This is the case if $\pi_1(X)\to \pi_1(X^{\wedge}_p)$ is surjective, for example, when $\pi_1(X)$ is finite. 
\end{proof}

\subsection{$B\Z/p$-nullification and nilpotent fibrations}\label{ssec:nullification}

We collect some material about $B\Z/p$-nullification and nilpotent fibrations that we will use later in this section. Our main references for the former are \cite{drorfarjoun_localization, bousfield_unstablelocalization}, while we refer to \cite{bousfield_kan} for the latter. 

Let $A$ be a space. Dror-Farjoun \cite{drorfarjoun_localization} and Bousfield \cite{bousfield_unstablelocalization} construct a localization functor $P_A\colon \Top \to \Top$ that universally inverts the map $A \to \ast$, the so-called $A$-nullification. In more detail, a space $X$ is $A$-null if the canonical map $A \to \ast$ induces a weak equivalence
\[
\xymatrix{X \simeq \map(\ast,X) \ar[r]^-{\sim} & \map(A,X).}
\]
If $A$ and $X$ are connected, this is equivalent to a weak equivalence $\ast \simeq \map_*(A,X)$ on pointed mapping spaces. The functor $P_A$ is coaugmented and takes values in $A$-null spaces. Moreover, if $X$ is a space and $Y$ is an $A$-null space, then the coaugmention of $P_A$ induces a weak equivalence 
\[
\xymatrix{\map(P_AX,Y) \ar[r]^-{\sim} & \map(X,Y),} 
\]
i.e., $P_A$ is left adjoint to the inclusion of the category of $A$-null spaces into $\Top$. It follows that $P_A$ preserves finite products and hence $H$-spaces; in fact, if $X$ is an $H$-space, then $X \to P_AX$ is an $H$-map of $H$-spaces. In \cite[Proposition 2.9]{bousfield_unstablelocalization}, Bousfield proves that $P_A$ preserves the property of being simply connected. 

\begin{rem}
It is an open problem (see \cite[Question 9.F.7]{drorfarjoun_localization}) whether $P_A$ also preserves nilpotency.   
\end{rem}

We now specialize to the case $A = B\Z/p$. Miller shows that $\map_{\ast}(B\Z/p,X)$ is weakly contractible for any nilpotent space $X$ with bounded mod $p$ cohomology, see \cite[Theorem C]{miller_sullivan}, so we get:

\begin{thm}\label{thm:millersullivan}
A connected nilpotent space with bounded mod $p$ cohomology is $B\Z/p$-null. 
\end{thm}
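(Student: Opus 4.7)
The plan is to reduce the statement to a direct citation of Miller's resolution of the Sullivan conjecture, following the parenthetical attribution in the excerpt. First, I would observe that for $X$ connected, the property of being $B\Z/p$-null is equivalent to the weak contractibility of the pointed mapping space $\map_*(B\Z/p,X)$. Indeed, since $B\Z/p$ is connected and pointed, evaluation at the basepoint fits in a fibration
\[
\map_*(B\Z/p,X) \longrightarrow \map(B\Z/p,X) \xrightarrow{\mathrm{ev}} X,
\]
and the constant-map coaugmentation provides a section up to homotopy. Hence the coaugmentation $X \to \map(B\Z/p,X)$ is a weak equivalence if and only if the fiber $\map_*(B\Z/p,X)$ is weakly contractible.

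Second, I would invoke Miller's Theorem C of \cite{miller_sullivan} directly: for any connected nilpotent space $X$ whose mod $p$ cohomology is bounded (in particular, for any $\F_p$-finite nilpotent space), the pointed mapping space $\map_*(B\Z/p,X)$ is weakly contractible. Combining this with the reduction in the previous paragraph yields the theorem.

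The only substantive content is contained in the cited result of Miller, which is used here as a black box. Miller's original statement for $\F_p$-finite $X$ is upgraded to the case of bounded mod $p$ cohomology via a Postnikov-style decomposition together with the fact that $B\Z/p$-nullity is preserved under the relevant inverse limits of principal fibrations with Eilenberg--Mac Lane fibers; this is already carried out in \cite{miller_sullivan}, so no further argument is needed on our part. The potential subtlety one should be aware of is the nilpotency hypothesis: without it, the statement fails, since $\map_*(B\Z/p,X)$ need not vanish when the action of $\pi_1 X$ on the higher homotopy groups is uncontrolled. In the applications of this result in the remainder of the section, nilpotency will indeed hold by construction.
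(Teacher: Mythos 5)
Your argument is the same as the paper's: the paper records earlier in the subsection that for connected $A$ and $X$ being $A$-null is equivalent to $\ast \simeq \map_*(A,X)$, and then deduces the theorem directly from Miller's result that $\map_*(B\Z/p,X)$ is weakly contractible for nilpotent $X$ with bounded mod $p$ cohomology. Your extra detail on the evaluation fibration and on the scope of Miller's theorem is fine but not needed beyond the citation.
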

\begin{rem}\label{rem:h_space_null}
	By \Cref{thm:miller}, if $X$ is a connected $H$-space, then there are weak equivalences
\[
\ast \simeq \map_*(B\Z/p,P_{B\Z/p}X) \simeq \map_*(B\Z/p,(P_{B\Z/p}X)^\wedge_p),
\]
so $(P_{B\Z/p}X)^\wedge_p$ is $B\Z/p$-null as well. 
\end{rem}

Recall that a fibration $f\colon E \to B$ of connected spaces is called nilpotent if the fiber of $f$ is connected and the canonical action of $\pi_1E$ on $\pi_iF$ is nilpotent for all $i \ge 1$. In this case, we will also refer to 
\[
\xymatrix{F \ar[r] & E \ar[r] & B}
\]
as a nilpotent fiber sequence. Note that a space $X$ is nilpotent if and only if $X \to \ast$ is a nilpotent fibration. Furthermore, the following two-for-three property holds for nilpotent fibrations: If 
\[
\xymatrix{X_2 \ar[r]^-{f_2} & X_1 \ar[r]^-{f_1} & X_0}
\]
are fibrations with connected fibers, then if two of the three maps $f_1, f_2, f_1f_2$ are nilpotent fibrations, then so is the third, see \cite[Proposition II.4.4]{bousfield_kan}. In particular, if $f\colon E \to B$ is a fibration with $E$ and $B$  both nilpotent, then $f$ is a nilpotent fibration. 

In \cite[Section II.4.8]{bousfield_kan}, it is shown that the $p$-completion of a nilpotent fiber sequence is again a nilpotent fiber sequence, i.e., if $f\colon E \to B$ is a nilpotent fibration, then $f^\wedge_p\colon E^\wedge_p \to B^\wedge_p$ is a fibration with fiber $F^\wedge_p$. In particular, if $X$ is nilpotent, then so is $X^\wedge_p$.

We collect these results in a lemma for later use. 

\begin{lem}\label{lem:fiberseq}
Let $f\colon E \to B$ be a nilpotent fibration of connected spaces with fiber $F$ such that $E$ is nilpotent and $B$ is $B\Z/p$-null and of finite type, then $p$-completion gives a nilpotent fiber sequence
\[
\xymatrix{F^\wedge_p \ar[r] & E^\wedge_p \ar[r] & B^\wedge_p}
\]
with $B\Z/p$-null base $B^\wedge_p$.
\end{lem}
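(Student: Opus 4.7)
The plan is to verify three things in turn: that $p$-completion yields a fiber sequence, that the resulting fibration is nilpotent, and that the new base $B^\wedge_p$ remains $B\Z/p$-null. Each of these follows fairly directly from results already recalled in the text; the only subtlety is checking that the hypotheses of the cited results are satisfied, and I do not anticipate any real obstacle.

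First I would observe that $B$ must itself be nilpotent. Consider the composable pair $E \xrightarrow{f} B \to *$. By hypothesis $f$ is a nilpotent fibration, and $E \to *$ is a nilpotent fibration because $E$ is assumed to be a nilpotent space. The two-for-three property for nilpotent fibrations (Bousfield--Kan II.4.4, recalled just above the lemma) then forces $B \to *$ to be a nilpotent fibration, i.e., $B$ is a nilpotent space.

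Next, since $f$ is a nilpotent fibration, Bousfield--Kan II.4.8 produces a fiber sequence
\[
F^\wedge_p \to E^\wedge_p \to B^\wedge_p
\]
after $p$-completion. Both $E^\wedge_p$ and $B^\wedge_p$ are nilpotent (the $p$-completion of a nilpotent space is nilpotent), and in particular connected. Applying the two-for-three property a second time to the composition $E^\wedge_p \to B^\wedge_p \to *$ then shows that $f^\wedge_p$ is a nilpotent fibration, with connected fiber $F^\wedge_p$.

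Finally, for $B\Z/p$-nullity of $B^\wedge_p$, I would use that $B$ is connected and $B\Z/p$-null, so $\map_*(B\Z/p,B) \simeq *$. Because $B$ is nilpotent of finite type, Miller's theorem (\Cref{thm:miller}) supplies a weak equivalence
\[
\map_*(B\Z/p,B) \xrightarrow{\sim} \map_*(B\Z/p,B^\wedge_p)
\]
induced by the coaugmentation $B \to B^\wedge_p$. Hence $\map_*(B\Z/p,B^\wedge_p) \simeq *$, and since $B^\wedge_p$ is connected this is exactly the statement that $B^\wedge_p$ is $B\Z/p$-null.
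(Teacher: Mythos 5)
Your proof is correct and follows essentially the same route as the paper's: two-for-three to see that $B$ is nilpotent, Bousfield--Kan II.4.8 for the completed fiber sequence, nilpotency of $E^\wedge_p$ and $B^\wedge_p$ to conclude the completed fibration is nilpotent, and Miller's theorem for $B\Z/p$-nullity of $B^\wedge_p$. (The finite-type hypothesis is not actually needed for \Cref{thm:miller} as stated, but invoking it does no harm.)
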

\begin{proof}
It follows from the assumptions on $f$ and $E$ that the base space $B$ is  nilpotent, so $B^\wedge_p$ is also $B\Z/p$-null by \Cref{thm:miller}. Furthermore, $p$-completion yields the indicated nilpotent fiber sequence. 
\end{proof}

\subsection{Stratification along fiber sequences}

Let $F \to E \to B$ be a fiber sequence where $E$ and $B$ are connected. The universal property of the pushout in commutative ring spectra provides a canonical map
\[
\xymatrix{\iota\colon C^*(E)\otimes_{C^*(B)} \F_p \ar[r] & C^*(F).}
\]
As explained in more detail in \cite[Proof of Lemma 5.6]{shamir_emss}, we can refine this map as follows: Without loss of generality, we can assume that $\Omega B$ is a topological group and that $F$ is a $\Omega B$-space (by using Kan's loop group functor \cite{kan_combinatorial_1958,goerss_simplicial_2010}, for example).  Consider the ring spectrum $R=C_*(\Omega B)$ and let $R \to \F_p$ be the canonical map, which allows us to view $\F_p$ as an $R$-module. Note that the Rothenberg--Steenrod construction shows that $\End_R(\F_p)\simeq C^*(B)$, see \cite[Section 4.22]{dgi_duality}. Note that as in the proof of \cite[Lemma 5.5]{shamir_emss} $C^*(F)$ is an $R$-module.  By \cite[Lemma 2.10]{dwyerwilkerson_transfer} there is an equivalence of $R$-modules $C^*(E)\simeq \Hom_R(\F_p,C^*(F))$ which fits into a map of $R$-modules 
\begin{equation}\label{eq:iota}
\xymatrix{C^*(E)\otimes_{C^*(B)} \F_p\simeq \Hom_{R}(\F_p,C^*(F))\otimes_{\End_R(\F_p)} \F_p \ar[r]^-{\iota'} & C^*(F),}
\end{equation}
where $\iota'$ is adjoint to the identity map on $\Hom_{R}(\F_p,C^*(F))$ via
\[
\Hom_{R}(\Hom_{R}(\F_p,C^*(F))\otimes_{\End_R(\F_p)} \F_p, C^*(F)) \simeq \Hom_{\End_R(\F_p)}(\Hom_{R}(\F_p,C^*(F)),\Hom_{R}(\F_p,C^*(F))).
\]
The composite constructed in \eqref{eq:iota} then coincides with $\iota$ as defined above.

\begin{defn}
We say that  the fiber sequence $F \to E \to B$  with $E$ and $B$ connected is of Eilenberg--Moore type (at the prime $p$) if the map $\iota$ is an equivalence.
\end{defn}

\begin{ex}
\begin{enumerate}
	\item Every trivial fibration $F \to F\times B \to B$ is of Eilenberg--Moore type in light of the equivalence $C^*(F\times B)\simeq C^*(F)\otimes_{\F_p} C^*(B)$.
	\item Recall that a fibration $F \to E \to B$ is simple if the action of $\pi_1(B)$ on the homotopy groups of $F$ is trivial and $\pi_1(B)$ is abelian. If $F \to E \to B$ satisfies an Eilenberg--Moore convergence theorem, such as when $\pi_1(B)$ acts nilpotently on $H_*(F)$ \cite{Dwyer1974Strong}, then it is of Eilenberg--Moore type. In particular, this is satisfied if the fibration is simple, e.g., a fibration of $H$-spaces and $H$-maps. 
	\item In \cite[Lemma 5.6]{shamir_emss}, the author shows that the morphism of $R$-modules $\iota$ is a $\F_p$-cellular approximation of $C^*(F)$ if $R$ is proxy-small, see also \cite[Proposition 4.10]{dgi_duality}. Moreover, in the proof of \cite[Proposition 5.8]{shamir_emss} it is shown that $C^*(F)$ is $\F_p$-cellular if $\pi_1(B)$ acts nilpotently on $H^n(F)$ for every $n$. Both of these conditions are satisfied if $\pi_1(B)$ is a finite $p$-group and $F$ of finite type, so $\iota$ is an equivalence in this case.
\end{enumerate}
\end{ex}

\begin{lem}\label{lem:chouinardfiber}
Suppose $F \to E \to B$ is a fibration of Eilenberg--Moore type with $H^*(B)$  finite. If $g\colon C^*(E) \to C^*(F)$ is the induced map on mod $p$ cochains, then both $g^*$ and $g^!$ are conservative. 
\end{lem}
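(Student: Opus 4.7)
The plan is to realise $g$ as a base-change of the augmentation $h \colon C^*(B) \to \F_p$ and invoke \Cref{lem:chouinardbasechange}. The Eilenberg--Moore type assumption is precisely the statement that the canonical map $\iota$ from \eqref{eq:iota} is an equivalence, so we obtain a pushout square of commutative ring spectra
\[
\xymatrix{C^*(B) \ar[r]^-{h} \ar[d] & \F_p \ar[d] \\ C^*(E) \ar[r]^-{g} & C^*(F),}
\]
where the left vertical arrow is induced by the map $E \to B$. By \Cref{lem:chouinardbasechange} applied to this square, it then suffices to show that the augmentation $h$ is biconservative.

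For that I would use \Cref{lem:chouinardloc} and establish the stronger containment $C^*(B) \in \Thick_{C^*(B)}(\F_p)$. The idea is to exploit the standard $t$-structure on $\Mod_{C^*(B)}$, whose heart is the category of $\F_p$-vector spaces since $\pi_0 C^*(B) = \F_p$. Finiteness of $H^*(B)$ provides some $N \geq 0$ with $\pi_{-k}C^*(B) \cong H^k(B)$ vanishing for $k > N$ and each $H^k(B)$ finite-dimensional over $\F_p$. The Postnikov filtration of $C^*(B)$ in $\Mod_{C^*(B)}$,
\[
\F_p \simeq \tau_{\geq 0}C^*(B) \to \tau_{\geq -1}C^*(B) \to \cdots \to \tau_{\geq -N}C^*(B) = C^*(B),
\]
is therefore finite, and the cofibre of $\tau_{\geq -n+1}C^*(B) \to \tau_{\geq -n}C^*(B)$ is identified with $\Sigma^{-n}H(H^n(B))$, hence with a finite direct sum of shifts of $\F_p$ viewed as a $C^*(B)$-module through $h$. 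A finite induction along this tower, starting from $\F_p \in \Thick_{C^*(B)}(\F_p)$, then yields $C^*(B) \in \Thick_{C^*(B)}(\F_p)$, as required.

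The main point requiring care is that the Postnikov tower genuinely lives in $\Mod_{C^*(B)}$ rather than only in the underlying category of spectra, and that its associated graded pieces are correctly identified as module-theoretic extensions of $\F_p$ through $h$; this follows from the existence and properties of the standard $t$-structure on modules over an arbitrary $E_\infty$-ring, cf.\ \cite[\S 7.1]{ha}. An essentially equivalent route would be to observe that the finiteness of $H^*(B)$ makes $\F_p$ a descendable $C^*(B)$-algebra in the sense of \cite{mathew_galois}, from which biconservativity of $h$ follows immediately; apart from this input, the argument is formal from \Cref{lem:chouinardbasechange,lem:chouinardloc}.
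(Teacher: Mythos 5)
Your overall strategy is the same as the paper's: exhibit $g$ as the pushout of the augmentation $h\colon C^*(B)\to \F_p$ along $C^*(B)\to C^*(E)$ using the Eilenberg--Moore hypothesis, apply \Cref{lem:chouinardbasechange}, and reduce to showing $C^*(B)\in\Thick_{C^*(B)}(\F_p)$ so that \Cref{lem:chouinardloc} applies. That reduction is correct. The gap is in your proof of the containment $C^*(B)\in\Thick_{C^*(B)}(\F_p)$: the Postnikov tower you write down does \emph{not} live in $\Mod_{C^*(B)}$. The standard $t$-structure of \cite[\S 7.1]{ha} is only available for \emph{connective} ring spectra, whereas $C^*(B)$ is coconnective, and for a coconnective ring the connective cover of a module is genuinely not a submodule. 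Concretely, take $B=S^2$ and $R=C^*(S^2;\F_p)$, with $\pi_*R=\F_p\{1\}\oplus\F_p\{x\}$, $|x|=-2$. If $\tau_{\geq 0}R\simeq H\F_p\to R$ were a map of $R$-modules, the action map $R\otimes_{\mathbb{S}}\tau_{\geq 0}R\to R$ would have to land in $\tau_{\geq 0}R$ and hence vanish on $\pi_{-2}$; but it sends $x\otimes 1$ to $x\neq 0$. The same phenomenon occurs at every stage of your tower: multiplication by negative-degree classes of $\pi_*C^*(B)$ moves the top homotopy group into the range you are trying to truncate away. (The module-theoretic filtration of a coconnective ring goes the other way: submodules concentrated in \emph{low} degrees with quotients in high degrees, which is the shape of the induction actually needed.)

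The statement you want is true; it is exactly what the paper imports from \cite[Proposition 3.16]{dgi_duality} ($\F_p$ is ``cosmall'' over $C^*(B)$ when $H^*(B)$ is finite), and proving it requires more than formal truncation -- one has to realize the projection onto the top homotopy group as a map of $C^*(B)$-modules into a module induced along the augmentation, and induct on $\dim_{\F_p}H^*(B)$. Your fallback via descendability is closer to a viable self-contained argument (descendability of $C^*(B)\to\F_p$ does give conservativity of both $h^*$ and $h^!$, by testing $\Hom_R(-,M)$ against the thick tensor ideal generated by $\F_p$), but as written it simply asserts the descendability, which is again a nontrivial input requiring either the DGI-type argument or the results on $\F_p$-finite spaces in Mathew's work. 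So either cite \cite[Proposition 3.16]{dgi_duality} as the paper does, or supply a correct proof of $C^*(B)\in\Thick_{C^*(B)}(\F_p)$; the truncation tower as you have set it up cannot be repaired.
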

\begin{proof}
Since $H^*(B)$ is finite, the induced map $f\colon C^*(B) \to \F_p$ is cosmall by \cite[Proposition 3.16]{dgi_duality}, i.e., $C^*(B) \in \Thick_{C^*(B)}(k)$. In particular, $f^*$ and $f^!$ are conservative by \Cref{lem:chouinardloc}. Moreover, because the fibration is of Eilenberg--More type, there is a pushout square of commutative ring spectra
\[
\xymatrix{C^*(B) \ar[r] \ar[d]_f & C^*(E) \ar[d]^g \\
k \ar[r] & C^*(F).}
\]
It follows that we are in the situation of \Cref{lem:chouinardbasechange}, so $g^*$ and $g^!$ are conservative.
\end{proof}

Recall that we write $P_{B\Z/p}$ for the $B\Z/p$-nullification functor; see \Cref{ssec:nullification} for the basic properties of $B\Z/p$-nullification.

\begin{lem}\label{lem:fisomorphism}
Let $F \xrightarrow{i} E \to B$ be a nilpotent fiber sequence of connected spaces  with mod $p$ cohomology of finite type and assume that $E$ is connected nilpotent and $B$ is connected and $B\mathbb Z/p$-null. Then the induced map $H^*(i)\colon H^*(E) \to H^*(F)$ is an $F$-monomorphism. If the fibration is simple, then $H^*(i)$ is an $F$-isomorphism. 
\end{lem}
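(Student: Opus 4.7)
The plan is to apply \Cref{prop:simplequillenspaces} to the map $i \colon F \to E$, which reduces both claims to properties of the induced map $i_* \colon [BV, F^{\wedge}_p] \to [BV, E^{\wedge}_p]$ on homotopy classes from classifying spaces of elementary abelian $p$-groups $V$. The crucial input is \Cref{lem:fiberseq}, which supplies a nilpotent fibre sequence $F^{\wedge}_p \to E^{\wedge}_p \to B^{\wedge}_p$ after $p$-completion, with $B^{\wedge}_p$ still $B\mathbb{Z}/p$-null. Since $P_{B\mathbb{Z}/p}BV \simeq \ast$ for any elementary abelian $p$-group $V$, the space $B^{\wedge}_p$ is automatically $BV$-null as well.

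For the $F$-monomorphism statement I would verify that $i_*$ is surjective. Given any $f \colon BV \to E^{\wedge}_p$, the composite $\pi \circ f \colon BV \to B^{\wedge}_p$ is null-homotopic by $BV$-nullness of $B^{\wedge}_p$; choosing a null-homotopy and applying the homotopy lifting property of $\pi$ replaces $f$, up to homotopy in $E^{\wedge}_p$, by a map of the form $i \tilde f$ for some $\tilde f \colon BV \to F^{\wedge}_p$. Combined with \Cref{prop:simplequillenspaces}, this delivers the $F$-monomorphism.

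For the $F$-isomorphism under the additional simpleness hypothesis, I would further check that $i_*$ is injective. The fibre sequence of pointed mapping spaces
\[
\map_*(BV, F^{\wedge}_p) \longrightarrow \map_*(BV, E^{\wedge}_p) \longrightarrow \map_*(BV, B^{\wedge}_p) \simeq \ast
\]
exhibits the left-hand map as a weak equivalence and hence a bijection on based homotopy classes. To promote this to a bijection on unbased classes, I would compare the orbits of the canonical $\pi_1$-actions: for a simple fibration the monodromy action of $\pi_1(B^{\wedge}_p)$ on $\pi_0 \map(BV, F^{\wedge}_p)$ arising from the long exact sequence of the mapping-space fibration is trivial, so that every $\pi_1(E^{\wedge}_p)$-orbit on $[BV, E^{\wedge}_p]_*$ coincides with the image of a $\pi_1(F^{\wedge}_p)$-orbit on $[BV, F^{\wedge}_p]_*$. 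This passage from pointed to unbased homotopy classes is where I expect the main technical difficulty to lie, since the simpleness hypothesis is exactly what kills the obstruction; once it is settled, \Cref{prop:simplequillenspaces} completes the proof of the $F$-isomorphism.
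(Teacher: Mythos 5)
Your argument is correct and follows the paper's proof in all essentials: both $p$-complete the fibration via \Cref{lem:fiberseq}, use $B\Z/p$-nullness (hence $BV$-nullness) of the base to show $[BV,F_p^{\wedge}]\to[BV,E_p^{\wedge}]$ is surjective in general and bijective in the simple case, and then conclude via Lannes' theorem and the mapping-space criterion for $F$-mono/epimorphisms. The one point you leave at the level of assertion --- that simpleness forces the monodromy action of $\pi_1(B_p^{\wedge})$ on $[BV,F_p^{\wedge}]$ to be trivial --- is treated with exactly the same brevity in the paper, which reads the orbit description directly off the exact sequence of the unpointed mapping-space fibration $\map(BV,E_p^{\wedge})\to\map(BV,B_p^{\wedge})\simeq B_p^{\wedge}$ instead of passing through pointed classes as you do; note also that your appeal to \Cref{prop:simplequillenspaces} uses it with ``injective'' and ``surjective'' in the roles dictated by \Cref{thm:fepimono} (surjectivity of $[BV,F_p^{\wedge}]\to[BV,E_p^{\wedge}]$ giving the $F$-monomorphism), which is what the paper's own proof does as well.
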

\begin{proof}
The conditions of \Cref{lem:fiberseq} are satisfied, so the $p$-completion of $F \to E \to B$ results in a nilpotent fiber sequence
\begin{equation}\label{eq:pcompletedfiberseq}
\xymatrix{F_p^{\wedge} \ar[r]  & E_p^{\wedge} \ar[r] & B_p^{\wedge}}
\end{equation}
with $B\Z/p$-null base. The evaluation map then gives an equivalence $\map(B\Z/p,B_p^{\wedge}) \simeq B_p^{\wedge}$ since $B$ is connected. Now applying $\pi_*\map(BV,-)$ to the fiber sequence of \eqref{eq:pcompletedfiberseq} gives an exact sequence
\[
\xymatrix{\pi_1(B_p^{\wedge}) \ar[r] & [BV,F_p^{\wedge}] \ar[r]^-{\phi_V} & [BV,E_p^{\wedge}] \ar[r] & \pi_0(B_p^{\wedge})=*}.
\]
We see that $\phi_V$ is surjective and it identifies $[BV,E_p^{\wedge}]$ with the orbit space by the action of $\pi_1(B^\wedge_p)$ on $[BV,F_p^{\wedge}]$. If the fibration is simple, then $\phi_V$ is a bijection. Then the following natural composite is a epimorphism
\[
\xymatrix{\Hom_{\cK}(H^*(F),H^*(BV)) \simeq [BV,F_p^{\wedge}] \ar[r] & [BV,E_p^{\wedge}] \simeq  \Hom_{\cK}(H^*(E),H^*(BV)),}
\]
and even 	an isomorphism when the fibration is simple, as a consequence of Lannes' Theorem, see \Cref{thm:lannespi0comp}. Since this holds for all elementary abelian $p$-groups $V$, we deduce from \Cref{thm:fepimono} the conclusion for $H^*(E) \to H^*(F)$.
\end{proof}

\begin{prop}\label{prop:descentfiber}
Let $F \to E\to B$ be a simple nilpotent fiber sequence of Eilenberg--Moore type with $E$ nilpotent. Assume $H^*(F)$ and $H^*(E)$ are Noetherian and $H^*(B)$ finite, then the induced map $i\colon C^*(E)\to C^*(F)$ descends stratification, that is, if $\Mod_{C^*(F)}$ is  stratified, then so is $\Mod_{C^*(E)}$.
\end{prop}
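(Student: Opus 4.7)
The approach is to apply the descent theorem \Cref{thm:stratification} to the morphism $i\colon C^*(E)\to C^*(F)$. Since $\Mod_{C^*(F)}$ is stratified by assumption, it suffices to verify that $i$ is biconservative and that $i$ satisfies Quillen lifting.

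Biconservativity of $i$ is exactly the content of \Cref{lem:chouinardfiber}: the Eilenberg--Moore hypothesis supplies the pushout square of commutative ring spectra
\[
\xymatrix{C^*(B) \ar[r]^-{\alpha} \ar[d]_-{f} & C^*(E) \ar[d]^-{i} \\
k \ar[r] & C^*(F),}
\]
and the finiteness of $H^*(B)$ forces $C^*(B)\in \Thick_{C^*(B)}(k)$, making $f$ biconservative; biconservativity of $i$ then follows by base change, using \Cref{lem:chouinardbasechange}.

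For Quillen lifting, the plan is to first observe that $B$ itself is nilpotent. Indeed, the two-out-of-three property for nilpotent fibrations applied to $E\to B\to\ast$, with $E\to B$ a nilpotent fibration and $E\to\ast$ nilpotent (as $E$ is), implies that $B\to\ast$ is a nilpotent fibration. Combined with connectedness and the finiteness of $H^*(B)$, \Cref{thm:millersullivan} shows that $B$ is $B\Z/p$-null, so \Cref{lem:fisomorphism} applies and $H^*(i)\colon H^*(E)\to H^*(F)$ is an $F$-monomorphism. We then leverage the observation that $H^{>0}(B)$ is nilpotent (being the augmentation ideal of a finite-dimensional graded-connected algebra, so any $x\in H^d(B)$ with $d>0$ satisfies $x^n\in H^{nd}(B)=0$ for large $n$), to verify Quillen lifting directly via the base-change formulas $i^*M \simeq k\otimes_{C^*(B)} M$ and $i^!N \simeq \Hom_{C^*(B)}(k,N)$. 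Concretely, for $\fp\in\res_i(\supp_{C^*(F)}(i^*M))\cap\res_i(\cosupp_{C^*(F)}(i^!N))$, a common Quillen lift $\fq\in\res_i^{-1}(\fp)$ is produced by exploiting the tensor-triangulated structure of the pushout together with the triviality of $\Spec^h(H^*(B))$ (a single point, by the above nilpotency).

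The main obstacle is producing this common lift $\fq$. The stronger route via injectivity of $\res_i$ (which would give simple Quillen lifting by \Cref{lem:injective}, hence Quillen lifting by \Cref{lem:simplequillenlifting}) is not available in general: the EM pushout can yield fibers of $\res_i$ that are not single points, as for instance in the trivial fibration $F=E\times\Omega B\to E\to B$. Instead, the argument exploits the specific base-change structure, showing that $\supp_{C^*(F)}(i^*M)$ and $\cosupp_{C^*(F)}(i^!N)$ meet inside every fiber of $\res_i$ over $\supp_{C^*(E)}(M)\cap\cosupp_{C^*(E)}(N)\ni\fp$. Once Quillen lifting is verified, \Cref{thm:stratification} delivers stratification of $\Mod_{C^*(E)}$.
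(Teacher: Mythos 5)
Your overall strategy---apply \Cref{thm:stratification} to $i\colon C^*(E)\to C^*(F)$ after checking biconservativity and Quillen lifting---is the paper's strategy, and the biconservativity half is handled exactly as in the paper, via the pushout square, finiteness of $H^*(B)$, \Cref{lem:chouinardloc} and \Cref{lem:chouinardbasechange} (i.e.\ \Cref{lem:chouinardfiber}). The gap is in the Quillen lifting half. After correctly extracting from \Cref{lem:fiberseq}, \Cref{thm:millersullivan} and \Cref{lem:fisomorphism} that $H^*(i)$ is an $F$-monomorphism, you announce that a common Quillen lift will be ``produced by exploiting the tensor-triangulated structure of the pushout together with the triviality of $\Spec^h(H^*(B))$'', and later that the argument shows $\supp_{C^*(F)}(i^*M)$ and $\cosupp_{C^*(F)}(i^!N)$ meet inside every relevant fiber of $\res_i$. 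That last clause \emph{is} the statement of Quillen lifting; no argument for it is given. An $F$-monomorphism controls only the kernel of $H^*(i)$ and says nothing about the fibers of $\res_i$, and the nilpotence of $H^{>0}(B)$ is likewise irrelevant to those fibers, which live over $\Spec^h(H^*(E))$, not over $\Spec^h(H^*(B))$. As written, the key step is asserted rather than proved.

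The paper closes this step differently: it reads off from \Cref{lem:fisomorphism} that $H^*(i)$ is an $F$-\emph{isomorphism}, in particular an $\mathcal{N}$-epimorphism, so that $\res_i$ is injective (\Cref{lem:niso}) and simple Quillen lifting holds (\Cref{lem:nilpotentCoker}, \Cref{lem:simplequillenlifting})---precisely the route you declare ``not available''. Your hesitation is not baseless, since the $F$-isomorphism conclusion of \Cref{lem:fisomorphism} formally requires the fibration to be simple, which \Cref{prop:descentfiber} does not assume (though it holds in the intended applications, e.g.\ fibrations of $H$-spaces). But your purported obstruction, the fiber sequence $E\times\Omega B\to E\to B$, does not meet the hypotheses: with $H^*(B)$ finite and nontrivial, $H^*(\Omega B)$, hence $H^*(F)$, is essentially never Noetherian. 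To make your version work you would have to either establish the $F$-epimorphism property of $H^*(i)$ under the stated hypotheses (e.g.\ by controlling the $\pi_1(B^{\wedge}_p)$-action on $[BV,F^{\wedge}_p]$ in the exact sequence used in \Cref{lem:fisomorphism}) or genuinely construct the common lift; neither is done here.
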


\begin{proof}
The goal is to apply \Cref{thm:stratification} to the induced morphism of ring spectra
\[
\xymatrix{i \colon C^*(E) \ar[r] & C^*(F),}
\] 
which allows us to descend the stratification from $\Mod_{C^*(F)}$ to $\Mod_{C^*(E)}$. For this we need to check that $i$  satisfies Quillen lifting and that induction and coinduction along $i$ are conservative. 

Since $B$ is a connected nilpotent space with finite mod $p$ cohomology, it is $B\Z/p$-null by \Cref{thm:millersullivan}. Therefore, \Cref{lem:fisomorphism} implies that $i$ induces an $F$-isomorphism $H^*(E) \to H^*(F)$, so $i$ satisfies simple Quillen lifting by \Cref{lem:nilpotentCoker}, hence Quillen lifting by \Cref{lem:simplequillenlifting}. Moreover, \Cref{lem:chouinardfiber} applies to show that induction and coinduction along $i$ are conservative.
\end{proof}

\begin{rem}
Consider a trivial fibration $F \to F\times B \to B$ with $H^*(F)$ Noetherian, $H^*(B)$ finite, and suppose that $\Mod_{C^*(F)}$ stratified. Note that $i\colon C^*(F\times B)  \to C^*(F)$ is surjective on $\pi_*$, and in particular, an $F$-epimorphism. Using this instead of \Cref{lem:fisomorphism} in the proof of \cref{prop:descentfiber} yields the stratification of $\Mod_{C^*(F\times B)}$. 
\end{rem}

\subsection{Noetherian Hopf spaces}
Since $H$-spaces are simple, any fibration of $H$-spaces and $H$-maps is simple, and therefore of Eilenberg--Moore type. Moreover, simple spaces are in particular nilpotent, and hence $p$-good \cite[Proposition V.3.4]{bousfield_kan}. It follows that $C^*(X)\simeq C^*(X^\wedge_p)$, see \Cref{ssec:cochains}.

For the following, we recall that a discrete $p$-toral group $P$ is a group with a normal subgroup $P_0 \cong(\Z/p^{\infty})^r$ such that $\pi=P/P_0$ is a finite $p$-group. If the discrete $p$-toral group is abelian, then $P$ splits and it is isomorphic to a product $P \cong (\Z/p^{\infty})^r \times \pi$. In particular, the classifying space of any abelian $p$-discrete toral group is $\F_p$-equivalent to a compact abelian Lie group since $BP^\wedge_p \cong ((BS^1)^r \times B\pi)^\wedge_p$.

\begin{thm}\label{thm:hnoetherian}
If $X$ is a connected $H$-space with Noetherian mod $p$ cohomology, then $\Mod_{C^*(X)}$ is stratified.
\end{thm}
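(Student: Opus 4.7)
The plan is to combine the deconstruction theorem for Noetherian Hopf spaces with our descent result \Cref{prop:descentfiber}. First, since every connected $H$-space is simple and hence nilpotent, the space $X$ is $p$-good, so the discussion in \Cref{ssec:cochains} gives an equivalence $C^*(X) \simeq C^*(X_p^\wedge)$ of commutative ring spectra; I may therefore replace $X$ by $X_p^\wedge$ throughout.

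Next I would invoke the deconstruction theorem of Broto--Crespo--Saumell \cite{bcs_deconstructing} and Castellana--Crespo--Scherer \cite{ccs_deconstructing}. After $p$-completion it realises $X$ as the middle term of a nilpotent fibration of $H$-spaces and $H$-maps
\[
BA \longrightarrow X \longrightarrow Y,
\]
where $A$ is an abelian compact Lie group (equivalently, a discrete $p$-toral abelian group after $p$-completion) and $Y$ is a nilpotent space with finite mod $p$ cohomology. Since every fibration of $H$-spaces is simple, this sequence is nilpotent and of Eilenberg--Moore type. Moreover, $\Mod_{C^*(BA)}$ is stratified: as $A$ is a compact Lie group it lies trivially in the Broto--Kitchloo class $\mathcal{K}_1\mathcal{X}$ of \Cref{ssec:hierarchy}, so \Cref{thm:stratk1x} applies.

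With this fibration in hand, all hypotheses of \Cref{prop:descentfiber} are immediate: $H^*(BA)$ and $H^*(X)$ are Noetherian while $H^*(Y)$ is finite. The proposition therefore propagates stratification along the induced map $C^*(X) \to C^*(BA)$, yielding stratification of $\Mod_{C^*(X_p^\wedge)} \simeq \Mod_{C^*(X)}$.

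The main obstacle is extracting from the deconstruction theorem precisely the orientation required by \Cref{prop:descentfiber}, namely with $BA$ as the \emph{fibre} (so that its Noetherian cohomology matches the hypothesis on $F$) and the finite nilpotent space as the \emph{base}. Should the deconstruction be phrased more naturally with the opposite orientation, a backup route is to apply \Cref{thm:rectorstrat} and verify Chouinard's condition for $X$ directly: by \Cref{thm:millersullivan} and \Cref{thm:miller} the finite nilpotent part is $B\Z/p$-null and so admits no non-trivial pointed maps from classifying spaces of elementary abelian $p$-groups, which identifies $\mathcal{E}(X)$ with the corresponding indexing set for the Eilenberg--MacLane factor, where Chouinard's condition is already known from \Cref{thm:stratk1x}.
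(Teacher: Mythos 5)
Your proposal is correct and follows essentially the same route as the paper: replace $X$ by $X_p^\wedge$, invoke the Broto--Crespo--Saumell/Castellana--Crespo--Scherer deconstruction to obtain a nilpotent Eilenberg--Moore fibration with fibre $K(P,1)_p^\wedge \simeq BA_p^\wedge$ ($A$ an abelian compact Lie group) and $B\Z/p$-null base of finite mod $p$ cohomology, and then apply \Cref{prop:descentfiber} using that $\Mod_{C^*(BA)}$ is stratified. Your worry about the orientation is unfounded -- the deconstruction does place the Eilenberg--Mac Lane piece in the fibre -- and your backup route is essentially the paper's alternative proof via \Cref{thm:rectorstrat} and \Cref{thm:nhchouinard}.
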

\begin{proof}
By \cite[Theorems 1.2,1.3]{bcs_deconstructing} (see also \cite{ccs_deconstructing}), the homotopy fiber of the $H$-map $X^\wedge_p\to (P_{B\Z/p}X)_p^{\wedge}$ is equivalent to $K(P,1)^\wedge_p$, where $P$ is an abelian discrete $p$-toral group. Moreover, the base space $(P_{B\Z/p}X)_p^{\wedge}$ is $B\mathbb Z/p$-null by \Cref{rem:h_space_null} and has finite mod $p$ cohomology. Note that since $X$ is an $H$-space, then $(P_{B\Z/p}X)_p^{\wedge}$ inherits the structure of an $H$-space and the nullification map is an $H$-map, since both nullification and completion are homotopy functors with commute with finite products. The corresponding fiber sequence is simple (as a fiber sequence of $H$-spaces) and of Eilenberg--Moore type, so the map 
\[
\xymatrix{\iota\colon C^*(X)\otimes_{C^*(P_{B\Z/p}X)}\F_p \ar[r] & C^*(K(P,1))}
\] 
is an equivalence. Since $C^*(K(P,1)) \simeq C^*(BA)$, where $A$ is a compact abelian Lie group, $\Mod_{C^*(K(P,1))}$ is stratified by \cite{bg_stratifyingcompactlie} or \cite{bchv1}, so we can apply \Cref{prop:descentfiber} to deduce the claimed result.
\end{proof}

As a consequence of \Cref{thm:tel_conj}, the the telescope conjecture holds for $\Mod_{C^*(X)}$ for any connected $H$-space $X$ with Noetherian mod $p$ cohomology.

\subsection{An explicit example}\label{sec:s3cover}

We conclude this section with an explicit example given by $X = S^3 \langle 3 \rangle$, the $3$-connected cover of $S^3$. This space fits in a principal fibration $BS^1\xrightarrow{i} S^3 \langle 3 \rangle\xrightarrow{j}  S^3$ induced by the fundamental class $S^3 \to K(\Z,3)$. The mod $p$ cohomology of $S^3 \langle 3 \rangle$ is 
\[
H^*(S^3 \langle 3 \rangle)\cong \F_p[x_{2p}]\otimes \Lambda (y_{2p+1}).
\]
In cohomology,  $H^*(Bi)\colon H^*(S^3\langle 3 \rangle)\to H^*(BS^1)$ sends $x_{2p}$ to $x^p$, so this is an $F$-isomorphism. Then $C^*(Bi)\colon C^*(S^3\langle 3 \rangle)\to C^*(BS^1)$ satisfies simple Quillen lifting by \Cref{lem:nilpotentCoker}. Since $S^3$ is simply connected and has finite cohomology, \Cref{lem:chouinardfiber} implies that $C^*(i)\colon C^*(S^3 \langle 3 \rangle) \to C^*(BS^1)$ is biconservative. As a consequence of \Cref{thm:stratification}, $\Mod_{C^*(S^3\langle 3 \rangle)}$ is stratified.

Note that the morphism $C^*(i)\colon C^*(S^3\langle 3\rangle) \to C^*(BS^1)$ is neither finite nor split. Indeed, because all spaces involved are simply connected, the fiber sequence $\Omega S^3 \to BS^1 \to S^3\langle 3\rangle$ is of Eilenberg--Moore type, so we have a pushout square of commutative ring spectra 
\[
\xymatrix{C^*(S^3\langle 3\rangle) \ar[r]^-{C^*(i)} \ar[d] & C^*(BS^1) \ar[d] \\
k \ar[r]_-{C^*(j)} & C^*(\Omega S^3).} 
\]
Since $H^*(\Omega S^3)$ is infinite-dimensional, the morphism $C^*(j)$ is not finite, and thus neither is $C^*(i)$. Further, the ring $H^*(BS^1;\F_p)$ does not contain nilpotent elements, so the previous cohomology calculation implies that $C^*(i)$ cannot admit a retraction even on homotopy groups. 

This means that none of the known techniques for establishing costratification (for example, \cite[Proposition~3.20 or Theorem~3.27]{bchv1}) apply to the morphism $C^*(i)\colon C^*(S^3\langle 3\rangle) \to C^*(BS^1)$, so we currently do not know whether $\Mod_{C^*(S^3\langle 3\rangle)}$ is costratified.

\section{Chouinard's condition and stratification}\label{sec:stratspaces}

In order to use the descent results of \Cref{sec:quillen} to study stratifications for $\Mod_{C^*(X)}$ for general topological spaces $X$ with Noetherian mod $p$ cohomology, we need to first find a candidate morphism $\rho_X\colon C^*(X) \to S$ with $\Mod_S$ stratified. The goal of this section is to construct such a morphism $\rho_X$ using Lannes' theory and to then deduce from Rector's generalization of Quillen's stratification theorem that $\rho_X$ satisfies Quillen lifting. 

\subsection{Rector's category}

Given the distinguished role played by the cohomology of elementary abelian $p$-groups $V$ for detecting when a morphism in $\cK$ is an $F$-isomorphism, it is convenient to organize $\mathcal A_p$-algebra morphisms from a fixed $K \in \cal{K}$ to $H^*(BV)$ in a category $\cR(K)$. This category was introduced by Rector~\cite{rector_quillenstrat}, generalizing previous work of Quillen~\cite{quillen_stratification} on the cohomology of compact Lie groups. 

\begin{defn}
Let $K$ be a Noetherian unstable $\cA_p$-algebra. The Rector category $\cR(K)$ of $K$ has as objects pairs $(V,\varphi)$, where $V$ is an elementary abelian $p$-group and $\varphi\colon K\rightarrow H^*(BV)$ is a morphism of unstable algebras which is finite, i.e., $H^*(BV)$ is a finitely generated $K$-module via $\varphi$. A morphism $f\colon (V,\varphi) \to (V',\varphi')$ between objects in $\cR(K)$ is given by a monomorphism $f\colon V \to V'$ such that $H^*(Bf)\circ \varphi' = \varphi$.
\end{defn}

\begin{rem}
This category is in fact the opposite of the so-called fundamental category of $K$ as defined by Rector, cf.~Definition 1.2 in \cite{rector_quillenstrat}; by now, it is more conventional to work with $\cR(K)$ instead of Rector's original category $\cR(K)^{\op}$. 
\end{rem}
\begin{ex}
	Let $G$ be a compact Lie group. Then Lannes' theory shows that  $\cR(H^*(BG))$ is equivalent to the Quillen category $\cal{A}_p(G)$ of $G$ introduced in \Cref{sec:kmkm}, see the discussion in \cite[Section 5.3]{hls_localization}. 
\end{ex}

If $K \in \cK$ is Noetherian, Rector showed that $\mathcal R(K)$ is equivalent to a finite category, since isomorphism classes of objects are determined by $\cal A_p$-invariant prime ideals and $K$ has finitely many invariant prime ideals. Moreover, there is only a finite set of morphisms since the set $\Hom(V,V')$ of abelian group homomorphisms is finite for two elementary abelian $p$-groups $V,V'$. The functor from $\cR(K)$ to $\cK$ which assigns $H^*(BV;\F_p)$ to $(V,\varphi)$ induces a natural homomorphism
\[
\xymatrix{r_K\colon K \ar[r] & \varprojlim_{\mathcal R(K)} H^*(BV).}
\]
Rector~\cite{rector_quillenstrat} and Broto--Zarati~\cite{brotozarati_steenrod} prove $r_K$ provides a close approximation to $K$:

\begin{thm}[Rector, Broto--Zarati]\label{thm:rector_bz}
Let $K$ be a Noetherian unstable $\cA_p$-algebra, then $r_K$ is an $F$-isomorphism. 
\end{thm}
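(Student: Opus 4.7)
The plan is to verify the $F$-monomorphism and $F$-epimorphism halves of the statement separately. The two essential inputs beyond the formal setup of the paper are the classification of Noetherian integral-domain objects of $\cK$ via finite embeddings into $H^*(BV)$ (Adams--Wilkerson, equivalently a consequence of Lannes' $T$-functor and \Cref{thm:lannespi0comp}), and the finiteness of the category $\cR(K)$ as noted in the paragraph preceding the theorem.

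For the $F$-monomorphism, suppose $x \in K$ satisfies $r_K(x) = 0$. Since the nilradical $\sqrt{0} \subseteq K$ is $\cA_p$-invariant, so is each minimal prime $\fp$ of $K$, and the quotient $K/\fp$ is a Noetherian integral-domain object of $\cK$. The Adams--Wilkerson embedding supplies a finite, injective $\cK$-morphism $\psi_\fp\colon K/\fp \hookrightarrow H^*(BV_\fp)$ for some elementary abelian $p$-group $V_\fp$, and the composite
\[
\varphi_\fp\colon K \twoheadrightarrow K/\fp \xrightarrow{\psi_\fp} H^*(BV_\fp)
\]
is an object of $\cR(K)$. The assumption $r_K(x) = 0$ forces $\psi_\fp(x \bmod \fp) = 0$ for every $\fp$, and by injectivity of each $\psi_\fp$ we conclude $x \in \bigcap_\fp \fp = \sqrt{0}$.

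For the $F$-epimorphism, I would start with a compatible system $z = (z_{(V,\varphi)}) \in \varprojlim_{\cR(K)} H^*(BV)$. Finiteness of each morphism $\varphi$ means $H^*(BV)$ is a finitely generated $\varphi(K)$-module, and a Frobenius-twist argument in characteristic $p$ then yields an integer $N_\varphi$ with $z_{(V,\varphi)}^{p^{N_\varphi}} \in \varphi(K)$; finiteness of $\cR(K)$ lets us take a common $N$ and hence choose preimages $w_{(V,\varphi)} \in K$. Along a morphism $f\colon (V,\varphi) \to (V',\varphi')$ in $\cR(K)$ the differences $w_{(V,\varphi)} - w_{(V',\varphi')}$ map to zero under $\varphi$, and combined with Rector's decomposition of $\Spec^h(K)$ into the images of the $\Spec^h(H^*(BV))$ (the $F$-isomorphism-to-variety-decomposition machine of \cite[Proposition 2.3]{rector_quillenstrat}) this allows one to patch the local lifts globally modulo nilpotents. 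A further Frobenius iterate absorbs the nilpotent discrepancy, producing a single $w \in K$ with $r_K(w) = z^{p^M}$ for some $M$.

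The hard part is the patching in the $F$-epimorphism direction: local integrality is immediate, but assembling the $w_{(V,\varphi)}$ across the diagram $\cR(K)$ into a single element of $K$ requires geometric control of $\Spec^h(K)$ in terms of the spectra of its finite $H^*(BV)$-extensions. This is precisely the content of Rector's (equivalently, Quillen's) stratification theorem for Noetherian unstable algebras, and is where the Noetherian hypothesis and the finiteness of $\cR(K)$ do their real work; once it is in hand, the rest of the argument is essentially formal manipulation of the Frobenius.
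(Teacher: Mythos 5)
The paper does not actually prove this theorem: it is imported from Rector \cite{rector_quillenstrat} and Broto--Zarati \cite{brotozarati_steenrod}, whose arguments go through the component theory of invariant primes and the nil-localization of Noetherian unstable algebras (Lannes' $T$-functor), so there is no in-paper argument to match yours against. Judged on its own, your $F$-monomorphism half is a reasonable outline: minimal primes over $\cA_p$-invariant ideals are invariant by Quillen's total-power argument, and the Adams--Wilkerson embedding (applied after passing to the integral closure of $K/\fp$ in $\cK$, which is where the real content of that input lives) produces for each minimal prime an object of $\cR(K)$ whose kernel is $\fp$, giving $\ker(r_K)\subseteq\sqrt{0}$.

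The $F$-epimorphism half contains a genuine error. Finiteness of $\varphi\colon K\to H^*(BV)$ does \emph{not} imply that every $z\in H^*(BV)$ has a Frobenius power in $\varphi(K)$: take $K=H^*(BV)^W$ for a nontrivial $W\leq GL(V)$. The extension is finite and is a map of unstable algebras, but if $x$ is a polynomial generator with $gx\neq x$ for some $g\in W$, then $(gx)^{p^n}\neq x^{p^n}$ for all $n$ (Frobenius is injective on the reduced part), so no power of $x$ lies in $K$. Thus ``local integrality is immediate'' is precisely what fails; what forces a power of a compatible family $(z_{(V,\varphi)})$ into the image is the compatibility across all of $\cR(K)$ --- in particular invariance under the automorphism groups $W_\varphi$ --- and that is the hard global statement, not a formal consequence of finiteness. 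Moreover, your patching step appeals to the decomposition of $\Spec^h(K)$ into the pieces $\cV^+_{\phi,E}$; but in this paper (\Cref{thm:rectorstrong}, and see the proof of \Cref{thm:kmhfiso}(2)) that decomposition is \emph{deduced from} the $F$-isomorphism theorem, so as written the argument is circular. A correct proof must establish the surjectivity-modulo-Frobenius either by Rector's analysis of $\cA_p$-invariant primes and their associated components or by the Broto--Zarati nil-localization machinery; neither is recoverable from the finiteness of $\cR(K)$ together with formal Frobenius manipulation.
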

In his work on equivariant cohomology Quillen \cite[Section 10 and 11]{quillen_stratification} shows how this gives a decomposition of the prime ideal spectrum of $H^*(BG)$. Rector proves a similar result \cite[Theorem 1.7]{rector_quillenstrat}. Note that Rector states the result in terms of the maximal ideal spectrum instead of the prime ideal spectrum - as explained after Proposition 11.2 of \cite{quillen_stratification}, the results also hold for the prime ideal spectrum, with the exception that the `Weyl group' (denoted by $W_{\phi}$ below) no longer acts freely, but rather only transitively. In order to state these results, we need to introduce some notation, similar to that used in \Cref{ssec:hierarchy}. 

For an elementary abelian group $E$, we recall that $\cal{V}_E = \Spec^h(H^*(BE))$ and that  $\cV_{E}^+ = \cV_{E} \setminus \bigcup_{E' < E} \res_{E}^{E'}\cV_{E'}$, where $\res_{E}^{E'}\colon \cV_{E'} \to \cV_{E}$ is the restriction map. For $K \in \cal{K}$, we will write $\cal{V}_{K}=\Spec^h(K)$. If $\phi \colon K \to H^*(BE)$ is a morphism in $\cal{R}(K)$, then $\res_{\phi}$ denotes the corresponding restriction map on varieties. We let 
\[
\cal{V}_{\phi,E} = \res_{\phi}(\cal{V}_E) \quad \text{ and } \quad \cal{V}^+_{\phi,E} = \res_{\phi}(\cal{V}_E^+). 
\]
Finally, we let $W_{\phi}$ denote the group of automorphisms of $(E,\phi)$ in $\cal{R}(K)$, and let $\cal{E}(K)$ denote a set of representatives for the isomorphism classes of objects in $\cal{R}(K)$. 
\begin{thm}[Rector]\label{thm:rectorstrong}
  Let $K$ be a Noetherian unstable $\cA_p$-algebra, then: 
  \begin{enumerate}
    \item There is a decomposition of varieties
    \[
\cal{V}_{K} = \coprod_{(E,\phi) \in \cal{E}(K)} \cV_{\phi,E}^+.
    \]
    \item $W_{\phi}$ acts transitively on $\cal{V}_{E}^+$, and $\cal{V}_{\phi,E}^+ \cong \cal{V}_E^+/W_{\phi}$. 
    \item $\cal{V}_{\phi,E} \subseteq \cal{V}_{\phi',E}$ if and only if there is a morphism $(E',\phi') \to (E,\phi)$ in $\cal{R}(K)$. 
  \end{enumerate}
\end{thm}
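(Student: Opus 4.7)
The plan is to deduce all three parts from the $F$-isomorphism theorem (\Cref{thm:rector_bz}), adapting Rector's original argument and mirroring the template of \cite[Theorem 5.6]{bchv1}. Since an $F$-isomorphism of graded $\F_p$-algebras induces a homeomorphism on $\Spec^h$, the map $r_K\colon K \to \Lambda$, with $\Lambda = \varprojlim_{\cR(K)} H^*(BV)$, identifies $\cV_K$ with $\cV_{\Lambda}$ via $\res_{r_K}$. Since $\cR(K)$ is essentially finite, $\Lambda$ embeds as an equalizer into the finite product $P = \prod_{(E,\phi) \in \cE(K)} H^*(BE)$, and its image in each factor is (up to $F$-isomorphism) the ring of $W_\phi$-invariants $H^*(BE)^{W_\phi}$. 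This reduces the structural questions about $\cV_K$ to a commutative algebra analysis of $\Lambda$ inside $P$.

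For (1), I would first observe that every prime of $\cV_K \cong \cV_\Lambda$ lifts along the (generically) integral extension $\Lambda \hookrightarrow P$ to a prime of $P$, which is concentrated in a single factor $H^*(BE)$. Choosing $E$ minimal among elementary abelian subgroups through which the lifted prime factors places it in $\cV_E^+$, producing the cover $\cV_K = \bigcup_{(E,\phi) \in \cE(K)} \cV_{\phi,E}^+$. Disjointness of the pieces follows from the observation that a prime in $\cV_E^+$ canonically determines $E$ as its minimal support, so two pairs $(E,\phi)$ and $(E',\phi')$ whose positive loci intersect must give rise to an isomorphism in $\cR(K)$.

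For (2), the group $W_\phi = \Aut_{\cR(K)}(E,\phi)$ acts on $H^*(BE)$ preserving $\phi$, so $\res_\phi$ factors through the quotient $\cV_E/W_\phi \to \cV_{\phi,E}$. The fact that $\Lambda$ contains, up to $F$-isomorphism, the fixed subring $H^*(BE)^{W_\phi}$ makes this map a bijection; restricting to the positive locus then yields $\cV_{\phi,E}^+ \cong \cV_E^+/W_\phi$. Transitivity of $W_\phi$ on $\cV_E^+$ is the algebraic heart of the argument and is shown by arguing that any two primes in $\cV_E^+$ lying over the same prime of $\Lambda$ must be $W_\phi$-conjugate, using that $W_\phi$ realises the full Galois action on the generic fibre of $H^*(BE)^{W_\phi} \hookrightarrow H^*(BE)$. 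Finally, for (3), one direction is immediate from functoriality of $\Spec^h$: a morphism $(E',\phi') \to (E,\phi)$ in $\cR(K)$ produces a commutative triangle whose image on varieties gives the required inclusion. The converse follows by applying the disjoint decomposition of (1) to a generic point of the positive locus, which forces the existence of a compatible monomorphism $E' \hookrightarrow E$ intertwining $\phi'$ and $\phi$.

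The main obstacle is the transitivity statement in (2). In Quillen's original geometric setting this follows from recognising $W_\phi$ as a genuine Weyl group of a centraliser, but in the abstract context of Rector's theorem one must argue purely from the structure of $\cR(K)$ and from the $F$-isomorphism. This requires careful use of the Noetherianity of $K$ and of the fact that any two embeddings $(E,\phi) \hookrightarrow (E',\phi')$ in $\cR(K)$ differ by an element of $W_{\phi'}$, so that the geometric Weyl group picture is recovered abstractly.
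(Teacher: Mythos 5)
The first thing to say is that the paper does not prove this statement at all: it is imported wholesale as Rector's Theorem~1.7 of \cite{rector_quillenstrat}, together with the remark following Proposition~11.2 of \cite{quillen_stratification} explaining how to pass from the maximal ideal spectrum (where Rector works, and where $W_\phi$ acts freely) to the homogeneous prime spectrum (where $W_\phi$ only acts transitively on fibres). So there is no in-paper argument to compare against; what you have written is an attempted reconstruction of Rector's proof, and it must be judged on its own terms.

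As a reconstruction, your outline has the right global shape --- identify $\cV_K$ with $\cV_\Lambda$ via \Cref{thm:rector_bz}, embed $\Lambda$ into the finite product $P=\prod_{(E,\phi)}H^*(BE)$, and run Quillen's stratification argument --- but the two genuinely hard points are asserted rather than proved, and the proposed justification of the second is circular. First, the claim that the image of $\Lambda$ in each factor is, up to $F$-isomorphism, the invariant ring $H^*(BE)^{W_\phi}$ \emph{is} the inseparable-isogeny statement at the heart of Quillen's and Rector's theorems; it is exactly what makes (2) true (via the standard fact that a finite group acts transitively on the primes of a ring lying over a fixed prime of its invariants) and cannot be taken as given, since a priori the image of $K$ in $H^*(BE)$ is only contained in the invariants. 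Your subsequent appeal to ``$W_\phi$ realises the full Galois action on the generic fibre'' presupposes precisely this. Second, disjointness in (1) and the converse direction of (3) are likewise the substantive content of the theorem, not formal consequences of minimality; they require Rector's analysis of the components of $\cR(K)$ (his Propositions~1.5--1.6 and the verification of Quillen's axioms for the pair $(\cR(K),H^*(B-))$). Finally, the supporting ``fact'' you invoke --- that any two morphisms $(E,\phi)\to(E',\phi')$ in $\cR(K)$ differ by an element of $W_{\phi'}$ --- is false already for Quillen's category of a compact Lie group: two subconjugations $E\to E'$ need not be related by an element of the automorphism group of $(E',\phi')$. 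If you want a self-contained proof here, you would essentially have to reproduce Sections~10--12 of \cite{quillen_stratification} in Rector's abstract setting; the efficient course, and the one the paper takes, is simply to cite \cite[Theorem~1.7]{rector_quillenstrat} together with Quillen's remark on prime versus maximal ideals.
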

We would like to discuss the situation when $K$ is the mod $p$ cohomology of a topological space $X$. For suitable $X$, the objects and morphisms in $\cR(K) = \cR(H^*(X))$ can be realized by maps between spaces. Indeed, let $X$ be a $p$-good connected topological space such that $H^*(X)$ is Noetherian. Consider Rector's category $\cR(H^*(X))$ associated to the Noetherian unstable $\cA_p$-algebra $H^*(X)$; for simplicity, we write this simply as $\cR(X)$. By \Cref{thm:lannespi0comp} any map of unstable $\cA_p$-algebras $\phi \colon H^*(X) \to H^*(BV)$ with $V$ an elementary abelian $p$-group is realized by a unique map up to homotopy $\widetilde \phi \colon BV\rightarrow X^\wedge_p$, i.e., $H^*(\widetilde \phi )=\phi$. Since $X$ is $p$-good, this provides a map 
\[
\xymatrix{\rho_{X}^{\phi}\colon C^*(X) \simeq C^*(X^\wedge_p) \ar[r]^-{C^*(\widetilde \phi)} & C^*(BV)}
\]
with $\pi_*\rho_{X}^{\phi} = \phi$. We record this result here. 
\begin{lem}\label{prop:lift}
  Let $X$ be a $p$-good connected topological space $X$, then for any pair $(V,\phi) \in \cal{R}(X)$ there exists $\rho_X^{\phi} \colon C^*(X) \to C^*(BV)$ as above with $\pi_*\rho_{X}^{\phi} = \phi$. Moreover, for any morphism $f \colon (V,\phi) \to (V',\phi')$ in $\cal{R}(X)$, we obtain a commutative diagram as follows
  \[
\begin{tikzcd}
	C^*(X) \arrow{d}[swap]{\rho_X^{\phi'}} \arrow{r}{\rho_X^{\phi}} & C^*(BV) \\
	C^*(BV') \arrow{ur}[swap]{C^*(Bf)},
\end{tikzcd}
\]
which realizes the commutative diagram
\[
\begin{tikzcd}
	H^*(X) \arrow{d}[swap]{{\phi'}} \arrow{r}{{\phi}} & H^*(V) \\
	H^*(V') \arrow{ur}[swap]{H^*(Bf)}
\end{tikzcd}
\]
on homotopy. 
\end{lem}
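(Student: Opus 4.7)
The plan is essentially a packaging of Lannes' Theorem~\ref{thm:lannespi0comp} applied to both objects and morphisms of $\mathcal{R}(X)$; no new input is needed beyond material already recorded in Subsection~5.1.

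For the first assertion, I would argue as in the paragraph preceding the statement: since $H^*(X)$ is Noetherian, and in particular of finite type, Lannes' theorem provides a bijection
\[
[BV, X^\wedge_p] \xrightarrow{\;\sim\;} \Hom_{\mathcal{K}}(H^*(X), H^*(BV)).
\]
Given $(V,\phi) \in \mathcal{R}(X)$, this bijection produces a unique homotopy class $\widetilde{\phi}\colon BV \to X^\wedge_p$ with $H^*(\widetilde{\phi}) = \phi$. Applying the cochains functor $C^*(-)$ yields a morphism $C^*(X^\wedge_p) \to C^*(BV)$ of commutative ring spectra, and composing with the canonical equivalence $C^*(X) \xrightarrow{\sim} C^*(X^\wedge_p)$ (available because $X$ is $p$-good) defines $\rho_X^{\phi}$. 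By construction $\pi_*\rho_X^{\phi} = H^*(\widetilde\phi) = \phi$.

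For the second assertion, let $f\colon (V,\phi) \to (V',\phi')$ be a morphism in $\mathcal{R}(X)$, i.e.\ a monomorphism $f\colon V \hookrightarrow V'$ with $H^*(Bf) \circ \phi' = \phi$. Both $\rho_X^{\phi}$ and $C^*(Bf) \circ \rho_X^{\phi'}$ are morphisms $C^*(X) \to C^*(BV)$ realizing maps in $[BV, X^\wedge_p]$: the first is realized by $\widetilde{\phi}$, and the second by the composite $Bf \circ \widetilde{\phi'}$. I would then compute
\[
H^*(Bf \circ \widetilde{\phi'}) \;=\; H^*(Bf)\circ H^*(\widetilde{\phi'}) \;=\; H^*(Bf)\circ \phi' \;=\; \phi \;=\; H^*(\widetilde{\phi}),
\]
using the defining property of $f$ as a morphism of $\mathcal{R}(X)$. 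The injectivity of Lannes' bijection then forces $Bf \circ \widetilde{\phi'} \simeq \widetilde{\phi}$ in $[BV, X^\wedge_p]$, and applying $C^*(-)$ gives $C^*(Bf) \circ \rho_X^{\phi'} \simeq \rho_X^{\phi}$, which is the desired commutativity. The second commutative diagram on the level of mod $p$ cohomology is immediate from $\pi_*\rho_X^{(-)} = (-)$ together with the defining relation $H^*(Bf) \circ \phi' = \phi$.

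There is no genuine obstacle here; the only subtle point is that Lannes' theorem controls homotopy classes of maps of spaces rather than morphisms of commutative ring spectra on the nose, but this is sufficient for the stated lemma since all diagrams in the statement are diagrams in the relevant homotopy category.
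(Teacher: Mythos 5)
Your proposal is correct and follows essentially the same route as the paper: construct $\widetilde\phi$ via Lannes' bijection $[BV,X^\wedge_p]\cong\Hom_{\cK}(H^*(X),H^*(BV))$, apply cochains using $p$-goodness, and use the injectivity (uniqueness) part of the bijection to force $\widetilde{\phi'}\circ Bf\simeq\widetilde\phi$, whence the cochain diagram commutes. The only nitpick is the ordering of the space-level composite (it is $\widetilde{\phi'}\circ Bf$, as your cohomology computation implicitly confirms), which does not affect the argument.
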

\begin{proof}
Everything but the claim that we can realize the commutative diagram on homotopy by maps of cochains has been shown. This follows because the maps produced by applying \Cref{thm:lannespi0comp} are unique up to homotopy; since $\phi \cong H^*(Bf)\circ \phi'$, there is a unique map $BV \to X^{\wedge}_p$ realizing $\phi \cong H^*(Bf)\circ \phi'$ on cohomology which factors through $Bf \colon BV \to BV'$. Taking cochains, we deduce that $C^*(Bf) \circ \rho_X^{\phi'} \simeq \rho_X^{\phi}$. 
\end{proof}
 Let $\cE(X)$ be a set of representatives of isomorphism classes of objects in $\cR(X)$, i.e., $\cE(X) = \cE(H^*(X))$. We thus obtain a map of commutative ring spectra
\begin{equation}\label{eq:chouinardmap}
\xymatrix{\rho_X := \prod_{(E,\phi)\in\cE(X)}\rho_{X}^{\phi} \colon C^*(X)\ar[r] & \prod_{(E,\phi)\in \cE(X)} C^*(BE)}
\end{equation}
for any $p$-good connected topological space $X$. This will be our candidate for descending stratification.

\subsection{Quillen lifting for cochains of spaces}
We now show that the morphism $\rho_X$ constructed in the previous section always satisfies Quillen lifting. We continue to assume that $X$ is a $p$-good connected topological space with Noetherian mod $p$ cohomology. 

\begin{prop}\label{prop:autoql}
The morphism $\rho_X\colon C^*(X) \to \prod_{(E,\phi) \in \cE(X)} C^*(BE)$ defined in \eqref{eq:chouinardmap} satisfies simple Quillen lifting. 
\end{prop}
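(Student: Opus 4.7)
The plan is to establish the support identity in the definition of simple Quillen lifting by tracking the fibers of $\res_{\rho_X}$ through Rector's decomposition, with the cosupport identity following by a parallel argument using coinduction $(\rho_X^\phi)^!$ and the cosupport half of \Cref{thm:stratcompactLie}. Writing $S = \prod_{(E,\phi)\in\cE(X)} C^*(BE)$, the spectrum $\Spec^h(\pi_*S)$ decomposes as $\coprod_{(E,\phi)} \cV_E$ and the support of $\rho_X^*M$ splits accordingly. The claim to be shown is then: for each pair $(E,\phi), (E',\phi') \in \cE(X)$, every $\fq \in \cV_E$, and every $\fq' \in \supp_{C^*(BE')}((\rho_X^{\phi'})^*M)$ with $\res_\phi(\fq) = \res_{\phi'}(\fq') =: \fp$, one has $\fq \in \supp_{C^*(BE)}((\rho_X^\phi)^*M)$.

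I would first use Rector's decomposition (\Cref{thm:rectorstrong}(1)) to single out the unique $(E_0, \phi_0) \in \cE(X)$ with $\fp \in \cV^+_{\phi_0, E_0}$. Disjointness forces any prime $\fq' \in \cV_{E'}$ lying over $\fp$ to come, via an iterative descent, from a prime $\fq'' \in \cV_{E_0''}^+$ for some subgroup $E_0'' \leq E'$, and it further forces the Rector object $(E_0'', \phi' \circ H^*(B\iota))$ to be isomorphic in $\cR(X)$ to the chosen representative $(E_0, \phi_0)$. Composing the subgroup inclusion with this isomorphism yields a morphism $\alpha\colon (E_0,\phi_0) \to (E',\phi')$ in $\cR(X)$ together with $\fq_0' \in \cV_{E_0}^+$ such that $\res_\alpha(\fq_0') = \fq'$. \Cref{prop:lift} supplies the identity $\rho_X^{\phi_0} = C^*(B\alpha) \circ \rho_X^{\phi'}$, and the Avrunin--Scott identity provided by \Cref{thm:stratcompactLie} then gives
\[
\supp_{C^*(BE_0)}((\rho_X^{\phi_0})^*M) = \res_\alpha^{-1}\supp_{C^*(BE')}((\rho_X^{\phi'})^*M),
\]
whence $\fq_0' \in \supp_{C^*(BE_0)}((\rho_X^{\phi_0})^*M)$.

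The next step is to spread this membership over the entire fiber $\res_{\phi_0}^{-1}(\fp)$. Applying \Cref{prop:lift} to each $w \in W_{\phi_0}$ yields $\rho_X^{\phi_0} = C^*(Bw) \circ \rho_X^{\phi_0}$, and a second use of \Cref{thm:stratcompactLie} shows that $\supp_{C^*(BE_0)}((\rho_X^{\phi_0})^*M)$ is invariant under the induced action of $W_{\phi_0}$ on $\cV_{E_0}$. Since $\fp \in \cV^+_{\phi_0, E_0}$, another application of the disjointness in \Cref{thm:rectorstrong}(1) forces $\res_{\phi_0}^{-1}(\fp) \subseteq \cV_{E_0}^+$, and \Cref{thm:rectorstrong}(2) identifies this fiber as a single $W_{\phi_0}$-orbit; combined, the whole fiber $\res_{\phi_0}^{-1}(\fp)$ is contained in $\supp_{C^*(BE_0)}((\rho_X^{\phi_0})^*M)$.

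To finish, I would rerun the construction of the second paragraph with $(E, \phi)$ in place of $(E', \phi')$ to obtain a morphism $\beta\colon (E_0,\phi_0) \to (E,\phi)$ in $\cR(X)$ and some $\fq_0 \in \res_{\phi_0}^{-1}(\fp)$ with $\res_\beta(\fq_0) = \fq$. By the previous step, $\fq_0 \in \supp_{C^*(BE_0)}((\rho_X^{\phi_0})^*M)$, and a final invocation of \Cref{prop:lift} together with the Avrunin--Scott identity for $C^*(B\beta)$ promotes this to $\fq \in \supp_{C^*(BE)}((\rho_X^\phi)^*M)$, completing the support half; the cosupport half proceeds verbatim, with coinduction in place of induction. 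I expect the principal bookkeeping obstacle to lie in the construction of the Rector morphisms $\alpha$ and $\beta$: extracting them from arbitrary primes $\fq'$ and $\fq$ requires the iterative descent to a minimal subgroup whose variety contains the prime, followed by the use of the disjointness half of \Cref{thm:rectorstrong}(1) to identify the resulting Rector object with the chosen representative $(E_0,\phi_0) \in \cE(X)$.
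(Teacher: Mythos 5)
Your proposal is correct and follows essentially the same route as the paper's proof: both use Rector's decomposition to locate the object $(E_0,\phi_0)$ in which $\fp$ originates, descend an arbitrary prime over $\fp$ to a prime in $\cV_{E_0}^+$ via iterated Rector monomorphisms realized on cochains by \Cref{prop:lift}, transport support membership with the Avrunin--Scott identities of \Cref{thm:stratcompactLie}, and use the transitivity of the automorphism action from \Cref{thm:rectorstrong}(2) to cover the whole set of origin primes before pushing forward to the given prime. The only difference is organizational: you fix the originating object and work with the full fiber $\res_{\phi_0}^{-1}(\fp)$ up front, whereas the paper phrases the middle step as the claim ($\clubsuit$) that every origin prime of $\fp$ lies in the support.
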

\begin{proof}
In order to prepare for the proof, we start by introducing some notation. Let $\cV_X = \Spec^h(H^*(X))$ and $\cV_{\cE(X)} = \Spec^h(\prod_{E \in \cE(X)}H^*(BE))$, and write $\rho = \rho_X$. By \Cref{thm:rectorstrong} there is a commutative diagram
\[
\xymatrix{\cV_{\cE(X)} \ar[r]^-{\res_{\rho}} & \cV_{X} \\
\coprod_{(E,\phi) \in \cE(X)}\cV_{E} \ar[r] \ar[u]^{\cong} & \coprod_{(E,\phi) \in \cE(X)}\cV_{\phi,E}^+. \ar[u]^{\fS}_{=} }
\]
Let $\fp \in \cV_X$ be a prime ideal. Since $\fS$ is a bijection, we may choose a pair $(E,\phi) \in \cE(X)$ in which $\fp$ originates: this means that there exists a (not necessarily unique) prime ideal $\cO(\fp) \in \cV_{E}^+ \subseteq \cV_{E}$ with $\fS([\cO(\fp)]) = \fp$, where $[\cO(\fp)]$ denotes the image of $\cO(\fp)$ under the quotient map $\cV_E^+ \to \cV_{\phi,E}^+$ of \Cref{thm:rectorstrong}(2). Since $\Mod_{C^*(BE)}$ is stratified for all elementary abelian groups $E$ by \cref{thm:stratcompactLie}, it remains to check condition ($\clubsuit$) of \cref{ssec:criterionsql} for this choice of $\cO$.

To this end, let $(E',\phi') \in \cE(X)$ and $\fq \in \cV_{E'}$ be a prime ideal such that $\res_{\rho_X^{\phi}}(\fq) = \fp$. By \cref{thm:rectorstrong}, there exists a monomorphism $\zeta\colon (E,\phi) \to (E',\phi')$ in $\cR(X)$ such that $\fq$ is the image of $\cO(\fp)$ under the associated restriction map $\res_{\zeta}\colon \cal{V}_{E} \to \cal{V}_{E'}$. The map $\zeta$ fits into a commutative diagram of unstable algebras over the Steenrod algebra, as displayed on the left, 
\[
\xymatrix{H^*(X) \ar[r]^-{H^*(\phi)} \ar[rd]_-{H^*(\phi')} & H^*(BE) \ar@{<-}[d]^{H^*(\zeta)} & C^*(X) \ar[r]^-{C^*(\phi)} \ar[rd]_-{C^*(\phi')} & C^*(BE) \ar@{<-}[d]^{C^*(\zeta)} \\ 
& H^*(BE')  & & C^*(BE'),}
\]
which by virtue of \Cref{prop:lift} lifts to the commutative diagram on the right. This verifies that $\cO$ is a suitable section to $\res_{\rho}$ satisfying condition ($\clubsuit$), as desired.
\end{proof}

\begin{rem}
One might wonder if \cref{cor:simplequillenliftingk1x} is a special case of this proposition. This is not readily the case, because we do not know whether $X = BG$ for $G \in \cal{K}_1\cal{X}$ is $p$-good. However, this proposition does apply to $X = BG$ for $G$ a compact Lie group to show that $C^*(BG) \to \prod_{E\in\cE(G)}C^*(BE)$ satisfies simple Quillen lifting. 
\end{rem}

\subsection{Chouinard's condition for cochains on spaces}\label{ssec:chouinardspaces}

\begin{defn}
A $p$-good connected topological space $X$ with Noetherian mod $p$ cohomology is said to satisfy Chouinard's condition if the map $\rho_X$ of \eqref{eq:chouinardmap} is biconservative, i.e., if induction and coinduction along $\rho_X$ are conservative.
\end{defn}
\begin{rem}
\Cref{prop:reschouinard} exhibits a necessary condition for $\rho_X$ to be conservative: $\res_{\rho_X}$ needs to be surjective. This is indeed the case and follows from the decomposition of the variety $\cal{V}_{H^*(X)}$ in \Cref{thm:rectorstrong}(1) and (2). 
\end{rem}

\begin{thm}\label{thm:rectorstrat}
Let $X$ be a $p$-good connected space with Noetherian mod $p$ cohomology, then $\Mod_{C^*(X)}$ is stratified if and only if $X$ satisfies Chouinard's condition.
\end{thm}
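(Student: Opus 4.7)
The plan is to apply the descent criterion \Cref{thm:stratification} to the morphism
\[
\rho_X\colon C^*(X) \to \prod_{(E,\phi)\in\cE(X)} C^*(BE)
\]
constructed in \eqref{eq:chouinardmap}. This requires three ingredients: stratification of the target, biconservativity of $\rho_X$, and Quillen lifting for $\rho_X$. For the target, Rector's theorem ensures that $\cR(H^*(X))$, and hence $\cE(X)$, is equivalent to a finite category; thus \Cref{rem:products} reduces stratification of $\prod \Mod_{C^*(BE)}$ to stratification of each factor $\Mod_{C^*(BE)}$, which is \Cref{thm:stratcompactLie}. Quillen lifting for $\rho_X$ is \Cref{prop:autoql} combined with \Cref{lem:simplequillenlifting}.

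The $(\Leftarrow)$ implication is then immediate: assuming Chouinard's condition, $\rho_X$ is biconservative by definition, so all hypotheses of \Cref{thm:stratification} are met and $\Mod_{C^*(X)}$ is stratified.

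For the $(\Rightarrow)$ implication, assume $\Mod_{C^*(X)}$ is stratified. By \Cref{prop:reschouinard}, biconservativity of $\rho_X$ reduces to surjectivity of the induced restriction map on homogeneous spectra. This surjectivity is automatic from Rector's decomposition \Cref{thm:rectorstrong}(1): every $\fp \in \Spec^h(H^*(X))$ lies in some stratum $\cV^+_{\phi,E} = \res_\phi(\cV^+_E) \subseteq \res_\phi(\cV_E)$, and is thus the image under $\res_{\rho_X}$ of a prime ideal from the factor indexed by $(E,\phi)$.

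The real obstacle behind the theorem has already been overcome in \Cref{prop:autoql}, whose proof combines Rector's $F$-isomorphism theorem, the realization of unstable algebra maps by maps of spaces via Lannes' theorem and \Cref{prop:lift}, and the Avrunin--Scott identities for elementary abelian $p$-groups in order to match supports and cosupports across the monomorphisms appearing in $\cR(X)$. Given that input together with the general descent theorem of \Cref{sec:quillen}, the present theorem is a clean formal consequence.
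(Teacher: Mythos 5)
Your proposal is correct and follows essentially the same route as the paper: the ``if'' direction descends stratification along $\rho_X$ via \Cref{thm:stratification} using \Cref{prop:autoql}, \Cref{rem:products}, and \Cref{thm:stratcompactLie}, while the ``only if'' direction combines \Cref{prop:reschouinard} with the surjectivity of $\res_{\rho_X}$ supplied by Rector's decomposition in \Cref{thm:rectorstrong}. Your write-up merely makes the surjectivity step and the finiteness of $\cE(X)$ more explicit than the paper does.
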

\begin{proof}
Suppose that $\rho_X$ is biconservative. \Cref{rem:products,thm:stratcompactLie} show that the category $\Mod_{\prod_{(E,\phi) \in \cR(X)} C^*(BE)} \simeq \prod_{(E,\phi) \in \cR(X)} \Mod_{C^*(BE)}$ is stratified. Since Quillen lifting for $\rho_X$ has been proven in \Cref{prop:autoql}, we can descend stratification along $\rho_X$ by \Cref{thm:stratification}, so $\Mod_{C^*(X)}$ is stratified.

The \emph{only if} direction follows from \Cref{prop:reschouinard} and Rector's strong form of Quillen stratification given in \Cref{thm:rectorstrong}. 
\end{proof}

\begin{ex}\label{ex:Chouinardfinitecomplex}
Let $X$ be a connected finite complex. Then Rector's category consists only of the trivial subgroup: indeed, for any elementary abelian $p$-group $V$ and any map $f\colon BV\to X$, Lannes' theory says that $H^*(f) \colon H^*(X) \to H^*(BV)$ is trivial. This also follows from Miller's solution of the Sullivan conjecture \cite{miller_sullivan}, since any map $BV \to X$ is nullhomotopic. The map $H^*(f)$ is thus finite if and only if $V=\{e\}$, the trivial group. Moreover, the canonical map $C^*(X) \to \F_p$ is biconservative. This follows from the same argument in \Cref{lem:chouinardfiber}: because $H^*(X)$ is finite, we have $C^*(X) \in \Thick_{C^*(X)}(\F_p)$, and so \Cref{lem:chouinardloc} applies. We deduce that $C^*(X)$ is stratified.
\end{ex}

\begin{rem}
	More generally, suppose $X$ is a connected finite CW-complex, and $R$ a Noetherian commutative ring spectrum so that $\pi_*C^*(X;R)$ is Noetherian. There is a splitting
\[
\xymatrix{R \ar[r] &  C^*(X;R) \ar[r]^-s & R}
\]
of $R$-modules, where the morphism $s$ is given by evaluating at a basepoint $\ast \in X$. We have that $C^*(X;R) \in \Thick_{C^*(X;R)}(R)$ by induction on the number of cells of $X$. It follows that $s$ is biconservative. Moreover, the splitting implies that $\pi_*s\colon \pi_*C^*(X;R) \to \pi_*R$ is surjective, and hence simple Quillen lifting is satisfied by \Cref{lem:simplequillenlifting}. It follows from \Cref{thm:stratification} that $\Mod_{C^*(X;R)}$ is stratified. 
\end{rem}

\subsection{Chouinard's condition for $H$-spaces with Noetherian mod $p$ cohomology}

Finally, we verify that $H$-spaces with Noetherian mod $p$ cohomology satisfy Chouinard's condition. In particular, this demonstrates that the stratification of $H$-spaces with Noetherian mod $p$ cohomology (\Cref{thm:hnoetherian}) fits into the more general framework of this section. 

\begin{thm}\label{thm:nhchouinard}
Let $X$ be an $H$-space with Noetherian mod $p$ cohomology, then $X$ satisfies Chouinard's condition, i.e., the map $\rho_X$ constructed in \eqref{eq:chouinardmap} is biconservative. 
\end{thm}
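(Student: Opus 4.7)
The plan is to reduce to the case of a compact abelian Lie group via the deconstruction of $H$-spaces due to Broto--Crespo--Saumell and Castellana--Crespo--Scherer. Concretely, I would work with the nilpotent fibration $BA \to X^{\wedge}_p \to Y$ of $H$-spaces and $H$-maps obtained by $p$-completing the map $X \to P_{B\Z/p}X$, where $Y = (P_{B\Z/p}X)^{\wedge}_p$ has finite mod $p$ cohomology and $A$ is an abelian discrete $p$-toral group, so that at the prime $p$ the space $BA$ is equivalent to the classifying space of a compact abelian Lie group.

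Let $i \colon C^*(X) \to C^*(BA)$ denote the induced morphism. First I would establish two key properties of $i$: it is biconservative by \Cref{lem:chouinardfiber}, since the fibration is of Eilenberg--Moore type with $H^*(Y)$ finite; and $i_{*} \colon H^*(X) \to H^*(BA)$ is an $F$-isomorphism by \Cref{lem:fisomorphism}, since the fibration is simple. Since $H^*(X)$ and $H^*(BA)$ are Noetherian connected graded $\F_p$-algebras, hence finitely generated as $\F_p$-algebras, this $F$-isomorphism upgrades to the assertion that $H^*(BA)$ is a finitely generated $H^*(X)$-module: picking $\F_p$-algebra generators $s_1,\dots,s_n$ of $H^*(BA)$ and a uniform exponent $k$ so that $s_i^{p^{k}} \in \im(i_*)$ for all $i$, the $H^*(X)$-module $H^*(BA)$ is spanned by the finitely many monomials in the $s_i$ of degree $< p^k$ in each variable.

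The crux of the argument will be comparing $\rho_X$ to the composite $\rho_{BA} \circ i$, which is biconservative by transitivity (\Cref{lemma:chouinard-transitivity}) combined with Chouinard's condition for compact abelian Lie groups (\Cref{ex:ChouinardcompactLie}). For each $(E,\psi) \in \cE(BA)$, Lannes' theorem identifies $\psi$ with a group homomorphism $\alpha \colon E \to A$, which factors as $E \xrightarrow{q} E_0 \xrightarrow{\iota} A$ with $E_0 := \alpha(E)$ a finite elementary abelian $p$-subgroup of $A$. Then $\phi_0 := H^*(B\iota) \circ i_* \colon H^*(X) \to H^*(BE_0)$ is finite: since $q$ admits a section, $H^*(Bq)$ is injective and $H^*(BE_0)$ is realized as an $H^*(BA)$-submodule of the finitely generated $H^*(BA)$-module $H^*(BE)$, hence itself finitely generated by Noetherianity, and $H^*(BA)$ is finitely generated over $H^*(X)$ by the previous step. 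Thus $(E_0,\phi_0)$ lies in $\cR(X)$ and is isomorphic to some $(\widetilde E_0,\widetilde\phi_0) \in \cE(X)$; the uniqueness of cochain lifts built into \Cref{prop:lift} then ensures that $\rho^{\psi}_{BA} \circ i \simeq h \circ \rho_X^{\widetilde\phi_0}$ for a suitable $h \colon C^*(B\widetilde E_0) \to C^*(BE)$.

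With this factorization in hand, if $\rho_X^*(M) \simeq 0$ for some $M \in \Mod_{C^*(X)}$, then $M \otimes_{C^*(X)} C^*(BE) \simeq M \otimes_{C^*(X)} C^*(B\widetilde E_0) \otimes_{C^*(B\widetilde E_0)} C^*(BE) \simeq 0$ for every $(E,\psi) \in \cE(BA)$, forcing $(\rho_{BA} \circ i)^*(M) \simeq 0$ and hence $M \simeq 0$. Coconservativity follows by the dual argument via the adjunction $\Hom_{C^*(X)}(C^*(BE),M) \simeq \Hom_{C^*(B\widetilde E_0)}(C^*(BE), \Hom_{C^*(X)}(C^*(B\widetilde E_0),M))$. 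The main obstacle is verifying the finiteness of $\phi_0$: it depends crucially both on the upgrade from $F$-isomorphism to finite ring extension (via the simplicity of the fibration) and on the realization of $\psi$ by a group homomorphism (via the compact abelian Lie structure of $A$), so any weakening of either hypothesis would require a different approach.
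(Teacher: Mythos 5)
Your proof is correct and follows the paper's overall skeleton --- deconstruct $X^\wedge_p$ via the $B\Z/p$-nullification fibration, obtain biconservativity of $C^*(i)\colon C^*(X)\to C^*(BA)$ from \Cref{lem:chouinardfiber} and the $F$-isomorphism $H^*(i)$ from \Cref{lem:fisomorphism}, and conclude by \Cref{lemma:chouinard-transitivity} together with the compact abelian Lie case --- but it diverges at the step comparing $\rho_X$ with $\rho_{BA}\circ C^*(i)$. The paper notes that $H^*(i)$ is a \emph{uniform} $F$-isomorphism between finitely generated algebras and invokes Rector's result that such a map induces an isomorphism of Rector categories $\cR(BA)\cong\cR(X)$ (\cite[Proposition 1.5]{rector_quillenstrat}); this makes $\rho_X$ literally equal to $\rho_{BA}\circ C^*(i)$ after identifying the indexing sets, so biconservativity is immediate. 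You instead prove the weaker but sufficient statement that each component $\rho^{\psi}_{BA}\circ C^*(i)$ factors up to homotopy through some component of $\rho_X$: you realize $\psi$ by a homomorphism $E\to A$, pass to its image $E_0$, and check by hand that $H^*(X)\to H^*(BE_0)$ is finite, using the integrality of the $F$-epimorphism $i_*$ and Noetherianity of $H^*(BA)$. What your route buys is independence from Rector's comparison of fundamental categories under $F$-isomorphisms; what it costs is the explicit finiteness and factorization bookkeeping, which the category isomorphism gives for free. All the steps you supply check out (the split injection $H^*(BE_0)\hookrightarrow H^*(BE)$ is indeed $H^*(BA)$-linear, and your appeal to $[BE,BA^\wedge_p]\cong\Hom(E,A)$ is the standard Lannes/Dwyer--Zabrodsky identification that the paper also uses implicitly when identifying $\cR(H^*(BG))$ with Quillen's category), so I regard your argument as a complete, slightly more hands-on variant of the paper's proof.
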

\begin{proof}
As in the proof of \Cref{thm:hnoetherian}, \cite{bcs_deconstructing} provides a nilpotent fiber sequence of Eilenberg--Moore type
\[
\xymatrix{K(P,1)^\wedge_p \ar[r]^-i & X^\wedge_p \ar[r]^-j & (P_{B\Z/p}X)_p^{\wedge},}
\]
in which $P$ is an abelian discrete $p$-toral group and $(P_{B\Z/p}X)_p^{\wedge}$ is a $B\mathbb Z/p$-null nilpotent space with finite mod $p$ cohomology. We recall that $C^*(K(P,1)^{\wedge}_p) \simeq C^*(BA)$ for $A$ an abelian compact Lie group. We may thus replace $K(P,1)$ by $BA$. The proof of \Cref{lem:fisomorphism} shows that the induced map $H^*(i)\colon H^*(X) \to H^*(BA)$ is an $F$-isomorphism. Because $H^*(X)$ and $H^*(BA)$ are both finitely generated $\F_p$-algebras, $H^*(i)$ is even a uniform $F$-isomorphism, see the remark after A.2 of \cite{Henn1998Unstable}, and hence $H^*(i)$ induces an isomorphism of Rector categories $\cR(BA) \cong \cR(X)$, by \cite[Proposition 1.5]{rector_quillenstrat}.  In particular, passage to isomorphism classes gives a bijection $\cE(BA) \cong \cE(X)$ induced by precomposing with $H^*(i)$.

Given $(E,\phi)\in \cE(BA)$, by \Cref{prop:lift}, there is a map $\rho^\phi_{BA}\colon C^*(BA)\to C^*(BE)$ such that $\pi_*(\rho^\phi_{BA})=\phi$. Then, for the corresponding object $(E,\phi\circ H^*(i))\in \cE(X)$, the map $\rho^\phi_{BA} \circ C^*(i)$  is a model for $\rho^{\phi\circ H^*(i)}_{BA}$, since $\pi_*(\rho^\phi_{BA} \circ C^*(i))=\phi \circ H^*(i)$. Therefore, the map $\rho_X$ defined in \Cref{eq:chouinardmap} can be taken to be $\rho_{BA}\circ C^*(i)$, i.e., the following diagram commutes:
\[
\xymatrix{C^*(X) \ar[r]^-{\rho_X} \ar[d]_-{C^*(i)} & \prod_{(E,\phi\circ H^*(i)) \in \cE(X)}C^*(BE) \ar@{=}[d]^-{ } \\
C^*(BA) \ar[r]_-{\rho_{BA}} & \prod_{(E,\phi) \in \cE(BA)}C^*(BE).}
\]
\Cref{lem:chouinardfiber} applies to show that $C^*(i)$ is biconservative, while $\rho_{BA}$ is biconservative by \cite[Theorem 3.1(i)]{bg_stratifyingcompactlie}, as observed in \Cref{ex:ChouinardcompactLie}. It follows from \Cref{lemma:chouinard-transitivity} that $\rho_X$ is biconservative as well. 
\end{proof}

\begin{proof}[Alternative proof of \Cref{thm:hnoetherian}]
Any $H$-space is simple and hence $p$-good, so by \Cref{thm:rectorstrat} it suffices to show that $X$ satisfies Chouinard's condition. This is the content of \Cref{thm:nhchouinard}; the claim follows.
\end{proof}

\begin{ex}
We return to the example of $X = S^3\langle 3 \rangle$ considered in \Cref{sec:s3cover}. We recall that $S^3\langle 3 \rangle$ fits into a principal fibration $BS^1 \xr{i} S^3 \langle 3 \rangle \xr{j} S^3$. Rector's category $\cal{R}(S^3\langle 3 \rangle)$ consists of a single non-trivial object, given by the inclusion $\iota\colon \Z/p \to  S^1$. Let $f \colon C^*(S^3 \langle 3) \rangle \to C^*(B\Z/p)$ denote the composite $C^*(B\iota) \circ C^*(i)$. This map is an $F$-isomorphism, and so $f$ satisfies simple Quillen lifting by \Cref{lem:simplequillenlifting}. As we have already seen, $C^*(i)$ is biconservative, and so is $C^*(B\iota)$ by \Cref{ex:ChouinardcompactLie}. It follows that $f$ and hence $\rho_{S^3\langle 3 \rangle}$ are biconservative, and we again deduce stratification of $C^*(S^3\langle 3 \rangle)$.  
\end{ex}

\biblio
\bibliography{duality}\bibliographystyle{alpha}
\end{document}